\title{Dissipation Properties of Transport Noise in the Two-layer Quasi-Geostrophic Model}
\author{%
  Giulia Carigi
  \and Eliseo Luongo\footnote{Corresponding author: eliseo.luongo@sns.it}
  }
\date{26/09/2022}
\begin{document}
\maketitle
\begin{abstract}
   A stochastic version of the two--layer quasi--geostrophic model (2LQG) with multiplicative transport noise is analysed. This popular intermediate complexity model describes large scale atmosphere and ocean dynamics at the mid-latitudes. The transport noise, which acts on both layers, accounts for the unresolved small scales. After establishing the well-posedness of the perturbed equations, we show that, under a suitable scaling of the noise, the solutions converge to the deterministic 2LQG model with enhanced dissipation. Moreover, these solutions converge to the deterministic stationary ones on the long time horizon.
   
\end{abstract}

\paragraph{Keywords:} SPDEs; Stochastic geophysical flow models, Transport noise.
\paragraph{AMS Subject Classification:} \textit{Primary:} 76U60, 60H15 \textit{Secondary:} 76M35, 86A10

\section{Introduction}
 
The quasi-geostrophic model, first mathematically described by Charney \cite{Charney48} in the first half of the XX century, is an approximation of the three dimensional Navier-Stokes equation in vorticity formulation which captures the large-scale phenomena of atmosphere and ocean dynamics.
Quasi-geostrophic models with several layers, like the one considered in this article, are particularly suitable to describe baroclinic instabilities, a crucial mechanism behind most common weather patters at the mid-latitudes.
In fact the quasi--geostrophic equations is a popular model in theoretical meteorology given the richness of phenomena it can describe (see e.g. \cite{Vallis06,Pedlosky13}). From a mathematical perspective, it can be seen as a model of intermediate complexity between two and three dimensional Navier--Stokes equation. The model has been shown to be well-posed and to exhibit a global attractor in its deterministic version (see \cite{Bernier94}) and in a stochastic version with additive noise \cite{Chueshov01_Proba,thesis}. In this paper a stochastically perturbed version of the two--layer quasi--geostrophic (2LQG) model with multiplicative noise of transport type on both layers is studied and shown to be well-posed.

The nature of this perturbation is crucial both from a physical interpretation of the results and for the novelty of mathematical tools it requires. 
Recent developments on the physical justification for transport noise in fluid dynamics models include \cite{Holm_2015}. There the transport noise is systematically introduced in models relevant for geophysical applications, including the quasi-geostrophic model, in such a way to retain conservation laws crucial for the description of fluids, in particular circulation. 
Recently \cite{FlandoliPappalettera2022} proposed a rigorous interpretation and justification of transport noise as additive noise on smaller scales. 
For more on the interpretation of transport noise in stochastic partial differential equation refer to \cite{FlandoliPappalettera2022} and references therein. 

From a mathematical perspective the literature on transport noise for fluid dynamics models is vast, especially for Euler and Navier-Stokes equation. Most relevant for this work are \cite{flandoli2022eddy,flandoli2021quantitative}. In these works the authors make use a suitable scaling, first introduced in \cite{galeati2020convergence}, to show that weak solutions of a stochastic Euler model converge to weak solutions of the deterministic Navier-Stokes equation, and provide a quantitative estimate of the rate of convergence. Similarly, for the 2LQG model we will see that the transport noise provides an enhancement of eddy dissipation effects. These results highlight the regularization action of the transport noise, which, despite being energy-preserving in general, acts as a dissipative force. On the relevance of transport noise for dissipation and mixing properties we refer as well to \cite{Flandoli_2021, dong2022dissipation}. 

Thanks to dissipation properties of the transport noise we will also show that the solution of the 2LQG model approaches the deterministic stationary solution for large times. The well-posedness of the stationary system associated to the 2LQG equations would require in general a large model viscosity, hard to justify from a physical perspective. However, we will show that an appropriate choice of the transport noise ensures stability on the long run of the solutions. This results extends those obtained for the heat equations in \cite{flandoli2022heat} to a nonlinear system like the quasi-geostrophic model here under analysis. We expect this approach to generalise to the two--dimensional Navier-Stokes equations and other models with similar structure. 

\subsection{The Model and the Main Results}\label{sec model}
The two-layer quasi-geostrophic model (2LQG) is 
one of the most used model to describe the motion of atmosphere as well as of the ocean at the mid-latitudes. In particular it captures the large scale dynamics of two layers of fluid of fixed height $h_1$, $h_2$ respectively and with density $\rho_1$ and $\rho_2$ with $\rho_1<\rho_2$. We consider the $\beta$-plane approximation to the Coriolis effect for which the Coriolis parameter $f_c$ can be expressed as 
\(
 f_c(y) = f_0 + \beta y,
\)
with $f_0$ and $\beta$ assigned positive constants. In the model we include the effect of eddy viscosity on both layers, of the bottom friction on the second layer, to account for the interaction with the Eckmann layer, and a deterministic additive forcing on the first layer, for example to account for the wind forcing on the upper ocean. In order to give a mathematical formulation of the model let us introduce a spatial domain $\cD$, squared domain $\cD = [0,L]\times [0,L]\subset \R^2$ (where $L$ will be e.g. $10^5$ m for the ocean). 

Consider the following equations for the variables $\bfpsi(t, \x) = (\psi_1(t, \x), \psi_2(t, \x))^t$, streamfunction of the fluid, and $\q(t,\x) = (q_1(t, \x), q_2(t, \x))^t $, the so-called quasi--geostrophic (QG) potential vorticity
\begin{align}
\label{eq:QG_deterministic}
\begin{split}
    dq_1 + \left(\nabla^{\perp}\psi_1 \cdot \nabla q_1\right) dt= \left(\nu\Delta^2 \psi_1 -\beta \partial_x \psi_1+ F(t) \right)dt \\
    dq_2 + \left(\nabla^{\perp}\psi_2 \cdot \nabla q_2\right) dt= \left(\nu\Delta^2 \psi_2 -\beta \partial_x \psi_2 - r\Delta \psi_2 \right)dt .
\end{split}
\end{align}
Here $\nu>0$ is the eddy viscosity parameter, $r>0$ accounts for the bottom friction and $F(t)$ is a deterministic forcing with zero spatial averages, namely 
\begin{equation}
\label{zero average}
    \int_{\cD} F(t, \x)\, d\x=0 \fa t\geq 0.
\end{equation}
The QG potential vorticities $\q$ and the streamfunction $\bfpsi$ are linked by the equations
\begin{equation}
\label{eq:relation_q_psi}
    \begin{split}
        q_1=\Delta \psi_1+S_1(\psi_2-\psi_1)\\
        q_2=\Delta \psi_2+S_2(\psi_1-\psi_2).
    \end{split}
\end{equation}
where $S_1, S_2$ are positive constants such that 
\begin{equation}
\label{eq:defS}
    h_1 S_1 = h_2 S_2 =:S.
\end{equation}

As stated in the introduction, the goal of the present work is to study the eddy dissipation properties of transport noise for this model. 
There are several motivations to consider transport noise, as the effect of small scales on large scales in fluid dynamical problems, see \cite{flaWas}, \cite{Holm_2015} for several discussion on this topic. Loosely speaking, small scale transport noise produces in the limit an extra dissipative term, which can be called eddy dissipation. 
There is an extended literature devoted to these kind of topics both in the endogenous and the exogenous case. See for example \cite{flandoli2021quantitative}, \cite{flandoli2022eddy}, \cite{flandoli2022heat}, \cite{dong2022dissipation}.


 Let us now introduce the stochastic perturbation we will consider. Let  $\left(\Omega,\mathcal{F},\mathcal{F}_t,\mathbb{P}\right)$ be a filtered probability space. Let $\{W^{j,k}_t\}_{k=1}^K,\ j\in\{1,\ 2\}$ be two sequences of real Brownian motions adapted to $\mathcal{F}_t$ and consider two corresponding sequences of divergence free vector fields $\{\bm{\sigma}_{j,k}\}_{k=1}^K\subseteq C^{\infty}(\mathcal{D};\mathbb{R}^2), \ j\in\{1,\ 2\}$. 
 In general, less can be required on the regularity of the coefficients $\bm{\sigma}_{j,k}$, but it is not our goal to stretch the boundaries of regularity of the noise as it is not critical to obtain the desired final result.
Now we can consider the following stochastically perturbed two-layer quasi-geostrophic model
\begin{align}
\label{eq:QG_strato}
\begin{split}
    dq_1 + \left(\nabla^{\perp}\psi_1 \cdot \nabla q_1\right)dt= \left(\nu\Delta^2 \psi_1 -\beta \partial_x \psi_1+ F \right)dt + \sum_{k=1}^K\left(\bm{\sigma}_{1,k} \circ dW^{1,k} \right)\cdot \nabla q_1 \\
    dq_2 + \left(\nabla^{\perp}\psi_2 \cdot \nabla q_2\right)dt= \left(\nu\Delta^2 \psi_2 -\beta \partial_x \psi_2 - r\Delta \psi_2 \right)dt + \sum_{k=1}^K\left(\bm{\sigma}_{2,k} \circ dW^{2,k}\right)\cdot \nabla q_2
\end{split}
\end{align}
which in It\^{o} formulation reads
\begin{align}
\label{eq:QG_ito}
\begin{split}
    dq_1 + \left(\nabla^{\perp}\psi_1\cdot \nabla q_1\right) dt = \left(\nu\Delta^2 \psi_1 - \beta \partial_x \psi_1 + F \right)\, dt + \sum_{k=1}^K\bm{\sigma}_{1,k}\cdot \nabla q_1\,  dW^{1,k}+\tfrac{1}{2}\sum_{k=1}^K\bm{\sigma}_{1,k} \cdot \nabla (\bm{\sigma}_{1,k}\cdot \nabla q_1)\, dt\\
    dq_2 + \left(\nabla^{\perp}\psi_2\cdot \nabla q_2\right) dt = \left(\nu\Delta^2 \psi_2 -\beta \partial_x \psi_2 - r\Delta \psi_2\right)\, dt + \sum_{k=1}^K\bm{\sigma}_{2,k}\cdot \nabla q_2 \, dW^2 +\tfrac{1}{2}\sum_{k=1}^K\bm{\sigma}_{2,k} \cdot \nabla (\bm{\sigma}_{2,k}\cdot \nabla q_2)\, dt.
    \end{split}
\end{align}
We assume periodic boundary conditions for the streamfunction $\bfpsi$ in both directions and that 
\begin{equation*}
    \int_{\cD} \bfpsi(t, \x) \, d\x  = 0.
\end{equation*}


Let us set some notation before stating the main contributions of this work. Let $(H^k(\mathcal{D}), \lVert\cdot\rVert_{H^k}), k\in\mathbb{R}$ be the standard Sobolev spaces of $L$-periodic functions satisfying condition \eqref{zero average}. We will denote by $\langle \cdot,\cdot\rangle_{H^k}$ the corresponding scalar products. With a slight abuse of notation, for $k>0$ we denote the dual pairing with 
\begin{equation*}
    \langle T, \varphi \rangle_{H^{-k},H^k} = T(\varphi) \fa T\in H^{-k},\, \varphi\in H^k.
\end{equation*}
In case of $k=0$ we will write $L^2(\mathcal{D})$ instead of $H^0(\mathcal{D})$ and we will neglect the subscript in the notation for the norm.
Similarly, we introduce the Sobolev spaces of zero mean vector fields
\begin{align*}
    \mathbf{H}^k= \{(u_1,u_2)^t:\ u_1,u_2\in H^k(\mathcal{D}) \},\ \langle \bfu,\bfv\rangle_{\mathbf{H}^k}=\langle u_1,v_1\rangle_{H^k}+\langle u_2,v_2\rangle_{H^k},\, \text{for } k\in \R.
\end{align*}
Again, in case of $k=0$ we will write $\mathbf{L}^2$ instead of $\mathbf{H}^0$ and we will neglect the subscript in the notation for the norm and the scalar product.
 Sometimes, on the space $\mathbf{H}^k$ we will consider the norms 
 \begin{equation}
     \vertiii{ \cdot}_{\mathbf{H}^k}^2= h_1 \lVert \cdot_1 \rVert_{H^k}^2+h_2 \lVert \cdot_2 \rVert_{H^k}^2 
 \end{equation}
 which are straightforwardly equivalent to the standard ones that we denote by $\lVert \cdot \rVert_{\mathbf{H}^k}$. Furthermore we define the following functional spaces 
\begin{align*}
    \cH=\mathbf{L}^2  ,\ \cV=\mathbf{H}^1,\ D\left( \mathbf{\Delta}\right)  =\mathbf{H}^2
\end{align*} 
where $\mathbf{\Delta}:D\left(  \mathbf{\Delta}\right)
\subset \cH\rightarrow \cH$ is defined by
\begin{align*}
    \mathbf{\Delta}(q_1,q_2)^t=(\Delta q_1,\Delta q_2)^t.
\end{align*}
 It is well known that $\mathbf{\Delta}$ is the infinitesimal generator of analytic semigroup of negative type and moreover $\cV$ can be identified with $D((-\mathbf{\Delta})^{1/2})$, see e.g. \cite{pazy2012semigroups}. 


A first issue related to the analysis of the dissipation properties of the transport noise is the well-posedness of such system. In fact the existence of strong probabilistic solution is outside the framework treated in earlier works, see for example \cite{thesis, cotter2020data, cotter2020modelling, Holm_2015}.
Thus, first, we need to define our notion of solution for system (\ref{eq:QG_ito}) and state our well-posedness result.

Let $Z$ be a separable Hilbert space, with associated norm $\| \cdot\|_{Z}$. For $p\geq 1$ denote by $L^p(\mathcal{F}_{t_0},Z)$ the space of $p$-integrable random variables with values in $Z$, measurable with respect to $\mathcal{F}_{t_0}$. Moreover, denote by $C_{\mathcal{F}}\left(  \left[  0,T\right]  ;Z\right) $ the space of continuous adapted processes $\left(  X_{t}\right)  _{t\in\left[
0,T\right]  }$ with values in $Z$ such that
\[
\E \left[  \sup_{t\in\left[  0,T\right]  }\left\Vert X_{t}\right\Vert
_{Z}^{2}\right]  <\infty
\]
and by $L_{\mathcal{F}}^{p}\left(  0,T;Z\right)$ the space of progressively
measurable processes $\left(  X_{t}\right)  _{t\in\left[  0,T\right]  }$ with
values in $Z$ such that
\[
    \E \left[  \int_{0}^{T}\left\Vert X_{t}\right\Vert _{Z}^{p}dt\right]
<\infty.
\]
Define for $j\in \{1,2\}$ the operators
\begin{align}
    F_{j}(q)&:=\frac{1}{2}\sum_{k=1}^K\bm{\sigma}_{j,k} \cdot \nabla  (\bm{\sigma}_{j,k}\cdot \nabla q), \label{eq:F_j(q)}\\
    G^k_{j}(q)&:=\bm{\sigma}_{j,k}\cdot \nabla q,\ \text{for } k\in \{1,\dots, K\}.\label{eq:Gkj(q)}
\end{align}
which can be easily shown to be bounded linear operators
$$ 
F_j\in \mathcal{L}(H^2(\mathcal{D}),L^2(\mathcal{D})), \quad G^k_j\in \mathcal{L}(H^{1}\left(\mathcal{D}\right);L^2(\mathcal{D})).$$ 
We then consider the following concept of weak solution (see e.g. \cite[Chapter~7]{DaPratoZab2014}) for \eqref{eq:QG_ito}:
\begin{definition}\label{weak solution qg}
    A stochastic process
    \[
    \mathbf{q}\in C_{\mathcal{F}}\left(  \left[  0,T\right] ;\cH\right)  \cap
    L_{\mathcal{F}}^{2}\left(  0,T; \cV \right)
    \]
    is a weak solution of equation \eqref{eq:QG_ito} if, for every $\bm{\phi}=(\phi_1,\phi_2)^t\in D(\mathbf{\Delta})  $, we have
    \begin{align*}
    \left\langle q_1(t)  ,\phi_1\right\rangle  &  =\left\langle
    q_{1,0},\phi_1\right\rangle +\int_{0}^{t}\left\langle \Delta \psi_1(s)
    , \nu\Delta  \phi_1\right\rangle ds+\int_0^t \langle q_1(s), F_1 \phi_1\rangle\ ds+\int_0^t \langle q_1(s), \nabla^{\perp}\psi_1(s)\cdot\nabla \phi_1\rangle ds \\
    &  +\int_{0}^{t}\left\langle \beta \partial_x\psi_1\left (s\right)  ,\phi_1\right\rangle
    ds +\int_{0}^{t}\left\langle F(s)  ,\phi_1\right\rangle
    ds-\sum_{k=1}^K\int_{0}^{t}\left\langle q_1(s)  ,G^k_1\phi_1\right\rangle dW_{s}^{1,k}\\ 
    \left\langle q_2(t)  ,\phi_2\right\rangle  &  =\left\langle
    q_{2,0},\phi_2\right\rangle +\int_{0}^{t}\left\langle \Delta \psi_2(s)
    , \nu\Delta  \phi_2\right\rangle ds+\int_0^t \langle q_2(s), F_2 \phi_2\rangle\ ds+\int_0^t \langle q_2(s), \nabla^{\perp}\psi_2(s)\cdot\nabla \phi_2\rangle ds \\
    &  +\int_{0}^{t}\left\langle \beta \partial_x\psi_2\left (s\right)  ,\phi_2\right\rangle
    ds -r\int_{0}^{t}\left\langle \Delta \psi_2(s)  ,\phi_2\right\rangle
    ds-\sum_{k=1}^K\int_{0}^{t}\left\langle q_2(s)  ,G^k_2\phi_2\right\rangle dW_{s}^{2,k}%
    \end{align*}
    for every $t\in\left[  0,T\right]  $, $\mathbb{P}$-a.s., where $\mathbf{q}$ and $\bfpsi$ are linked by relation \eqref{eq:relation_q_psi}.
\end{definition}
The well posedness of this system is guaranteed by the following result which we will prove in \autoref{sec well posed}.
\begin{theorem}\label{thm well psd qg vorticity}
    For every $\mathbf{q}_{0}\in L^4_{\mathcal{F}_0}(\Omega, \cH)$ and $F\in L^{4}_{\mathcal{F}}\left(  0,T;L^2(\mathcal{D})\right)  $, there exists one and only one weak solution of equation (\ref{eq:QG_ito}).
\end{theorem}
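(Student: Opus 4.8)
The plan is to build the solution by a Galerkin scheme, extract uniform a priori bounds that exploit two structural cancellations (antisymmetry of the advection and $L^2$-conservation of the transport noise), pass to the limit, and finally establish pathwise uniqueness; a probabilistically strong solution then follows by combining uniqueness with the approximating sequence through the Gy\"ongy--Krylov criterion. A preliminary observation is that relation \eqref{eq:relation_q_psi}, for $L$-periodic zero-mean fields, defines a bounded invertible map $\q\mapsto\bfpsi$ gaining two derivatives, so that $\lVert\bfpsi\rVert_{\mathbf{H}^{k+2}}\lesssim\lVert\q\rVert_{\mathbf{H}^k}$. In particular every term in \autoref{weak solution qg} is well defined for $\q\in\cV$, and it suffices to estimate $\q$. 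I would then let $\Pi_n$ be the orthogonal projection onto the first $n$ eigenfunctions of $-\mathbf{\Delta}$ and project \eqref{eq:QG_ito}: this yields a finite system of It\^o SDEs for $\q^n=\Pi_n\q$ with smooth coefficients (advection quadratic, noise linear), so standard theory gives a local solution, promoted to a global one by the bounds below.

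The a priori estimates are the core. Applying It\^o's formula to $\vertiii{\q^n(t)}_{\cH}^2$ at the Galerkin level (where taking the test function equal to $\q^n$ is legitimate), two cancellations occur. First, the advection drops out layer by layer since $\nabla^{\perp}\psi_j$ is divergence free, so that $\langle \nabla^{\perp}\psi^n_j\cdot\nabla q^n_j, q^n_j\rangle=0$. Second, the transport noise conserves the $L^2$ norm in the sense that the It\^o correction exactly compensates the quadratic variation, $2\langle q^n_j,F_j q^n_j\rangle+\sum_k\lVert G^k_j q^n_j\rVert^2=0$, again by skew-symmetry of $\bm{\sigma}_{j,k}\cdot\nabla(\cdot)$. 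What remains is the dissipation: rewriting the biharmonic term via \eqref{eq:relation_q_psi} produces a leading coercive contribution $-\nu\lVert\nabla\Delta\psi^n_j\rVert^2$ controlling $\lVert\q^n\rVert_{\cV}$, while the $\beta$-term, the bottom friction and the inter-layer coupling are of lower order and are absorbed. Taking expectations, using Burkholder--Davis--Gundy for the supremum and Gronwall, I obtain, uniformly in $n$,
\begin{equation*}
\E\sup_{t\in[0,T]}\vertiii{\q^n(t)}_{\cH}^2+\nu\,\E\int_0^T\lVert\q^n(t)\rVert_{\cV}^2\,dt\le C\Big(1+\E\vertiii{\q_0}_{\cH}^2+\E\int_0^T\lVert F(t)\rVert^2\,dt\Big).
\end{equation*}
Repeating the computation on $\vertiii{\q^n}_{\cH}^4$ and invoking the hypotheses $\q_0\in L^4_{\mathcal{F}_0}$ and $F\in L^4_{\mathcal{F}}$ yields the matching fourth-moment bound, which provides the equi-integrability needed to pass to the limit in the quadratic term.

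For existence I would combine these bounds with a uniform fractional-in-time estimate coming from the equation to deduce tightness of the laws of $(\q^n)$ in, say, $L^2(0,T;\cH)\cap C([0,T];\bH^{-2})$. A Skorokhod-type argument then produces a martingale solution; the only delicate point is the identification of the weak limit of the nonlinear term $\langle q^n_j,\nabla^{\perp}\psi^n_j\cdot\nabla\phi_j\rangle$, which needs the strong $L^2(0,T;\cH)$ convergence supplied by the compact embedding.

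Finally, uniqueness. Writing the equation for the difference $\bm{\rho}=\q-\bar{\q}$ (with $\bm{\eta}=\bfpsi-\bar{\bfpsi}$) and applying It\^o to $\lVert\bm{\rho}(t)\rVert^2$, the two noise contributions vanish exactly as above, so the martingale part disappears and the inequality becomes \emph{pathwise}. The advection difference splits as $\nabla^{\perp}\eta_j\cdot\nabla q_j+\nabla^{\perp}\bar\psi_j\cdot\nabla\rho_j$; the second piece is annihilated by skew-symmetry, while the first is the genuine obstacle. Using the elliptic gain $\lVert\bm{\eta}\rVert_{\mathbf{H}^2}\lesssim\lVert\bm{\rho}\rVert$ together with the two-dimensional Ladyzhenskaya inequality $\lVert\rho_j\rVert_{L^4}\lesssim\lVert\rho_j\rVert^{1/2}\lVert\nabla\rho_j\rVert^{1/2}$, I would bound this term by $\tfrac{\nu}{2}\lVert\bm{\rho}\rVert_{\cV}^2+C\lVert\nabla\q\rVert^{4/3}\lVert\bm{\rho}\rVert^2$, absorbing the gradient into the dissipation and arriving at
\begin{equation*}
\frac{d}{dt}\lVert\bm{\rho}(t)\rVert^2\le C\,\lVert\nabla\q(t)\rVert^{4/3}\,\lVert\bm{\rho}(t)\rVert^2 .
\end{equation*}
Since $\q\in L^2(0,T;\cV)$ makes $t\mapsto\lVert\nabla\q(t)\rVert^{4/3}$ integrable $\bP$-a.s., Gronwall forces $\bm{\rho}\equiv0$. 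This trilinear control is exactly the hard part of the proof: it is the only place where the quadratic nonlinearity is not eliminated by antisymmetry, and it is what dictates the $\cV$-regularity in which the solution must be sought. Pathwise uniqueness together with the tight Galerkin sequence then gives, via Gy\"ongy--Krylov, convergence in probability of $\q^n$ to the unique weak solution on the original probability space.
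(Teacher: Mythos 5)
Your proposal is sound, and its skeleton --- Galerkin scheme, energy estimates exploiting the two structural cancellations (antisymmetry of advection, and the exact compensation of the It\^{o} correction by the quadratic variation of the transport noise), $L^4$ moment bounds, and pathwise uniqueness via It\^{o} on the difference combined with the elliptic gain $\lVert\bm{\eta}\rVert_{\mathbf{H}^2}\lesssim\lVert\bm{\rho}\rVert$ and Ladyzhenskaya --- coincides with the paper's (Lemma~\ref{stime energia} and the uniqueness theorem in Section~\ref{subsec:existence}; the paper absorbs the random Gronwall coefficient with an exponential weight $R(t)=\exp(-\eta\int_0^t\vertiii{\nabla\q(s)}^2ds)$ and ends up with $\lVert\nabla\q\rVert^2$ where you get $\lVert\nabla\q\rVert^{4/3}$, but both are time-integrable along weak solutions, so the two uniqueness arguments are equivalent). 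Where you genuinely diverge is the passage to the limit. You propose tightness in $L^2(0,T;\cH)\cap C([0,T];\mathbf{H}^{-2})$, Skorokhod representation, identification of a martingale solution, and then Gy\"{o}ngy--Krylov to return to the original probability space. The paper never leaves the original space: it extracts weak and $*$-weak limits of the Galerkin sequence directly, and identifies the unknown weak limits $B_j^*$ of the nonlinear terms by a Breckner-type argument (Lemma~\ref{crucial} and Lemma~\ref{lemma:nonlinearity}), namely strong convergence in $L^2(\Omega;L^2(0,T;\cH))$ of the Galerkin approximations stopped at times $\tau_M$, obtained with the same exponential-weight device used for uniqueness, followed by an upgrade to the full sequence via Lemma~\ref{lemma:convergence}. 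Your route relies on more standard, robust machinery (and would tolerate rougher noise coefficients), but it requires the extra fractional-in-time estimate for tightness and the final Gy\"{o}ngy--Krylov step; the paper's route avoids compactness and any change of probability space altogether, at the price of the stopped-convergence lemma, which is its real technical core. One small simplification you missed: since $\langle q_j^N,\bm{\sigma}_{j,k}\cdot\nabla q_j^N\rangle=0$, the martingale part of $d\vertiii{\q^N}^2$ vanishes identically, so the a priori bounds are pathwise and no Burkholder--Davis--Gundy inequality is needed --- this is precisely what the paper exploits to keep all estimates deterministic in form.
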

\begin{remark}
    We stated \autoref{thm well psd qg vorticity} in full generality in order to provide a complete framework for the well posedness of such stochastic system, that, to the best of these authors knowledge, is unavailable in the literature. This result is redundant for the scope of this work though. In fact, in order to exploit the eddy dissipation properties of the transport noise we will consider deterministic initial conditions and a deterministic and time-independent forcing.
\end{remark}

\begin{remark}
It is well known that, in absence of external forcing or dissipation, the quasi-geostrophic model has an infinite number of preserved quantities. The noise introduced in \eqref{eq:QG_strato} will preserve the $L^2$-norm of the quasi-geostrophic potential vorticity $\q$, namely the potential enstrophy of the system. For more on the role of enstrophy in the two-layer quasi-geostrophic dynamics see e.g. \cite{Lucarinietal2014}, \cite[Section~5.6.3, Section~9.2.2]{Vallis06}.
\end{remark}
After showing the well posedness of system (\ref{eq:QG_ito}), our goal is to provide sufficient conditions in order to model the dissipation properties of the transport noise. Let us explain briefly the adopted strategy before going into the details. 
First, following the approach introduced in \cite{flandoli2021quantitative}, we will show that under an appropriate scaling of the noise, first introduced by Galeati in \cite{galeati2020convergence}, the solution of the stochastic system \eqref{eq:QG_strato} converges, in a suitable sense which we will clarify later, to the solution of the associated deterministic system with an extra diffusion. 

Second, for time-independent forcings $F$ and same properties of the noise, solutions of the stochastic system \eqref{eq:QG_strato} will converge to those of the associated stationary deterministic 2LQG model.

In order to precisely formulate these statements, let us introduce some notations and the precise formulation of the noise. Let $a,b$ be two positive numbers, then we write $a\lesssim b$ if there exists a positive constant $C$ such that $a \leq C b$ and $a\lesssim_\alpha b$ when we want to highlight the dependence of the constant $C$ on a parameter $\alpha$.
Let $e_k(\mathbf{x})=L^{-1}\exp{\frac{2\pi i}{L} k\cdot \x}$, $ k\in \mathbb{Z}^2_0=\mathbb{Z}^2\setminus\{(0,0)\}$, orthonormal basis of $L^2(\mathcal{D})$ made by eigenfunctions of $-\Delta$. 

Following the approach first introduced in \cite{galeati2020convergence} consider the following explicit representations of the coefficients $\bm{\sigma}_{j,k},\ k\in \mathbb{Z}^2_0,\ j\in \{1,2\}$
\begin{equation}
    \bm{\sigma}_{j,k}(\x)=\sqrt{2\kappa}\bm{a}_{j,k} e_k(\x)=
    \begin{cases}\sqrt{2\kappa}\theta_{j,k} e_k(\x)\frac{k^{\perp}}{\lvert k\rvert}\ \textit{ if } k\in \mathbb{Z}_+^2\\
    \sqrt{2\kappa}\theta_{j,k} e_k(\x)\frac{-k^{\perp}}{\lvert k\rvert}\ \textit{ if }  k\in \mathbb{Z}_-^2,
    \end{cases} 
\end{equation}
where $\mathbb{Z}^2_+,\ \mathbb{Z}^2_-$ is a partition of $\mathbb{Z}^2_0$ with $\mathbb{Z}^2_+=-\mathbb{Z}^2_-$, and the parameters $\theta_{j,k}$ satisfy the following conditions:
\begin{enumerate}
    \item $\sum_{k\in \mathbb{Z}^2_0}\theta_{j,k}^2=1 $.
    \item $\theta_{j,k}=0$ if $\lvert k\rvert$ is large enough. We will denote by $K$ the finite set of $k$ where $\theta_{j,k}\neq 0$.
    \item $\theta_{j,k}=\theta_{j,l}$ if $\lvert k\rvert=\lvert l\rvert$.
\end{enumerate}
Furthermore take an infinite sequence of complex standard Brownian motions such that $\overline{W^{j,k}}=W^{j,-k}$. Thus the noise we consider is parameterized by the coefficients $\kappa,\ \theta_{j,k}$ and the set $K$. Under this setting, as described for example in \cite{galeati2020convergence, dong2022dissipation}, equation \eqref{eq:QG_ito} can be reformulated as 
\begin{align}
\label{eq:QG_ito GL}
\begin{split}
    dq_1 + \left(\nabla^{\perp}\psi_1\cdot \nabla q_1\right) dt = \left(\kappa\Delta q_1+\nu\Delta^2 \psi_1 - \beta \partial_x \psi_1 + F \right)\, dt + \sqrt{2\kappa}\sum_{k=1}^K\bm{a}_{1,k}e_k\cdot \nabla q_1\,  dW^{1,k}\\
    dq_2 + \left(\nabla^{\perp}\psi_2\cdot \nabla q_2\right) dt = \left(\kappa\Delta q_2 + \nu\Delta^2 \psi_2 -\beta \partial_x \psi_2 - r\Delta \psi_2\right)\, dt + \sqrt{2\kappa}\sum_{k=1}^K\bm{a}_{2,k}e_k\cdot \nabla q_2 \, dW^{2,k} .
    \end{split}
\end{align}
The corresponding deterministic system is 
\begin{align}
\label{eq:QG_deterministic GL}
\begin{split}
    d\Bar{q}_1 + \left(\nabla^{\perp}\Bar{\psi}_1\cdot \nabla \Bar{q}_1\right) dt = \left(\kappa\Delta \Bar{q}_1+\nu\Delta^2 \Bar{\psi}_1 - \beta \partial_x \Bar{\psi}_1 + F \right)\, dt\\
    d\Bar{q}_2 + \left(\nabla^{\perp}\Bar{\psi}_2\cdot \nabla \Bar{q}_2\right) dt = \left(\kappa\Delta \Bar{q}_2 + \nu\Delta^2 \Bar{\psi}_2 -\beta \partial_x \Bar{\psi}_2 - r\Delta \Bar{\psi}_2\right)\, dt,
    \end{split}
\end{align}
where, as before, $\Bar{\mathbf{q}}$ and $\Bar{\bfpsi}$ are linked by relation (\ref{eq:relation_q_psi}).

Thanks to Theorem~\ref{thm well psd qg vorticity} and classical results on two dimensional deterministic quasi-geostrophic equations, see for example \cite{Bernier94}, under the assumptions $\q_0\in \cH,\ F\in L^4(0,T;L^2(\mathcal{D}))$ there exists a unique weak solutions $\q$ (resp. $\Bar{\q}$) of the problem \eqref{eq:QG_ito GL} (resp. \eqref{eq:QG_deterministic GL}).
Now we can state one of the main results of our work which allows to quantify the difference between the behavior of the stochastic and the deterministic system.
\begin{theorem}\label{thm galeati}
Let $\q$ and $\bar{\q}$ be weak solutions to (\ref{eq:QG_ito GL}) and (\ref{eq:QG_deterministic GL}) respectively. Then for any $\alpha\in (0,1),$ there exists $C$ depending from $\alpha$ and all the parameters of the model except for the noise such that for any $\epsilon\in (0,\alpha]$ one has
\begin{itemize}
    \item[(i)] \begin{align*}
    \E \left[\lVert \q-\bar\q\rVert_{C([0,T];\mathbf{H}^{-\alpha})}^2\right]\lesssim_{\alpha,M,\epsilon,T}& \kappa^{\epsilon}\lVert \theta\rVert_{\ell^{\infty}}^{2(\alpha-\epsilon)}R_T^2 \exp\left(T\frac{\nu^2+\beta^2+r^2}{\kappa+\nu}\right)\\ & \exp\left(\frac{CTR_T^2}{(\kappa+\nu)^2}\left(1+\kappa+\nu\right)+\frac{C}{\left(\kappa+\nu\right)^2}\int_0^T\lVert F(s)\rVert^2\ ds\right).
\end{align*}
\item[(ii)]  \begin{align*}
    \E \left[\lVert \q-\bar\q\rVert_{C([0,T];\mathbf{H}^{-\alpha})}^2\right]\lesssim_{\alpha,M,\epsilon,T}& \kappa^{\epsilon}\lVert \theta\rVert_{\ell^{\infty}}^{2(\alpha-\epsilon)}R_T^2\exp\left(T\frac{\nu^2+\beta^2+r^2}{\kappa+\nu}\right)\\ &\exp\left(\frac{C}{\nu^2}\left(TR_T^2+\int_0^T\lVert F(s)\rVert^2\ ds \right)\right).
\end{align*} 
\end{itemize}
where $R_T^2$ is a constant independent of the noise defined in Section \ref{subsec:convergence}.
\end{theorem}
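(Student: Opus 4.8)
I would write $\bfxi=\q-\bar{\q}$ for the difference of the two potential vorticities and $\bfeta=\bfpsi-\bar{\bfpsi}$ for the difference of the streamfunctions, the two being linked through the elliptic system \eqref{eq:relation_q_psi}. Inverting \eqref{eq:relation_q_psi} expresses $\bfeta=\mathcal{A}^{-1}\bfxi$ for an invertible, coupled, second--order elliptic operator $\mathcal{A}$, so that $\bfeta$ gains two derivatives over $\bfxi$; this is the structural fact I would use to control the nonlinearity in a negative norm. Subtracting \eqref{eq:QG_deterministic GL} from \eqref{eq:QG_ito GL} and using that both solutions start from the same deterministic datum ($\bfxi(0)=0$), one checks that the two drifts share the eddy diffusion $\kappa\Delta$, the biharmonic viscosity, and the $\beta$-- and friction terms, so the only genuinely stochastic contribution to the equation for $\bfxi$ is the martingale $dM_j=\sqrt{2\kappa}\sum_k \bm{a}_{j,k}e_k\cdot\nabla q_j\,dW^{j,k}$, whose integrand is evaluated at the \emph{stochastic} solution $\q$. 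The plan is to run a quantitative energy estimate for $\bfxi$ in the weak norm $\mathbf{H}^{-\alpha}$, where this martingale source is small thanks to the scaling of $\bm{\sigma}_{j,k}$, while the dissipation $\kappa\Delta$ together with the viscosity closes a Gronwall argument.

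\textbf{It\^o expansion.} First I would apply It\^o's formula to $t\mapsto\|\bfxi(t)\|_{\mathbf{H}^{-\alpha}}^2$, equivalently to $\|(-\mathbf{\Delta})^{-\alpha/2}\bfxi\|^2$. The drift produces the negative definite term $-2\kappa\|\bfxi\|_{\mathbf{H}^{1-\alpha}}^2$ coming from $\kappa\Delta\bfxi$ and a further negative contribution from the biharmonic viscosity acting through $\bfeta=\mathcal{A}^{-1}\bfxi$; the difference of transport terms, which I would split as $\nabla^\perp\psi_j\cdot\nabla q_j-\nabla^\perp\bar\psi_j\cdot\nabla\bar q_j=\nabla^\perp\eta_j\cdot\nabla q_j+\nabla^\perp\bar\psi_j\cdot\nabla\xi_j$ and estimate in $H^{-\alpha}$ by product and commutator estimates, exploiting the two--derivative gain of $\bfeta$ over $\bfxi$ and the a priori regularity of $\q,\bar{\q}$; and the lower order $\beta$-- and $r$--terms, which are absorbed into the dissipation up to a constant of size $\tfrac{\nu^2+\beta^2+r^2}{\kappa+\nu}$, the origin of the first exponential factor. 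The martingale $2\langle\bfxi,dM\rangle_{\mathbf{H}^{-\alpha}}$ has zero mean and will be controlled by the Burkholder--Davis--Gundy inequality once the supremum over $[0,T]$ is taken.

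\textbf{The source term, the main obstacle.} The crucial and technically hardest step is the It\^o correction $d[M]$, namely $2\kappa\sum_k\|\bm{a}_{j,k}e_k\cdot\nabla q_j\|_{H^{-\alpha}}^2$, which is the only term driving $\bfxi$ away from zero. Expanding $\bm{a}_{j,k}e_k\cdot\nabla q_j$ in Fourier modes shows that this operator shifts a frequency $m$ of $q_j$ to $k+m$ with amplitude proportional to $\theta_{j,k}\,|k^\perp\cdot m|/|k|$, so the $H^{-\alpha}$ weight $|k+m|^{-2\alpha}$ is small precisely on the high frequencies $k$ where $\theta$ is supported. Interpolating $\theta_{j,k}^2\le\|\theta\|_{\ell^\infty}^{2(\alpha-\epsilon)}\,\theta_{j,k}^{2(1-\alpha+\epsilon)}$ (valid since $\epsilon\in(0,\alpha]$) and combining the normalisation $\sum_k\theta_{j,k}^2=1$, the isotropy condition, and the frequency decay, I would bound this source by $\kappa^{\epsilon}\|\theta\|_{\ell^\infty}^{2(\alpha-\epsilon)}$ times a fixed Sobolev norm of $q_j$, the leftover power $\kappa^{1-\epsilon}$ being traded against the dissipation and the a priori bound. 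This is where the small factor $\kappa^{\epsilon}\|\theta\|_{\ell^\infty}^{2(\alpha-\epsilon)}$ of the statement is generated, and pinning down the exponents sharply is the delicate point of the whole argument.

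\textbf{Closing the estimate.} Finally I would integrate in time, take the supremum over $[0,T]$, bound the martingale via Burkholder--Davis--Gundy and absorb the resulting quadratic term into the left--hand side, and conclude by a stochastic Gronwall inequality; the uniform a priori bounds on $\q$ and $\bar{\q}$ enter through the noise--independent constant $R_T^2$. The two estimates $(i)$ and $(ii)$ then correspond to two ways of absorbing the nonlinear and forcing contributions before invoking Gronwall: using the full $(\kappa+\nu)$--dissipation produces the $\kappa$--dependent constants in $(i)$, whereas discarding the $\kappa\Delta$ part and retaining only the biharmonic viscosity $\nu$ produces the bound $(ii)$, which is uniform in $\kappa$ and hence the one suited to the subsequent long--time and stationary analysis.
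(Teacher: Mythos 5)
Your plan takes a genuinely different route from the paper (a direct It\^o energy estimate for $\bm{\xi}=\q-\bar{\q}$ in $\mathbf{H}^{-\alpha}$ rather than a mild formulation), and it has a genuine gap at exactly the step you yourself identify as the delicate one. The It\^o correction $2\kappa\sum_{k}\lVert \bm{a}_{j,k}e_k\cdot\nabla q_j\rVert_{H^{-\alpha}}^2$ cannot be bounded by $\kappa^{\epsilon}\lVert\theta\rVert_{\ell^\infty}^{2(\alpha-\epsilon)}$ times a quantity controlled by the a priori estimates when $\alpha\in(0,1)$. In Fourier variables the summand is comparable to $\theta_{j,k}^2\,|k^{\perp}\cdot m|^2\,|k|^{-2}|k+m|^{-2\alpha}|\hat{q}_j(m)|^2$, and on the region $|m|\gg|k|$ (frequencies of the solution above those of the noise) one has $|k+m|\sim|m|$ and $|k^\perp\cdot m|\le |k||m|$, so the summand is of size $\theta_{j,k}^2|m|^{2-2\alpha}|\hat q_j(m)|^2$; summing over $k$ with $\sum_k\theta_{j,k}^2=1$ leaves a contribution of order $\kappa\lVert q_j\rVert_{H^{1-\alpha}}^2$ carrying \emph{no} factor of $\lVert\theta\rVert_{\ell^\infty}$ whatsoever. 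The frequency-shift smallness you invoke is real, but it operates only \emph{below} the $H^{-1}$ threshold, i.e.\ in norms $H^{-1-\delta}$, $\delta>0$ (this is the content of the key estimates of \cite{flandoli2021quantitative}); at regularity $-\alpha>-1$ it fails. Nor can the leftover $\kappa\lVert q_j\rVert^2_{H^{1-\alpha}}$ be ``traded against the dissipation'': the dissipation your energy estimate produces is $-2(\kappa+\nu)\lVert\bm{\xi}\rVert^2_{\mathbf{H}^{1-\alpha}}$, which involves $\bm{\xi}$ and not $\q$, and the a priori bound \eqref{eq:int nabla q} for $\int_0^T\lVert\nabla\q\rVert^2$ scales like $1/\nu$, not $1/(\kappa+\nu)$ — the transport noise conserves the $L^2$ norm of $\q$, so $\q$ itself enjoys no $\kappa$-dissipation — leaving an error of order $\kappa/\nu$, which grows with $\kappa$ instead of vanishing like $\kappa^{\epsilon}\lVert\theta\rVert_{\ell^\infty}^{2(\alpha-\epsilon)}$. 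The same obstruction hits your Burkholder--Davis--Gundy bound of the martingale term, whose quadratic variation is the same object.

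This is precisely why the paper does not run an energy method. It shows (Lemma~\ref{mild}, Remark~\ref{mild det}) that $\q$ and $\bar{\q}$ satisfy Duhamel formulations with respect to $e^{(\kappa+\nu)t\mathbf{\Delta}}$, so the entire stochastic contribution to $\bm{\xi}$ is the stochastic convolution $\mathbf{Z}$ of \eqref{eq:defZ(t)}: the semigroup allows the noise increments to be measured in $\mathbf{H}^{-1-\delta}$, where the $\lVert\theta\rVert_{\ell^\infty}^{2\delta}$-smallness does hold, and lifted back to $\mathbf{H}^{-\alpha}$ at the cost of an integrable singularity and a factor $(\kappa+\nu)^{-(1+\delta-\alpha)/2}$ that beats the $\sqrt{\kappa}$ prefactor of the noise; this is \autoref{regularity stochastic convolution}(iii), imported from Corollary~2.6 of \cite{flandoli2021quantitative}, and it is the sole source of the factor $\kappa^{\epsilon}\lVert\theta\rVert_{\ell^\infty}^{2(\alpha-\epsilon)}$ in the statement. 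The difference of the two mild formulations is then estimated pathwise using the semigroup smoothing (\autoref{lemma:12_2.3}) and the Lipschitz estimate of \autoref{lipshitz nonlinear} for the nonlinearity in $\mathbf{H}^{-1-\alpha}$, and closed by a \emph{deterministic} Gronwall inequality: no BDG or stochastic Gronwall is needed, because the bounds \eqref{eq:defR_T}--\eqref{eq:int nabla q bar} are $\bP$-a.s.\ deterministic and the expectation is only applied, at the very end, to $\sup_t\lVert\mathbf{Z}(t)\rVert_{\mathbf{H}^{-\alpha}}^2$. Your reading of the difference between (i) and (ii) — using the $(\kappa+\nu)$-dissipation bound versus the $\kappa$-uniform, $\nu$-only bound inside the Gronwall exponent — is correct in spirit, but this occurs after the stochastic term has been handled, so it cannot repair the defect above.
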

%

Next, given the model \eqref{eq:QG_strato} now with a time-independent forcing $F$, consider the associated stationary system. On the one hand, similarly to the Navier-Stokes system, existence and uniqueness of the solution of the stationary system associated to quasi-geostrophic equations is guaranteed under, generally unfeasible, assumptions on the viscosity. On the other hand, as we will show in \autoref{sec long time}, in our framework these assumptions will be satisfied thanks to the dissipation properties of the transport noise. 
More precisely, consider $\tilde{\q}$ solution of the following system
\begin{align}
\label{eq:QG_stationary}
\begin{split}
    \nabla^{\perp}\tilde{\psi}_1\cdot \nabla \tilde{q}_1 = \kappa\Delta \tilde{q}_1+\nu\Delta^2 \tilde{\psi}_1 - \beta \partial_x \tilde{\psi}_1 + F\\
    \nabla^{\perp}\tilde{\psi}_2\cdot \nabla \tilde{q}_2 = \kappa\Delta \tilde{q}_2+\nu\Delta^2 \tilde{\psi}_2 -\beta \partial_x \tilde{\psi}_2 - r\Delta \tilde{\psi}_2.
    \end{split}
\end{align}
where, $F\in L^2(\mathcal{D})$ and as always, $\tilde{\mathbf{q}}$ and $\tilde{\bfpsi}$ are linked by relation (\ref{eq:relation_q_psi}). Then, thanks to a particular parametrization of the noise, we will show in \autoref{sec long time} the following:
\begin{theorem}\label{thm:convergence deterministic}
For $\kappa$ large enough, for each $\delta>0$ and $\alpha\in (0,1)$, it exists $\Bar{T}=\Bar{T}(\delta)$ and a sequence $\{\theta_{j,k}\}_{k\in K,j\in \{1,2\}}$ depending from $\delta,\ \Bar{T}$, $\alpha$ such that for each $t\in [\Bar{T},2\Bar{T}]$
\begin{align*}
    \E \left[\lVert \q(t)-\tilde{\q}\rVert^2_{\mathbf{H}^{-\alpha}}\right]\leq \delta.
\end{align*}
\end{theorem}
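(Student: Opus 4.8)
The plan is to insert the deterministic stationary solution $\tilde{\q}$ and the deterministic \emph{time-dependent} solution $\bar{\q}$ of \eqref{eq:QG_deterministic GL} (started from the same deterministic datum $\q_0$ and driven by the same time-independent $F$ as the stochastic solution $\q$) between $\q(t)$ and $\tilde{\q}$, and to split via the triangle inequality: since $\|\q(t)-\tilde{\q}\|_{\mathbf{H}^{-\alpha}}^2\le 2\|\q(t)-\bar{\q}(t)\|_{\mathbf{H}^{-\alpha}}^2+2\|\bar{\q}(t)-\tilde{\q}\|_{\mathbf{H}^{-\alpha}}^2$, it suffices to make each term on the right at most $\delta/4$ uniformly on $[\bar T,2\bar T]$. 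The first term is a stochastic-to-deterministic comparison controlled by \autoref{thm galeati}; the second is a purely deterministic convergence-to-equilibrium statement, and it is there that the size of $t$ is exploited through the choice of $\bar T$.

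First I would establish the deterministic relaxation. For $\kappa$ large the term $\kappa\Delta$ behaves as an enhanced viscosity, so the stationary problem \eqref{eq:QG_stationary} is uniquely solvable by the classical large-viscosity argument for stationary quasi-geostrophic/Navier--Stokes type systems (Galerkin approximation together with an a priori bound forcing a contraction/monotonicity estimate), cf.\ \cite{Bernier94}. Setting $\bfw:=\bar{\q}-\tilde{\q}$ and denoting by $\bfw_\psi$ the associated streamfunction difference through \eqref{eq:relation_q_psi}, the difference equation has a nonlinearity that expands into a transport term advecting $\bfw$ itself, which is energy-neutral by the divergence-free structure of the advecting velocity, and a genuinely bilinear term advecting the stationary vorticity $\tilde{\q}$ by the difference velocity $\nabla^\perp\bfw_\psi$. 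Testing against the natural enstrophy functional, only the bilinear term survives; bounding it by $\|\tilde{\q}\|$ times Sobolev norms of $\bfw$ (via elliptic regularity for $\bfw_\psi$) and absorbing it into the dissipation $-\kappa\|\nabla\bfw\|^2$ yields, for $\kappa$ large enough to also dominate the non-self-adjoint terms $\beta\partial_x$ and the bottom friction $r$, a differential inequality $\ddt\|\bfw\|_{\cH}^2\le-\lambda\|\bfw\|_{\cH}^2$ with $\lambda>0$. This gives exponential decay $\|\bar{\q}(t)-\tilde{\q}\|_{\cH}\le e^{-\lambda t}\|\q_0-\tilde{\q}\|_{\cH}$, and since $\alpha>0$ implies $\|\cdot\|_{\mathbf{H}^{-\alpha}}\lesssim\|\cdot\|_{\cH}$, I can fix $\bar T=\bar T(\delta)$ so that $\|\bar{\q}(t)-\tilde{\q}\|_{\mathbf{H}^{-\alpha}}^2\le\delta/4$ for all $t\ge\bar T$, in particular on $[\bar T,2\bar T]$.

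With $\kappa$, $\nu$, the remaining model parameters and $\bar T$ now frozen, I would handle the stochastic term on the fixed horizon $T=2\bar T$. Applying \autoref{thm galeati}, every exponential factor is a finite constant, so the right-hand side is bounded by $C(\alpha,\bar T,\kappa,\nu,\dots)\,\kappa^{\epsilon}\|\theta\|_{\ell^\infty}^{2(\alpha-\epsilon)}R_{2\bar T}^2$ for a fixed $\epsilon\in(0,\alpha)$. The point is that the normalisation $\sum_k\theta_{j,k}^2=1$ prevents making the coefficients uniformly small, so instead I would spread the unit $\ell^2$-mass over a large number $N=|K|$ of high-frequency modes, e.g.\ $\theta_{j,k}\sim N^{-1/2}$, so that $\|\theta\|_{\ell^\infty}\sim N^{-1/2}\to0$ while the constraint is preserved; choosing $N$ large determines the sequence $\{\theta_{j,k}\}$ (depending on $\delta,\bar T,\alpha$) and makes $\E[\|\q-\bar{\q}\|_{C([0,2\bar T];\mathbf{H}^{-\alpha})}^2]\le\delta/4$. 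Combining the two bounds through the triangle inequality closes the proof for every $t\in[\bar T,2\bar T]$.

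The step I expect to be the main obstacle is the deterministic relaxation in the second paragraph: making the energy inequality close \emph{uniformly in time} requires a dissipative absorbing-set bound on $\bar{\q}$ and on $\tilde{\q}$ to control the coefficient $\|\tilde{\q}\|$ in the surviving bilinear term, and it requires quantifying how large $\kappa$ must be for the enhanced dissipation to beat both the advection nonlinearity and the non-symmetric $\beta$- and $r$-terms, which is precisely the ``$\kappa$ large enough'' hypothesis. The stochastic step, by contrast, is essentially a bookkeeping application of \autoref{thm galeati} once the spreading of $\theta$ over many modes is set up.
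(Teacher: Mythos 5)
Your proposal is correct and follows essentially the same route as the paper: the paper likewise splits via the triangle inequality through the time-dependent deterministic solution $\bar{\q}$, proves (its Proposition on the stationary problem) existence/uniqueness of $\tilde{\q}$ for large $\kappa$ by a contraction argument plus an energy estimate giving exponential decay of $\lVert \bar{\q}(t)-\tilde{\q}\rVert$, fixes $\bar T(\delta)$ from that decay, and then applies \autoref{thm galeati} on $[0,2\bar T]$ with a choice of $\theta$ having small $\ell^\infty$ norm under the unit $\ell^2$ constraint (the paper points to the standard example of spreading mass over many modes, exactly as you describe). The only cosmetic difference is that the paper invokes Lax--Milgram rather than Galerkin for the auxiliary linear problem in the contraction step; the substance of both arguments is identical.
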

%


\section{Preliminaries}
\label{preliminary sec}
In this section we recall several technical tools crucial for the next sections, see for example \cite{breckner1999approximation},\cite{carigi2022exponential},\cite{flandoli2021quantitative},\cite{pazy2012semigroups} for more details.

First, similarly to \cite{carigi2022exponential}, we define the linear operator $\tilde{A}:\mathbf{H}^{k+2}\rightarrow \mathbf{H}^k,\ k\in \mathbb{R}$ connecting the streamfunction with the quasi–geostrophic potential vorticity
\begin{align*}
    \tilde{A}\bm{\psi}:=\left(-\bm{\Delta}-M\right)\bm{\psi}, \textit{ where } M= \begin{pmatrix}
-S_1& S_1 \\
S_2 & -S_2.
\end{pmatrix}
\end{align*}
It is well known that it has a bounded inverse $\left(-\bm{\Delta}-M\right)^{-1}:\mathbf{H}^k\rightarrow \mathbf{H}^{k+2}$.
Thanks to this fact, for each $\q\in \mathbf{H}^k$, there exists a unique $\bfpsi\in \mathbf{H}^{k+2}$ such that $\q=\left(\mathbf{\Delta}+M\right)\bfpsi $ and moreover for each $k\in \mathbb{R}$ there exists two constants $c_{1,k}\leq c_{2,k}$ such that
\begin{align*}
    c_{1,k}\lVert \bfpsi\rVert_{\mathbf{H}^{k+2}}\leq \lVert \q\rVert_{\mathbf{H}^{k}}\leq c_{2,k}\lVert \bfpsi\rVert_{\mathbf{H}^{k+2}}.
\end{align*}
In this work we will also use extensively the relation
\begin{equation}
\label{eq:difference relation}
    \psi_1-\psi_2=(-\Delta+S_1+S_2)^{-1}(q_2-q_1)
\end{equation}
which follows directly from \eqref{eq:relation_q_psi}.


We recall three technical lemmata which can be proved by classical arguments and we refer to \cite[Section~2.1]{flandoli2021quantitative}. 
\begin{lemma}[{\cite[Lemma~2.1]{flandoli2021quantitative}}]\label{preliminary non linear}
Given a divergence free vector field $ V\in L^2(\mathcal{D};\mathbb{R}^2)$ the following bounds hold true.
\begin{enumerate}
    \item If $ V\in L^{\infty}\left(\mathcal{D};\mathbb{R}^2\right)$, $f\in L^2(\mathcal{D})$, then we have \begin{align*}
        \lVert V\cdot \nabla f\rVert_{H^{-1}}\lesssim \lVert V\rVert_{L^{\infty}}\lVert f\rVert.
    \end{align*}
    \item Let $\alpha\in (1,2],\ \beta\in (0,\alpha-1),\ V\in H^{\alpha}(\mathcal{D};\mathbb{R}^2),\ f\in {H}^{-\beta}(\mathcal{D}),$ we have
    \begin{align*}
        \lVert V\cdot \nabla f\rVert_{H^{-1-\beta}}\lesssim_{\alpha,\beta} \lVert V\rVert_{H^{\alpha}}\lVert f\rVert_{{H}^{-\beta}}.
    \end{align*}
    \item Let $\beta \in (0,1),$ then for any $f\in H^{\beta}(\mathcal{D}),\ g\in H^{1-\beta}(\mathcal{D})$ it holds
    \begin{align*}
        \lVert fg\rVert\lesssim_{\beta } \lVert f\rVert_{H^{\beta}}\lVert g\rVert_{H^{1-\beta}}
    \end{align*}
    \item Let $\beta\in (0,1),\ V\in H^{1-\beta}(\mathcal{D};\mathbb{R}^2),\ f\in L^2(\mathcal{D}),$ then one has
    \begin{align*}
        \lVert V\cdot \nabla f\rVert_{H^{-1-\beta}}\lesssim_{\beta} \lVert V\rVert_{H^{1-\beta}}\lVert f\rVert.
    \end{align*}
\end{enumerate}
\end{lemma}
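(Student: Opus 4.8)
The four estimates all rest on a single structural fact: because the advecting field is divergence free, $V\cdot\nabla f=\operatorname{div}(fV)$, so advection is a spatial divergence and therefore gains one degree of negative smoothness. Combined with the duality characterisation $\lVert u\rVert_{H^{-s}}=\sup\{\langle u,\varphi\rangle:\lVert\varphi\rVert_{H^s}\le 1\}$, valid on the periodic domain $\mathcal{D}$ (where the integrations by parts below generate no boundary terms), each bound collapses to a product or multiplier estimate in Sobolev spaces. The plan is to prove the pointwise product estimate (item 3) first, since it feeds directly into items 1 and 4, and to reserve a separate fractional Leibniz argument for item 2.

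For item 3 I would invoke the Sobolev embeddings in dimension $d=2$: for $0\le s<1$ one has $H^s(\mathcal{D})\hookrightarrow L^{p_s}(\mathcal{D})$ with $1/p_s=1/2-s/2$. Taking $s=\beta$ and $s=1-\beta$ produces exponents $p_\beta,p_{1-\beta}$ with $1/p_\beta+1/p_{1-\beta}=1/2$, so Hölder's inequality gives $\lVert fg\rVert\le\lVert f\rVert_{L^{p_\beta}}\lVert g\rVert_{L^{p_{1-\beta}}}\lesssim_\beta\lVert f\rVert_{H^\beta}\lVert g\rVert_{H^{1-\beta}}$. Items 1 and 4 then follow by duality. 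For item 1, test against $\varphi$ with $\lVert\varphi\rVert_{H^1}\le 1$ and integrate by parts: $\langle V\cdot\nabla f,\varphi\rangle=-\langle fV,\nabla\varphi\rangle$ is bounded by $\lVert f\rVert\,\lVert V\rVert_{L^\infty}\lVert\nabla\varphi\rVert\le\lVert V\rVert_{L^\infty}\lVert f\rVert$. For item 4, test against $\varphi$ with $\lVert\varphi\rVert_{H^{1+\beta}}\le 1$; the same computation yields $|\langle V\cdot\nabla f,\varphi\rangle|\le\lVert f\rVert\,\lVert V\cdot\nabla\varphi\rVert$, and since $\nabla\varphi\in H^{\beta}$ and $V\in H^{1-\beta}$, item 3 gives $\lVert V\cdot\nabla\varphi\rVert\lesssim_\beta\lVert V\rVert_{H^{1-\beta}}\lVert\nabla\varphi\rVert_{H^\beta}\lesssim\lVert V\rVert_{H^{1-\beta}}$; taking the supremum over $\varphi$ closes it.

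Item 2 is the one that needs a genuinely different ingredient. Testing $\operatorname{div}(fV)$ against $\varphi$ with $\lVert\varphi\rVert_{H^{1+\beta}}\le 1$ and integrating by parts produces the pairing $\langle f,V\cdot\nabla\varphi\rangle$ between $H^{-\beta}$ and $H^{\beta}$, so that $|\langle V\cdot\nabla f,\varphi\rangle|\le\lVert f\rVert_{H^{-\beta}}\lVert V\cdot\nabla\varphi\rVert_{H^\beta}$. The crux is the multiplier bound $\lVert V\cdot\nabla\varphi\rVert_{H^\beta}\lesssim_{\alpha,\beta}\lVert V\rVert_{H^\alpha}\lVert\nabla\varphi\rVert_{H^\beta}$, i.e. that $H^\alpha(\mathcal{D})$ acts as a pointwise multiplier on $H^\beta(\mathcal{D})$; this holds because $\alpha>1=d/2$ and $0<\beta<\alpha$. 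I would deduce it from the fractional Leibniz (Kato--Ponce) inequality together with the embedding $H^{\alpha-\beta}\hookrightarrow L^\infty$, which is available precisely since $\alpha-\beta>1$, that is $\beta<\alpha-1$ — exactly the stated hypothesis. Using $\lVert\nabla\varphi\rVert_{H^\beta}\le\lVert\varphi\rVert_{H^{1+\beta}}\le 1$ and taking the supremum over $\varphi$ gives the claim.

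I expect the main obstacle to be the bookkeeping in this last fractional Leibniz step rather than any conceptual difficulty: one must distribute the $\beta$ derivatives onto the two factors and check that the constraint $\beta<\alpha-1$ is exactly what lets the top-order factor $V\in H^\alpha$ land in $L^\infty$ after losing $1+\beta$ derivatives. The divergence-free reduction and the duality pairings are routine once this multiplier estimate is in hand.
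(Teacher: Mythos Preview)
The paper does not actually prove this lemma: it is quoted verbatim from \cite[Lemma~2.1]{flandoli2021quantitative} with the remark that it ``can be proved by classical arguments'', and no argument is supplied. Your proposal is therefore not competing against a proof in the paper but supplying one where the paper defers to the literature.

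Your argument is correct and is the standard route. The reduction $V\cdot\nabla f=\operatorname{div}(fV)$ for divergence-free $V$, followed by duality, is exactly how these estimates are usually derived; item~3 via the two-dimensional embeddings $H^\beta\hookrightarrow L^{2/(1-\beta)}$ and H\"older is the clean way to get the product bound, and items~1 and~4 then fall out as you indicate. For item~2 your plan is also sound: after integrating by parts one needs the multiplier bound $\lVert V\,w\rVert_{H^\beta}\lesssim\lVert V\rVert_{H^\alpha}\lVert w\rVert_{H^\beta}$, and Kato--Ponce with the $L^\infty$ embedding of $H^{\alpha-\beta}$ (available since $\alpha-\beta>1$) delivers it. One small remark: the multiplier estimate in fact holds under the weaker hypothesis $\alpha>1$, $0\le\beta\le\alpha$ (this is the general Sobolev product rule $H^{s_1}\cdot H^{s_2}\hookrightarrow H^{s}$ with $s_1+s_2-s>d/2$), so the condition $\beta<\alpha-1$ in the statement is not sharp; your Kato--Ponce route happens to use it, but that is a feature of the chosen proof rather than a necessity. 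Also, when $f\in H^{-\beta}$ is only a distribution, the step $\langle fV,\nabla\varphi\rangle=\langle f,V\cdot\nabla\varphi\rangle$ needs a word of justification (e.g.\ by smooth approximation of $f$), but this is routine.
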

\begin{remark}
With some abuse of notations, if $\bfpsi$ and $\q$ are two vector fields, we will denote  $$\nabla^{\perp}\bfpsi\cdot \nabla\q:=\left(\nabla^{\perp}\psi_1\cdot\nabla q_1,\nabla^{\perp}\psi_2\cdot\nabla q_2\right)^t$$
and the results of previous lemma continue to hold in this framework. Then this first lemma generalises fundamental estimates for the nonlinearity of the quasi-geostrophic model presented for example in \cite{carigi2022exponential, Chueshov01_Proba}.
\end{remark}
The second lemma provides classical estimates on the semigroup generated by $\bm{\Delta}$:
\begin{lemma}[{\cite[Lemma~2.2]{flandoli2021quantitative}}]\label{lemma:12_2.2}
Let $\q\in \mathbf{H}^{\alpha},\ \alpha\in \mathbb{R}$. Then:
\begin{enumerate}
    \item for any $\rho\geq 0,$ it holds $\lVert e^{t\mathbf{\Delta}}\q\rVert_{\mathbf{H}^{\alpha+\rho}}\leq C_{\rho}t^{-\rho/2}\lVert \q\rVert_{\mathbf{H}^{\alpha}}$ for some constant increasing in $\rho$;
    \item for any $\rho\in [0,2],$ it holds $\lVert \left( I-e^{t\mathbf{\Delta}}\right)\q\rVert_{\mathbf{H}^{\alpha-\rho}}\lesssim_{\rho} t^{\rho/2}\lVert \q\rVert_{\mathbf{H}^{\alpha}}$.
\end{enumerate}
\end{lemma}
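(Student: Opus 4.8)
The plan is to diagonalise $e^{t\mathbf{\Delta}}$ in the Fourier basis and reduce everything to two elementary scalar inequalities. Since $\mathbf{\Delta}$ acts componentwise as the periodic Laplacian and the norm $\lVert\cdot\rVert_{\mathbf{H}^k}$ is the sum of the two scalar $H^k$ norms, it suffices to prove each estimate for a single scalar component $f\in H^\alpha(\mathcal{D})$ and then add the two contributions. I would first record that, on the zero-mean $L$-periodic functions, $\{e_k\}_{k\in\mathbb{Z}^2_0}$ diagonalises $-\Delta$ with eigenvalues $\lambda_k=\tfrac{4\pi^2}{L^2}\lvert k\rvert^2$, that these are bounded below by $\lambda_1>0$ (the spectral gap coming from the zero-mean constraint), and that consequently $\lVert f\rVert_{H^s}^2$ is comparable to $\sum_{k}\lambda_k^{s}\lvert\hat f_k\rvert^2$ with equivalence constants depending only on $s$. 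Writing $f=\sum_k\hat f_k e_k$ gives the explicit representation $e^{t\Delta}f=\sum_k e^{-t\lambda_k}\hat f_k e_k$, so that both sides of each claim become weighted $\ell^2$ sums and the problem is purely about bounding scalar Fourier multipliers uniformly in $k$.

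For item (1) I would factor the target norm as
\begin{align*}
\lVert e^{t\Delta}f\rVert_{H^{\alpha+\rho}}^2\simeq\sum_{k}\lambda_k^{\alpha+\rho}e^{-2t\lambda_k}\lvert\hat f_k\rvert^2=\sum_k\lambda_k^{\alpha}\lvert\hat f_k\rvert^2\cdot\bigl(\lambda_k^{\rho}e^{-2t\lambda_k}\bigr),
\end{align*}
and then invoke the elementary identity $\sup_{x\ge0}x^{\rho}e^{-2tx}=\bigl(\tfrac{\rho}{2e}\bigr)^{\rho}t^{-\rho}$, which bounds the bracket by $C_\rho t^{-\rho}$ uniformly in $k$. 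Pulling this constant out and using the norm equivalence again yields $\lVert e^{t\Delta}f\rVert_{H^{\alpha+\rho}}\le C_\rho t^{-\rho/2}\lVert f\rVert_{H^\alpha}$; passing to a monotone majorant of $C_\rho$ makes the constant nondecreasing in $\rho$, as the statement requires.

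For item (2) the same factorisation gives
\begin{align*}
\lVert (I-e^{t\Delta})f\rVert_{H^{\alpha-\rho}}^2\simeq\sum_k\lambda_k^{\alpha}\lvert\hat f_k\rvert^2\cdot\lambda_k^{-\rho}\bigl(1-e^{-t\lambda_k}\bigr)^2,
\end{align*}
and the whole point is the elementary inequality $(1-e^{-x})^2\le C_\rho\,x^{\rho}$, valid for all $x\ge0$ precisely when $\rho\in[0,2]$: writing $x=t\lambda_k$ turns the multiplier into $t^{\rho}(t\lambda_k)^{-\rho}(1-e^{-t\lambda_k})^2\le C_\rho t^{\rho}$. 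This immediately gives $\lVert (I-e^{t\Delta})f\rVert_{H^{\alpha-\rho}}\lesssim_\rho t^{\rho/2}\lVert f\rVert_{H^\alpha}$.

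There is no deep obstacle here — the result is a standard analytic-semigroup smoothing and Hölder-in-time estimate — so the work is entirely in getting the elementary scalar bounds and their ranges right. The one genuinely sharp point is the restriction $\rho\in[0,2]$ in item (2): it is forced by the small-$x$ behaviour $1-e^{-x}\sim x$, which makes $(1-e^{-x})^2\sim x^2$ and hence only comparable to $x^\rho$ near $0$ when $\rho\le2$; for $\rho>2$ the bound fails. The remaining care is bookkeeping: tracking the $s$-dependent constants in the homogeneous/inhomogeneous norm equivalence (harmless because the spectrum is bounded away from zero) and verifying the componentwise reduction for the vector spaces $\mathbf{H}^k$, both of which are routine.
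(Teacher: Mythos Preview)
Your argument is correct and is the standard spectral/Fourier-multiplier proof of these analytic-semigroup estimates. Note, however, that the paper does not supply its own proof of this lemma: it is stated with a citation to \cite[Lemma~2.2]{flandoli2021quantitative} and introduced as one of the ``technical lemmata which can be proved by classical arguments.'' Your proof is precisely the classical argument one would expect behind that citation, so there is nothing to compare against in the present paper.
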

The semigroup $e^{\delta(t-s)\mathbf{\Delta}}$ has also regularising effects as stated in the following:
\begin{lemma}[{\cite[Lemma~2.3]{flandoli2021quantitative}}]\label{lemma:12_2.3}
For any $\delta>0,\ \alpha\in \mathbb{R},\ \q\in L^2(0,T;\mathbf{H}^{\alpha})$, it holds
\begin{align*}
    \left\lVert \int_0^t e^{\delta(t-s)\mathbf{\Delta}}\q(s) \ ds\right\rVert_{\mathbf{H}^{\alpha+1}}^2\lesssim\frac{1}{\delta}\int_0^t \lVert \q(s)\rVert_{\mathbf{H}^{\alpha}}^2\ ds\ \ \forall t\in [0,T].
\end{align*}
\end{lemma}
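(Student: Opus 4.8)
The plan is to diagonalise the generator $\mathbf{\Delta}$ in the Fourier basis and reduce the claim to a scalar convolution inequality on each mode. Since $e^{t\mathbf{\Delta}}$ acts componentwise, $e^{t\mathbf{\Delta}}(q_1,q_2)^t=(e^{t\Delta}q_1,e^{t\Delta}q_2)^t$, and $\lVert\cdot\rVert_{\mathbf{H}^\alpha}^2$ is the sum of the two component $H^\alpha$ norms, the two layers decouple completely; it therefore suffices to prove the scalar estimate for a single component $f\in L^2(0,T;H^\alpha)$ and add the two contributions. Recall that $\{e_k\}_{k\in\mathbb{Z}^2_0}$ is an orthonormal basis of eigenfunctions of $-\Delta$, say $-\Delta e_k=\lambda_k e_k$ with $\lambda_k>0$ bounded below (we work with zero-mean functions, so the homogeneous norm $\lVert f\rVert_{H^s}^2=\sum_k\lambda_k^s\lvert\hat f_k\rvert^2$ is equivalent to the standard one). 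Writing $f(s)=\sum_k\hat f_k(s)\,e_k$ and using that the semigroup acts diagonally, $e^{\delta(t-s)\Delta}e_k=e^{-\delta(t-s)\lambda_k}e_k$, I would first record the mode decomposition
\begin{align*}
    \left\lVert \int_0^t e^{\delta(t-s)\Delta}f(s)\,ds\right\rVert_{H^{\alpha+1}}^2
    =\sum_k \lambda_k^{\alpha+1}\left\lvert \int_0^t e^{-\delta(t-s)\lambda_k}\hat f_k(s)\,ds\right\rvert^2.
\end{align*}

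Next I would estimate each Fourier coefficient by splitting the exponential weight symmetrically and applying Cauchy--Schwarz in the time variable,
\begin{align*}
    \left\lvert \int_0^t e^{-\delta(t-s)\lambda_k}\hat f_k(s)\,ds\right\rvert^2
    \leq \left(\int_0^t e^{-\delta(t-s)\lambda_k}\,ds\right)\left(\int_0^t e^{-\delta(t-s)\lambda_k}\lvert \hat f_k(s)\rvert^2\,ds\right).
\end{align*}
The first factor is controlled by extending the integral to the half line, $\int_0^t e^{-\delta(t-s)\lambda_k}\,ds\leq (\delta\lambda_k)^{-1}$, and this is the arithmetic heart of the lemma: the factor $\lambda_k^{-1}$ is exactly what absorbs the extra derivative, turning the prefactor $\lambda_k^{\alpha+1}$ into $\delta^{-1}\lambda_k^{\alpha}$. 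Bounding the remaining exponential crudely by $e^{-\delta(t-s)\lambda_k}\leq 1$ then leaves a clean $L^2$-in-time weight.

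Finally I would sum over $k$ and interchange summation and time integration by Tonelli (every term is nonnegative), which yields
\begin{align*}
    \left\lVert \int_0^t e^{\delta(t-s)\Delta}f(s)\,ds\right\rVert_{H^{\alpha+1}}^2
    \leq \frac{1}{\delta}\int_0^t\sum_k \lambda_k^{\alpha}\lvert \hat f_k(s)\rvert^2\,ds
    =\frac{1}{\delta}\int_0^t \lVert f(s)\rVert_{H^\alpha}^2\,ds,
\end{align*}
and summing the two layers gives the stated vector-valued inequality. I do not expect any genuine obstacle: this is a standard maximal-regularity--type computation, and the only points requiring a little care are the justification of the mode-by-mode diagonalisation together with the Tonelli interchange, and the observation that the weighted norm $\vertiii{\cdot}_{\mathbf{H}^{\alpha}}$ is equivalent to $\lVert\cdot\rVert_{\mathbf{H}^{\alpha}}$, so that the weights $h_1,h_2$ only affect the implicit constant hidden in $\lesssim$.
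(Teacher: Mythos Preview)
Your argument is correct and is exactly the standard maximal-regularity computation one would expect. Note, however, that the paper does not supply its own proof of this lemma: it is merely quoted from \cite[Lemma~2.3]{flandoli2021quantitative}, with the authors explicitly writing that the technical lemmata ``can be proved by classical arguments'' and referring the reader to that source. There is therefore nothing in the paper to compare against beyond the citation, and your Fourier diagonalisation plus Cauchy--Schwarz proof is precisely the classical argument the authors allude to.
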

Finally consider the following classical result.
\begin{lemma}[{\cite[Proposition~B.3]{breckner1999approximation}}] \label{lemma:convergence}
If $\{Q_N\}_{N\geq 1}\subseteq L^2(\Omega,\mathcal{F},\mathbb{P};L^2(0,T;\mathbb{R}))$ are continuous stochastic processes, $\{\sigma_M\}_{M\geq 1}$ are $\mathcal{F}_t$-stopping times increasing to $T$ and such that \begin{align*}
    \operatorname{sup}_{N\geq 1}\E \left[\lvert Q_N(T)\rvert^2\right]<+\infty\\ \lim_{N\rightarrow+\infty}\E \left[\lvert Q_N(\sigma_M)\rvert\right]=0,\ \ \forall M\geq 1
\end{align*}
then $\E \left[\lvert Q_N(T)\rvert\right]\rightarrow 0$.
\end{lemma}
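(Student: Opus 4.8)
The plan is to prove the claim by a standard uniform-integrability argument, splitting $Q_N(T)$ according to whether the stopping time $\sigma_M$ has already reached $T$. Fix $\varepsilon>0$ and set $C:=\sup_{N\geq 1}\E[|Q_N(T)|^2]<\infty$. For any $M\geq 1$ I decompose
$$\E[|Q_N(T)|]=\E\left[|Q_N(T)|\,\mathbbm{1}_{\{\sigma_M=T\}}\right]+\E\left[|Q_N(T)|\,\mathbbm{1}_{\{\sigma_M<T\}}\right].$$
Since each path of $Q_N$ is continuous and $\sigma_M$ is a stopping time, $Q_N(\sigma_M)$ is a well-defined random variable, and on the event $\{\sigma_M=T\}$ one has $Q_N(T)=Q_N(\sigma_M)$ pointwise; hence the first term is bounded by $\E[|Q_N(\sigma_M)|]$.

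For the second term I would use the uniform second-moment bound together with the Cauchy--Schwarz inequality:
$$\E\left[|Q_N(T)|\,\mathbbm{1}_{\{\sigma_M<T\}}\right]\leq \left(\E[|Q_N(T)|^2]\right)^{1/2}\bP(\sigma_M<T)^{1/2}\leq C^{1/2}\,\bP(\sigma_M<T)^{1/2}.$$
Combining the two estimates yields the master inequality
$$\E[|Q_N(T)|]\leq \E[|Q_N(\sigma_M)|]+C^{1/2}\,\bP(\sigma_M<T)^{1/2},$$
valid for every $M,N\geq 1$. The decisive feature here is that the second summand does not depend on $N$, which is exactly the control supplied by the uniform $L^2$-hypothesis.

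To conclude I run the usual diagonal selection. Because $\{\sigma_M\}_M$ increases to $T$, the events $\{\sigma_M<T\}$ decrease with $\bigcap_M\{\sigma_M<T\}=\{\sup_M\sigma_M<T\}$, a null set, so by continuity of measure $\bP(\sigma_M<T)\to 0$ as $M\to\infty$. I therefore first choose $M$ so large that $C^{1/2}\bP(\sigma_M<T)^{1/2}<\varepsilon/2$, and then, with this $M$ now fixed, invoke the hypothesis $\E[|Q_N(\sigma_M)|]\to 0$ to pick $N_0$ with $\E[|Q_N(\sigma_M)|]<\varepsilon/2$ for all $N\geq N_0$. The master inequality then gives $\E[|Q_N(T)|]<\varepsilon$ for $N\geq N_0$, proving $\E[|Q_N(T)|]\to 0$.

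The argument is essentially routine; the only points requiring care, and hence the nearest thing to an obstacle, are the order of quantifiers and the role of the moment bound. The stopping level $M$ must be frozen \emph{before} $N$ is sent to infinity, since the convergence $\E[|Q_N(\sigma_M)|]\to 0$ is asserted only for each fixed $M$; and the uniform bound $\sup_N\E[|Q_N(T)|^2]<\infty$ is precisely what converts the vanishing probability $\bP(\sigma_M<T)$ into a uniform-in-$N$ control of the tail contribution (a uniform-integrability mechanism). Path continuity is used only to make sense of $Q_N(\sigma_M)$ and to identify it with $Q_N(T)$ on $\{\sigma_M=T\}$; no modulus-of-continuity estimate, uniform or otherwise, is needed.
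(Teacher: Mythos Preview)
Your proof is correct. The paper does not actually prove this lemma; it simply quotes it as a classical result from \cite[Proposition~B.3]{breckner1999approximation}, so there is no in-paper argument to compare against. Your uniform-integrability splitting via $\{\sigma_M=T\}$ versus $\{\sigma_M<T\}$, together with Cauchy--Schwarz and the monotone convergence $\bP(\sigma_M<T)\to 0$, is exactly the standard route and matches the intended use of the hypotheses.
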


\section{Well-posedness}\label{sec well posed}
In this section we will show \autoref{thm well psd qg vorticity} holds following a classical approach by means of the Galerkin approximation.
\subsection{Galerkin Approximation and Limit Equations} \label{Sec Galerkin approximation}
As described in Section \ref{sec model}, let $\{e_i\}_{i\in \mathbb{N}}$ be the orthonormal basis of $L^2(\mathcal{D})$ made by eigenvectors of $-\Delta$ and $\lambda_i$ the corresponding eigenvalues, $\lambda_i$ are positive and non decreasing. 
Given $\mathcal{H}^N=\operatorname{span} \{e_1,\dots,\ e_N\}\subseteq L^2(\mathcal{D})$ let $P^N:L^2(\mathcal{D})\rightarrow L^2(\mathcal{D})$ be the orthogonal projector of $L^2(\mathcal{D})$ on $\mathcal{H}^N.$  
As we are looking for a finite dimensional approximation of the solution of equation (\ref{eq:QG_ito}), define
$$ q_j^N(t):=\sum_{i=1}^N c_{i,j,N}(t)e_i(x).$$ 
The coefficients $c_{i,j,N}$ are chosen in such a way to satisfy for all eigenfunctions $e_i,\ \ 1\leq i\leq N$ and all $t\in [0,T]$ the system
\begin{align}
    \label{weak formulation Galerkin vorticity}
    \begin{split}
    \left\langle q_1^N(t)  ,e_i\right\rangle  &  =\left\langle
q_{1,0}^N,e_i\right\rangle +\int_{0}^{t}\left\langle \Delta \psi_1^N(s)
, \nu\Delta  e_i\right\rangle ds+\int_0^t \langle q_1^N(s), F_1^N e_i\rangle\ ds+\int_0^t \langle q_1^N(s), \nabla^{\perp}\psi_1^N(s)\cdot\nabla e_i\rangle ds \\
&  +\int_{0}^{t}\left\langle \beta \partial_x\psi_1^N\left (s\right)  ,e_i\right\rangle
ds +\int_{0}^{t}\left\langle F(s)  ,e_i\right\rangle
ds-\sum_{k=1}^K\int_{0}^{t}\left\langle q_1^N(s)  ,G^{k}_1 e_i\right\rangle dW_{s}^{1,k}\\ 
\left\langle q_2^N(t)  ,e_i\right\rangle  &  =\left\langle
q_{2,0}^N,e_i\right\rangle +\int_{0}^{t}\left\langle \Delta \psi_2^N(s)
, \nu\Delta  e_i\right\rangle ds+\int_0^t \langle q_2^N(s), F_2^N e_i\rangle\ ds+\int_0^t \langle q_2^N(s), \nabla^{\perp}\psi_2(s)\cdot\nabla e_i\rangle ds \\
&  +\int_{0}^{t}\left\langle \beta \partial_x\psi_2^N\left (s\right)  ,e_i\right\rangle
ds -r\int_{0}^{t}\left\langle \Delta \psi_2^N(s)  ,e_i\right\rangle
ds-\sum_{k=1}^K\int_{0}^{t}\left\langle q_2^N(s)  ,G^{k}_2 e_i\right\rangle dW_{s}^{2,k} .
    \end{split}
\end{align}
Here $\q^N_0=P^N \q_0$ and the variables $\bfpsi^N$ and $\q^N$ are linked by relation (\ref{eq:relation_q_psi}), and the operators $F_j^N$, $i = 1,2$, are defined similarly to \eqref{eq:F_j(q)}, namely 
\begin{equation*}
    F_j^N\phi=\frac{1}{2}\sum_{k=1}^K P^N\left(\bm{\sigma}_{j,k} \cdot \nabla  P^N(\bm{\sigma}_{j,k}\cdot \nabla \phi)\right),\ j\in \{1,2\} \ \fa \phi \in \mathcal{H}^N.
\end{equation*}
The local well-posedness of \eqref{weak formulation Galerkin vorticity} follows from classical results about stochastic differential equations with locally Lipshitz coefficients, see for example \cite{karatzas2012brownian},\cite{skorokhod1982studies}. The global well-posedness follows from the following a priori estimates:
\begin{lemma}
\label{stime energia} 
    Given the system \eqref{weak formulation Galerkin vorticity} the following energy estimate holds:
    \begin{align}\label{ITO}
        \frac{d \vertiii{\q^N} ^2}{2}+\nu \vertiii{\nabla \q^N}^2 dt& =\left(-\beta h_1\langle \partial_x \psi_1^N,q_1^N\rangle-\beta h_2\langle \partial_x \psi_2^N,q_2^N\rangle+h_1\langle F,q_1^N\rangle-rh_2 \lVert q_2^N\rVert^2+Sr\langle \psi_1^N-\psi_2^N,q_2^N\rangle\right) dt\notag\\ & +\left(S\nu\lVert q_1^N-q_2^N\rVert^2+S\nu(S_1+S_2)\langle \psi_1^N-\psi_2^N,q_1^N-q_2^N\rangle\right)\, dt.
    \end{align}
    Furthermore for any $\q_0\in L_{\cF_0}^4(\Omega, \cH)$ and $F\in L_{\cF}^4(0, T; L^2(\cD))$ the following \textit{a priori} and integral bounds are satisfied uniformly in $N\in \N$
    \begin{align}
        \E \left[\sup_{t\in [0,T]}\vertiii{\mathbf{q}^N(t)} ^2\right]\leq  \E \left[\vertiii{\mathbf{q}_0}^2 +2\int_0^T \lVert F(s)\rVert^2 ds\right]  e^{CT} \label{aprioriL2}\\
         \nu\E \left[\int_0^T \vertiii{\nabla \mathbf{q}^N(s)}^2\ ds\right]\leq \E \left[\int_0^T \lVert F(s)\rVert^2 \ ds\right]+CT  \E \left[\vertiii{\mathbf{q}_0}^2 +2\int_0^T \lVert F\rVert^2 ds\right]  e^{CT} \label{integralL2}\\
       \mathbb{E}\left[\operatorname{sup}_{t\in [0,T]}\vertiii{\mathbf{q}^N(t)} ^4\right]+\mathbb{E}\left[\int_0^T \vertiii{\mathbf{q}^N(s)}^2\vertiii{\nabla \mathbf{q}^N(s)}^2\ ds \right]\leq C(T) \label{energy L4} \\ 
        \E \left[\left(\int_0^T \vertiii{\nabla\mathbf{q}^N(s)}^2\ ds \right)^2\right]\leq C(T).\label{energy strana}
    \end{align}
    where $C$ is a constant possibly changing its value line by line, but independent of $N$.
    
\end{lemma}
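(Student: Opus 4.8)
The plan is to base everything on the single energy identity \eqref{ITO}, from which all four bounds \eqref{aprioriL2}--\eqref{energy strana} follow by Gronwall's lemma together with Young's and Cauchy--Schwarz inequalities. The decisive structural feature, which I would establish first, is that the energy balance is \emph{pathwise}: the stochastic integrals produced by the transport noise drop out entirely, so no Burkholder--Davis--Gundy argument is needed and the whole lemma reduces to deterministic ODE-type comparisons carried out $\bP$-a.s. before taking expectations.

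To derive \eqref{ITO} I would test the $i$-th Galerkin equation in \eqref{weak formulation Galerkin vorticity} against $c_{i,j,N}$, sum over $i$ so that $e_i$ is effectively replaced by $q_j^N$, weight the two layers by $h_1$ and $h_2$, and apply It\^o's formula to $\vertiii{\q^N}^2 = h_1\|q_1^N\|^2 + h_2\|q_2^N\|^2$. Three cancellations drive the computation. First, the advection term vanishes because $\langle q_j^N, \nabla^{\perp}\psi_j^N\cdot\nabla q_j^N\rangle = -\tfrac12\int_{\cD}\operatorname{div}(\nabla^{\perp}\psi_j^N)\,(q_j^N)^2\,d\x = 0$, using that $\bfpsi^N$ is the exact image of $\q^N$ under $(-\bm{\Delta}-M)^{-1}$, which preserves each spatial mode, so $\psi_j^N\in\mathcal H^N$. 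Second, the transport noise is enstrophy neutral: the stochastic integrals carry the integrand $\langle q_j^N, \bm{\sigma}_{j,k}\cdot\nabla q_j^N\rangle$, which is again zero by the divergence-free property, while the It\^o correction satisfies $2\langle q_j^N, F_j^N q_j^N\rangle = -\sum_k\|P^N(\bm{\sigma}_{j,k}\cdot\nabla q_j^N)\|^2$, exactly cancelling the quadratic variation $\sum_k\|P^N(\bm{\sigma}_{j,k}\cdot\nabla q_j^N)\|^2$ coming from the martingale terms. Third, the viscous term is rewritten through $q_j = \Delta\psi_j + S_j(\psi_{3-j}-\psi_j)$: using \eqref{eq:defS} and the difference relation \eqref{eq:difference relation} one peels off the clean dissipation $-\nu\vertiii{\nabla\q^N}^2$ together with the two cross-layer terms $S\nu\|q_1^N - q_2^N\|^2$ and $S\nu(S_1+S_2)\langle\psi_1^N - \psi_2^N, q_1^N - q_2^N\rangle$ on the right-hand side; the friction term is treated identically, producing $-rh_2\|q_2^N\|^2 + Sr\langle\psi_1^N-\psi_2^N,q_2^N\rangle$.

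With \eqref{ITO} established I would estimate its right-hand side. Via the equivalence $\|\bfpsi\|_{\bH^{k+2}}\lesssim\|\q\|_{\bH^k}$ and \eqref{eq:difference relation}, every term except the forcing is $\lesssim\vertiii{\q^N}^2$ with a constant depending only on the model parameters, while $h_1\langle F,q_1^N\rangle$ is split by Young's inequality; dropping the nonpositive friction term yields, $\bP$-a.s., the differential inequality $d\vertiii{\q^N}^2 + 2\nu\vertiii{\nabla\q^N}^2\,dt \le \left(C\vertiii{\q^N}^2 + 2\|F\|^2\right)dt$. A pathwise Gronwall argument followed by $\sup_{t\in[0,T]}$, expectation, and the projection bound $\vertiii{\q_0^N}\le\vertiii{\q_0}$ gives \eqref{aprioriL2}; integrating the inequality over $[0,T]$ while retaining the dissipation on the left, then inserting \eqref{aprioriL2}, gives \eqref{integralL2}. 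For the quartic estimates I would apply It\^o's formula to $\vertiii{\q^N}^4=\big(\vertiii{\q^N}^2\big)^2$; since the balance carries no martingale part, $d\vertiii{\q^N}^4 + 4\nu\vertiii{\q^N}^2\vertiii{\nabla\q^N}^2\,dt$ equals $4\vertiii{\q^N}^2$ times the drift in \eqref{ITO}, and after the same bounds together with $\vertiii{\q^N}^2\|F\|^2\le\vertiii{\q^N}^4 + \tfrac14\|F\|^4$, a further Gronwall step and the hypotheses $\q_0\in L^4_{\cF_0}(\Omega,\cH)$, $F\in L^4_{\cF}(0,T;L^2(\cD))$ give \eqref{energy L4}. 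Finally \eqref{energy strana} follows by solving \eqref{ITO} for $\int_0^T\vertiii{\nabla\q^N}^2\,ds$, squaring, and bounding $\big(\int_0^T\vertiii{\q^N}^2\,ds\big)^2\le T\int_0^T\vertiii{\q^N}^4\,ds$ through \eqref{energy L4} (the forcing contribution being controlled by $F\in L^4$).

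I expect the main obstacle to be the exact bookkeeping of the cancellations in \eqref{ITO} in the Galerkin setting, in particular checking that the projections $P^N$ sitting inside $F_j^N$ match the quadratic variation of the truncated noise, and reorganising the cross-layer viscous and friction contributions, via \eqref{eq:relation_q_psi}, \eqref{eq:defS} and \eqref{eq:difference relation}, into precisely the combination appearing on the right-hand side; once this identity is in place, all four estimates are routine.
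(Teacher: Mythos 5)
Your proposal is correct and follows essentially the same route as the paper: the pathwise energy identity \eqref{ITO} obtained via It\^{o}'s formula (martingale terms vanishing by the divergence-free structure, the It\^{o} correction $2\langle q_j^N,F_j^N q_j^N\rangle$ cancelling the quadratic variation $\sum_k\lVert P^N(\bm{\sigma}_{j,k}\cdot\nabla q_j^N)\rVert^2$), the rewriting of the viscous and friction terms through \eqref{eq:relation_q_psi}, \eqref{eq:defS} and \eqref{eq:difference relation}, and then Gronwall/Young arguments yielding \eqref{aprioriL2}--\eqref{energy strana}. The only immaterial differences are that you spell out the noise cancellations which the paper leaves implicit, and that for \eqref{energy strana} you bound $\left(\int_0^T\vertiii{\q^N(s)}^2\,ds\right)^2\leq T\int_0^T\vertiii{\q^N(s)}^4\,ds$ and invoke \eqref{energy L4}, whereas the paper bounds the same quantity by $\left(T\sup_{t\in[0,T]}\vertiii{\q^N(t)}^2\right)^2$ and re-uses the pathwise Gronwall estimate behind \eqref{aprioriL2}.
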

\begin{proof}
Let us start by showing \eqref{ITO}. We first apply the finite dimensional It\^{o}'s formula to the system (\ref{weak formulation Galerkin vorticity}) to get
\begin{align*}
    \frac{d\lVert q_1^N\rVert^2}{2}=\left( \nu \langle \Delta\psi_1^N,\Delta q_1^N\rangle - \beta\langle \partial_x \psi_1^N,q_1^N\rangle+\langle F,q_1^N\rangle\right)dt\\
    \frac{d\lVert q_2^N\rVert^2}{2}=\left( \nu \langle \Delta\psi_2^N,\Delta q_2^N\rangle-\beta\langle \partial_x \psi_2^N,q_2^N\rangle-r\langle \Delta \psi_2^N,q_2^N\rangle\right)dt,
\end{align*}
 then multiply each equation by $h_1$ and $h_2$ respectively and sum them up to obtain
\begin{multline} \label{eq:energyIto1}
    \frac{d \vertiii{\mathbf{\mathbf{q}^N}}^2}{2} = \left(h_1\nu \langle \Delta\psi_1^N,\Delta q_1^N\rangle +h_2 \nu \langle \Delta\psi_2^N,\Delta q_2^N\rangle\right)\, dt + \left(-\beta h_1\langle \partial_x \psi_1^N,q_1^N\rangle -\beta h_2\langle \partial_x \psi_2^N, q_2^N \rangle  \right)\, dt \\
    \left( + h_1\langle F,q_1^N\rangle - rh_2 \langle \Delta\psi_2^N, q_2^N \rangle \right)\, dt.
\end{multline}
Now, thanks to relations (\ref{eq:relation_q_psi}), \eqref{eq:defS} and \eqref{eq:difference relation} we have
\begin{align*}
    h_1\langle \Delta\psi_1^N,\Delta q_1^N\rangle + h_2 \nu \langle \Delta\psi_2^N,\Delta q_2^N\rangle &= h_1\langle q_1^N + S_1(\psi_1  -\psi_2),\Delta q_1^N\rangle + h_2\langle q_2^N + S_2(\psi_2  -\psi_1),\Delta q_2^N \rangle\\
    &= - \vertiii{ \nabla \q^N}^2 + S\langle \Delta\psi_1^N - \Delta\psi_2^N, q_1^N - q_2^N\rangle \\
    &= - \vertiii{ \nabla \q^N}^2 + S\| q^N_1 - q^N_2\|^2 + S(S_1 + S_2)\langle \psi_1^N - \psi_2^N , q_1^N - q_2^N\rangle .
\end{align*}
Using \eqref{eq:relation_q_psi} to treat similarly the term $rh_2 \langle \Delta\psi_2^N, q_2^N \rangle $, from \eqref{eq:energyIto1} we get the desired result
\begin{align*}
    \frac{d \vertiii{\q^N}^2}{2}+\nu\vertiii{\nabla \q^N}^2 dt & =\left(-\beta h_1\langle \partial_x \psi_1^N,q_1^N\rangle-\beta h_2\langle \partial_x \psi_2^N,q_2^N\rangle+h_1\langle F,q_1^N\rangle-rh_2 \lVert q_2^N\rVert^2+Sr\langle \psi_1^N-\psi_2^N,q_2^N\rangle\right) dt\\ & +\left(S\nu\lVert q_1^N-q_2^N\rVert^2+\nu S(S_1+S_2)\langle \psi_1^N-\psi_2^N,q_1^N-q_2^N\rangle\right)dt.
\end{align*}
Now, by Cauchy-Schwartz and exploiting the continuity of the operator 
 \begin{equation*}
      (-\mathbf{\Delta}-M)^{-1}: \mathbf{H}^s\rightarrow  \mathbf{H}^{s+2}, \text{ where } M= \begin{pmatrix}
-S_1& S_1 \\
S_2 & -S_2
\end{pmatrix},
 \end{equation*}
     it follows that there exists a constant $C$ independent of $N$ such that 
     \begin{equation}
         \label{eq:sec4 galerkin 1}
         \frac{d \vertiii{\mathbf{q}^N}^2}{2}+\nu\vertiii{\nabla \mathbf{q}^N}^2 dt\leq\left( C \vertiii{\mathbf{q}^N}^2+ \lVert F\rVert^2\right) dt.
     \end{equation}
    Thus, via Gronwall's inequality and exploiting the fact that $P^N$ are projections on $L^2(\mathcal{D})$, we have that 
    \begin{align*}
        \vertiii{\mathbf{q}^N(t)}^2 \leq \left(\vertiii{\mathbf{q}_0}^2 +2\int_0^t \lVert F(s)\rVert^2 ds \right) e^{Ct},
    \end{align*}
    hence \eqref{aprioriL2} holds taking the supremum over $[0,T]$ and the expectation on both sides. Next, integrating in time \eqref{eq:sec4 galerkin 1} and using the estimate just obtained we have
    \begin{equation*}
        \nu\int_0^T \vertiii{\nabla \mathbf{q}^N(s)}^2\ ds\leq \int_0^T \lVert F(s)\rVert^2 \ ds+ CT \left( \vertiii{\mathbf{q}_0}^2 +2\int_0^T \lVert F(s)\rVert^2 ds \right)e^{CT} .
    \end{equation*}
    from which follows \eqref{integralL2}.
    
    %
    Moving on to \eqref{energy L4}, since $d\vertiii{\mathbf{q}^N} ^4=2\vertiii{\mathbf{q}^N}^2d\vertiii{\mathbf{q}^N}^2$, exploiting the energy estimate (\ref{ITO}) we obtain
    \begin{multline}\label{eq:proof energyL4 1}
        d\vertiii{\mathbf{q}^N}^4+4\nu \vertiii{\mathbf{q}^N} ^2 \vertiii{\nabla \mathbf{q}^N} ^2 \, dt = 4\vertiii{\mathbf{q}^N}^2\left(-\beta h_1\langle \partial_x \psi_1^N,q_1^N\rangle-\beta h_2\langle \partial_x \psi_2^N,q_2^N\rangle+h_1\langle F,q_1^N\rangle\right)\, dt +\\
        4\vertiii{\mathbf{q}^N}^2\left(-rh_2 \lVert q_2^N\rVert^2+Sr\langle \psi_1^N-\psi_2^N,q_2^N\rangle\right) \, dt  +4\vertiii{\mathbf{q}^N}^2\left(S\nu\lVert q_1^N-q_2^N\rVert^2+S\nu(S_1+S_2)\langle \psi_1^N-\psi_2^N,q_1^N-q_2^N\rangle\right)\, dt.
    \end{multline}
    As argued for \eqref{aprioriL2}, by appropriate use of Cauchy-Schwartz inequality and of the continuity of the operator $(- \Delta - M)^{-1}$, we can estimate the right hand side of \eqref{eq:proof energyL4 1} as follows
    \begin{align}
         d\vertiii{\mathbf{q}^N}^4+4\nu \vertiii{\mathbf{q}^N} ^2 \vertiii{\nabla \mathbf{q}^N} ^2 \, dt  &\leq \left(C \vertiii{\mathbf{q}^N}^4+C\lVert F\rVert \vertiii{\mathbf{q}^N}^3\right)\, dt \nonumber\\
        & \leq \left(C \vertiii{\mathbf{q}^N}^4+\lVert F\rVert^4\right)\ dt. \label{eq:proof energyL4 2}
    \end{align}
    Thus, via Gronwall's inequality and the properties of the projection $P^N$, we get
    \begin{align*}
        \vertiii{\mathbf{q}^N(t)}^4 \leq \left(\vertiii{\mathbf{q}_0}^4+ \int_0^t \lVert F(s)\rVert^4 \ ds\right)e^{Ct}.
    \end{align*}
  From this relation it follows immediately that 
  \begin{align*}
        \E \left[\operatorname{sup}_{t\in [0,T]}\vertiii{\mathbf{q}^N(t)}^4\right]+\E \left[\int_0^T \vertiii{\mathbf{q}^N(s)}{^2} \vertiii{ \nabla\mathbf{q}^N(s)}^2\ ds \right]\leq C
    \end{align*}
    where $C= C(T)$ is a constant dependent on the time $T$ but crucially independent of $N$.
    
    Finally, to prove \eqref{energy strana} we integrate \eqref{eq:sec4 galerkin 1} over $[0,T]$ to get 
    \begin{equation*}
        \nu \int_0^T \vertiii{\nabla \q^N(s)}^2 \, ds \leq \tfrac{1}{2}\vertiii{\q_0}^2 + C\int_0^T \vertiii{\q^N(s)}^2 \, ds + \int_0^T \| F(s)\|^2\, ds.
    \end{equation*}
    Squaring and simply bounding the right hand side we have 
    \begin{align*}
        \nu^2\left(\int_0^T \vertiii{\nabla \mathbf{q}^N(s)}^2\ ds\right)^2\leq \tfrac{3}{2}\vertiii{\mathbf{q}_0} ^4+ 3\left(C T \sup_{t\in [0,T]} \vertiii{\mathbf{q}^N(s)} ^2 \right)^2+ 3\left(\int_0^T \lVert F(s)\rVert^2 \ ds\right)^2.
    \end{align*}
    By \eqref{aprioriL2} we have 
     \begin{align*}
        \nu^2\left(\int_0^T \vertiii{\nabla \mathbf{q}^N(s)}^2\ ds\right)^2\leq \tfrac{3}{2}\vertiii{\mathbf{q}_0} ^4+ 3\left[ C T e^{CT}\left( \vertiii{\q_0}^2 + 2\int_0^T\| F(s)\|^2\, ds \right) \right]^2+ 3\left(\int_0^T \lVert F(s)\rVert^2 \ ds\right)^2.
    \end{align*}
    Then taking the expectation on both side, \eqref{energy strana} holds with constant $C$ depending on $T$, $\mathbb{E}\left[\vertiii{\q_0}^4\right]$ and $\E\left[\|F\|^4_{L^2(0,T;\cH)}\right]$.  
    
\end{proof}
%
%
From the energy estimates for $\mathbf{q}^N$ shown in \autoref{stime energia}, it follows that there exists a subsequence, which we will denote again for simplicity by $\mathbf{q}^N$, converging to a stochastic process $\mathbf{q}$ $*$-weakly in $L^4(\Omega; L^{\infty}(0,T; \cH)) $, and weakly in  $L^4(\Omega; L^{2}(0,T; \cV))$. 
Furthermore there exist two unknown processes, $B_1^*,\ B_2^* $ such that 
\begin{equation}
\label{eq:defB12*}
    \nabla^{\perp}\psi_{j}^N\cdot\nabla q_j^{N}{\rightharpoonup} B^*_j \ \text{ in  } L^2(\Omega; L^{2}(0,T;H^{-1}(\mathcal{D})),\ \ j\in \{1,2\}.
\end{equation}
Moreover, thanks to the converging properties of the projector $P^N$ for $N\rightarrow+\infty$, the processes $\mathbf{q}$ and $B_j^*,\ j\in \{1,2\}$ satisfies $\bP$-a.s.\ for each $i\in \mathbb{N}$ and $t\in [0,T]$
\begin{align}\label{limit equation}
\begin{split}
    \left\langle q_1(t)  ,e_i\right\rangle+\int_0^t \langle B_1^*(s), e_i\rangle_{H^{-1},H^1} ds  &  =\left\langle
    q_{1,0},e_i\right\rangle +\int_{0}^{t}\left\langle \Delta \psi_1(s)
    , \nu\Delta  e_i\right\rangle ds+\int_0^t \langle q_1(s), F_1 e_i\rangle\ ds \\
    &  +\int_{0}^{t}\left\langle \beta \partial_x\psi_1\left (s\right)  ,e_i\right\rangle
    ds +\int_{0}^{t}\left\langle F(s)  ,e_i\right\rangle
    ds-\sum_{k=1}^K\int_{0}^{t}\left\langle q_1(s)  ,G^k_1 e_i\right\rangle dW_{s}^{1,k}\\ 
    \left\langle q_2(t)  ,e_i\right\rangle+\int_0^t \langle B_2^*(s), e_i\rangle_{H^{-1},H^1} ds  &  =\left\langle
    q_{2,0},e_i\right\rangle +\int_{0}^{t}\left\langle \Delta \psi_2(s)
    , \nu\Delta  e_i\right\rangle ds+\int_0^t \langle q_2(s), F_2 e_i\rangle\ ds \\
    &  +\int_{0}^{t}\left\langle \beta \partial_x\psi_2\left (s\right)  ,e_i\right\rangle
    ds -r\int_{0}^{t}\left\langle \Delta \psi_2(s)  ,e_i\right\rangle
    ds-\sum_{k=1}^K\int_{0}^{t}\left\langle q_2(s)  ,G^k_2 e_i\right\rangle dW_{s}^{2,k}.%
\end{split}
\end{align}
%
Let us expand on the convergence of the nonlinear term 
$$\int_0^t \langle q_j^N(s),F^N_j e_i\rangle \,ds.$$
We know that for each $\alpha\geq 0$ and $x\in D((-\Delta)^{\alpha}),\ \lVert P^N x - x\rVert_{D((-\Delta)^{\alpha})}$ vanishes on the limit $N\to \infty$. Thus, since for each $k\in K$, $\beta\geq 1/2$, the operator $\bm{\sigma}_{j,k}\cdot (\nabla \cdot)$ is in $L(D((-\Delta)^{\beta}),D((-\Delta)^{\beta-1/2}))$, then for $\phi\in D((-\Delta)^{\beta})$, $\lVert P^N(\bm{\sigma}_{j,k}\cdot \phi)- \bm{\sigma}_{j,k}\cdot \phi\rVert_{D((-\Delta)^{\beta-1/2})}$ converges to zero for increasing $N$. Starting from these observations it is easy to show that for each $i,\ \lVert F^N_j e_i-F_j e_i\rVert\rightarrow 0$. 
Then, thanks to the weak convergence of $q_j^N$ to $q_j$, we have the required convergence of $\int_0^t \langle q_j^N(s),F^N_j e_i\rangle ds$.

For what concern the continuity in $\cH$ of the process $\textbf{q}$ we can argue in the following way via It\^{o}'s formula and Kolmogorov continuity theorem. From the weak formulation above we get the weak continuity in $\cH$ of $\mathbf{q}$ applying the Kolmogorov continuity theorem for \eqref{limit equation}. Then, applying the It\^{o}'s formula to $\vertiii{\mathbf{q}(t)}^2$ we get, arguing as in the proof of Lemma~\ref{stime energia},
\begin{align*}
    \frac{d \vertiii{\mathbf{q}} ^2}{2}&=-\nu \vertiii{\nabla \mathbf{q}}^2 dt\left(-\beta h_1\langle \partial_x \psi_1,q_1\rangle-\beta h_2\langle \partial_x \psi_2,q_2\rangle+h_1\langle F,q_1\rangle-rh_2 \lVert q_2\rVert^2+Sr\langle \psi_1-\psi_2,q_2\rangle\right) dt\notag\\ & +\left(S\nu\lVert q_1-q_2\rVert^2+S\nu(S_1+S_2)\langle \psi_1-\psi_2,q_1-q_2\rangle\right)dt-\left(h_1\langle B^*_1,q_1\rangle+h_2\langle B^*_2,q_2\rangle \right) dt
\end{align*}
From this, we get the continuity of $\lVert \mathbf{q}(t)\rVert^2$ thanks to the integrability properties of $\mathbf{q}$. Weak continuity and continuity of the norm implies strong continuity, thus we have the strong continuity of $\mathbf{q}$ as a process taking values in $\cH$.  
Alternatively the strong continuity in $\cH$ of $\mathbf{q}$ follows from the results in \cite{pardoux1975equations}.

\subsection{Existence, Uniqueness and Further Results}
\label{subsec:existence}

First, it is easy to show that the solution of \eqref{Ito qg}, when it exists, it is unique. 
\begin{theorem}
    There is at most one weak solution of problem (\ref{eq:QG_ito}) in the sense of Definition~\ref{weak solution qg}.
\end{theorem}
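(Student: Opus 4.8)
The plan is to prove uniqueness by a standard energy argument applied to the difference of two solutions. Suppose $\q^{(1)}$ and $\q^{(2)}$ are two weak solutions of \eqref{eq:QG_ito} with the same initial datum $\q_0$ and forcing $F$, and set $\bfv = \q^{(1)}-\q^{(2)}$, with corresponding streamfunctions $\bfphi = \bfpsi^{(1)}-\bfpsi^{(2)}$ linked to $\bfv$ through \eqref{eq:relation_q_psi}. Because the equation is linear in the noise terms and in the linear drift, and because both solutions share the same initial condition and forcing, the stochastic integrals and the additive forcing cancel in the difference, while the bilinear transport nonlinearity $\nabla^\perp\bfpsi\cdot\nabla\q$ produces a difference that I would split, for each layer $j$, as
\[
\nabla^{\perp}\psi_j^{(1)}\cdot\nabla q_j^{(1)} - \nabla^{\perp}\psi_j^{(2)}\cdot\nabla q_j^{(2)} = \nabla^{\perp}\phi_j\cdot\nabla q_j^{(1)} + \nabla^{\perp}\psi_j^{(2)}\cdot\nabla v_j .
\]

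First I would apply the It\^o formula to $\vertiii{\bfv(t)}^2$ exactly as in the proof of \autoref{stime energia}. The key point is that the transport-noise terms $G^k_j$ are multiplicative and \emph{linear}, so the martingale part is linear in $\bfv$ and the It\^o corrections $F_j$ together with the quadratic variation combine to preserve (rather than grow) the $\bLtwo$-norm; this is precisely the enstrophy-conservation property of the noise highlighted in the second remark. Consequently, after taking expectations the stochastic contributions should vanish or be controlled, leaving the linear dissipative terms (the $\nu\mathbf{\Delta}^2$ viscosity and bottom friction $r$) on the left and the nonlinear difference on the right. The linear terms involving $\beta$, $r$, and the coupling matrix $M$ are handled exactly as in \autoref{stime energia} via Cauchy--Schwarz and continuity of $(-\mathbf{\Delta}-M)^{-1}$, contributing a term bounded by $C\vertiii{\bfv}^2$.

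The crux is estimating the nonlinear difference, for which I would use the test-function structure of the weak formulation together with \autoref{preliminary non linear}. The divergence-free cancellation $\langle \nabla^\perp\psi_j^{(2)}\cdot\nabla v_j, v_j\rangle = 0$ kills the second piece of the splitting, so only $\langle \nabla^\perp\phi_j\cdot\nabla q_j^{(1)}, v_j\rangle$ remains. Using part 4 (or parts 1--2) of \autoref{preliminary non linear} to bound this in terms of $\|\bfphi\|$ (equivalently $\vertiii{\bfv}$, via $\bfphi = \tilde A^{-1}(\text{lower order})$ and the relation $\bfv=(\mathbf{\Delta}+M)\bfphi$) against the $\cV$-norm of $\bfv$ and a suitable norm of $\q^{(1)}$, followed by Young's inequality, I would absorb a fraction of $\nu\vertiii{\nabla\bfv}^2$ into the dissipation and collect the rest as $C(t)\vertiii{\bfv}^2$, where $C(t)$ depends on $\vertiii{\nabla\q^{(1)}}^2$ and is integrable in time thanks to the a priori bound \eqref{integralL2}. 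This yields, $\bP$-a.s.,
\[
\ddt\,\E\!\left[\vertiii{\bfv(t)}^2\right] \leq C(t)\,\E\!\left[\vertiii{\bfv(t)}^2\right],
\]
and Gr\"onwall's inequality with $\bfv(0)=0$ forces $\bfv\equiv 0$.

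The main obstacle I anticipate is making the nonlinear estimate close at the regularity of a weak solution: the natural pairing $\langle\nabla^\perp\phi_j\cdot\nabla q_j^{(1)}, v_j\rangle$ requires controlling a product of $\q^{(1)}$ (only in $\cV$ for a.e.\ $t$) against $\bfv$, and getting the exponent bookkeeping right in \autoref{preliminary non linear} so that everything is dominated by $\nu\vertiii{\nabla\bfv}^2 + C(t)\vertiii{\bfv}^2$ with $\int_0^T C(t)\,dt<\infty$ is delicate. A secondary subtlety is justifying the It\^o formula for $\vertiii{\bfv}^2$ rigorously at this regularity level (rather than formally), which typically needs a mollification or reference to the generalized It\^o formula of \cite{pardoux1975equations}; I would invoke that result, as the authors already do for the strong continuity of $\q$, to legitimize the energy identity.
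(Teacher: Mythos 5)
Your overall strategy coincides with the paper's own proof: It\^o's formula applied to the difference of two solutions, cancellation of the noise contributions (enstrophy preservation of the transport noise), the identical splitting of the nonlinearity with the divergence-free cancellation killing $\langle\nabla^\perp\psi^{(2)}_j\cdot\nabla v_j, v_j\rangle$, a Ladyzhenskaya--Young estimate on the surviving term $\langle\nabla^\perp\phi_j\cdot\nabla q_j^{(1)}, v_j\rangle$ that absorbs part of the dissipation, and finally Gronwall. The step that fails as written is the last one. The coefficient $C(t)$ your estimate produces is \emph{random} --- it is, up to constants, $\vertiii{\nabla\q^{(1)}(t)}^2$ --- so the displayed inequality
\[
\ddt\,\E\left[\vertiii{\bfv(t)}^2\right]\leq C(t)\,\E\left[\vertiii{\bfv(t)}^2\right]
\]
is not meaningful (both sides of an inequality between expectations are deterministic, while $C(t)$ is not, and ``$\bP$-a.s.'' cannot qualify such a relation), and it cannot be derived from the pathwise estimate: $\E\left[C(t)\vertiii{\bfv(t)}^2\right]$ equals neither $C(t)\,\E\left[\vertiii{\bfv(t)}^2\right]$ nor $\E\left[C(t)\right]\E\left[\vertiii{\bfv(t)}^2\right]$, because $C$ and $\bfv$ are correlated. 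Taking expectations \emph{before} Gronwall is precisely the move that breaks down when the Gronwall coefficient is random; this is the standard pitfall in uniqueness proofs for stochastic fluid models.

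The repair is already contained in your own observations, and it is what the paper does. Since the martingale contribution $\sum_k\langle G^k_j v_j, v_j\rangle \, dW^{j,k}$ vanishes identically (the $\bm{\sigma}_{j,k}$ are divergence free) and the It\^o correction cancels exactly against the quadratic variation, the energy inequality
\[
\frac{d}{dt}\vertiii{\bfv(t)}^2 \leq C(t)\,\vertiii{\bfv(t)}^2 ,
\qquad \int_0^T C(s)\,ds<\infty \as
\]
holds \emph{pathwise}, with no expectation taken and no stochastic integral left over; the time-integrability holds because $\q^{(1)}\in L^2(0,T;\cV)$ almost surely. One can therefore apply Gronwall $\omega$ by $\omega$ and conclude $\vertiii{\bfv(t)}^2\leq \vertiii{\bfv(0)}^2\exp\left(\int_0^t C(s)\,ds\right)=0$ a.s. The paper implements exactly this through the weight $R(t)=\exp\left(-\eta\int_0^t\vertiii{\nabla\q(s)}^2\,ds\right)$: computing $d\left(R(t)\vertiii{\bfv(t)}^2\right)$ and choosing $\eta$ large enough converts the random coefficient into a deterministic constant, after which Gronwall applies directly. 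So you should drop the expectation step and insist that the noise terms cancel pathwise (not merely ``after taking expectations''), since that pathwise cancellation is what licenses the pathwise Gronwall argument and closes the proof.
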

\begin{proof}
    Let $\mathbf{q}, \ \tilde{\mathbf{q}}$ be two solutions and let $\mathbf{v}$ be their difference. Let $\bfpsi,\ \tilde{\bfpsi}$ be the corresponding streamfunctions and $\bm{\chi}$ be their difference. Let us consider the difference between two weak solutions and applying It\^{o}'s formula, which can be justified arguing as in the proof of Proposition~\ref{Ito qg} below, we get 
    \begin{align*}
        \frac{d\lVert v_1\rVert^2}{2}&=\left(\nu\langle \Delta \chi_1,\Delta v_1\rangle-\langle \nabla^{\perp}\psi_1\cdot\nabla q_1,v_1\rangle+\langle \nabla^{\perp}\tilde{\psi}_1\cdot\nabla\tilde{q}_1,v_1\rangle_{H^{-1},H^1}-\beta \langle \partial_x \chi_1,v_1\rangle \right) dt\\
        \frac{d\lVert v_2\rVert^2}{2}&=\left(\nu\langle \Delta \chi_2,\Delta v_2\rangle-\langle \nabla^{\perp}\psi_2\cdot\nabla q_2,v_2\rangle_{H^{-1},H^1}+\langle \nabla^{\perp}\tilde{\psi}_2\cdot\nabla\tilde{q}_2,v_2\rangle-\beta \langle \partial_x \chi_2,v_2\rangle-r\langle\Delta \chi_2,v_2\rangle \right)dt.
    \end{align*}
    Let us rewrite better $-\langle \nabla^{\perp}\psi_1\cdot\nabla q_1,v_1\rangle_{H^{-1},H^1}+\langle \nabla^{\perp}\tilde{\psi}_1\cdot\nabla\tilde{q}_1,v_1\rangle_{H^{-1},H^1}$, the other one is analogous. 
    \begin{align*}
        & -\langle \nabla^{\perp}\psi_1\cdot\nabla q_1,v_1\rangle_{H^{-1},H^1}+\langle \nabla^{\perp}\tilde{\psi}_1\cdot\nabla\tilde{q}_1,v_1\rangle_{H^{-1},H^1}\pm \langle \nabla^{\perp}\tilde{\psi}_1\cdot\nabla q_1,v_1\rangle_{H^{-1},H^1}\\ & = -\langle \nabla^{\perp}\chi_1\cdot q_1,v_1 \rangle_{H^{-1},H^1}-\langle \nabla^{\perp}\tilde{\psi}_1\cdot \nabla v_1,v_1 \rangle_{H^{-1},H^1}=\langle \nabla^{\perp}\chi_1\cdot \nabla v_1,q_1 \rangle_{H^{-1},H^1}.
    \end{align*}
    Multiplying the equations by $h_1$ and $h_2$ respectively and summing up, if we call $\tilde{\beta}_{1,2}=h_{1,2}\beta$, $\tilde{r}=rh_2$ we get
    \begin{align*}
         \frac{d\vertiii{\mathbf{v}}^2}{2}&=\left(\nu h_1\langle \Delta \chi_1,\Delta v_1\rangle+h_1 \langle \nabla^{\perp}\chi_1\cdot v_1,q_1 \rangle_{H^{-1},H^1}-\tilde{\beta}_1 \langle \partial_x \chi_1,v_1\rangle \right) dt\\ &+\left(\nu h_2\langle \Delta \chi_2,\Delta v_2\rangle+h_2\langle \nabla^{\perp}\chi_2\cdot v_2,q_2 \rangle_{H^{-1},H^1}-\tilde{\beta}_2 \langle \partial_x \chi_2,v_2\rangle-\tilde{r}\langle\Delta \chi_2,v_2\rangle \right)dt =: RHS dt
    \end{align*}
    Let $R(t)=\exp(-\int_0^T \eta \vertiii{\nabla \mathbf{q}(s)}^2 \ ds)$, where $\eta$ is a positive constant fixed below. Therefore we have 
    \begin{align*}
        \frac{d R(t) \vertiii{\mathbf{v}}^2}{2}&=R(t) RHS \, dt-\frac{\eta}{2} \vertiii{\mathbf{v}}^2\vertiii{\nabla \mathbf{q}}^2 R(t) \, dt 
    \end{align*}
    Exploiting relation (\ref{eq:relation_q_psi}) we obtain 
    \begin{align*}
        \frac{d R(t) \vertiii{\mathbf{v}}^2}{2}+\nu R(t) \vertiii{\nabla \mathbf{v}}^2 dt= & -\frac{\eta}{2} \vertiii{\mathbf{v}}^2\vertiii{\nabla \mathbf{q}}^2 R(t) dt  -R(t) \nu S\langle \Delta(\chi_2-\chi_1),v_1-v_2\rangle \ dt\\ & \left(-\tilde{\beta}_1R(t) \langle \partial_x \chi_1,v_1\rangle-\tilde{\beta}_2R(t) \langle \partial_2 \chi_2,v_2\rangle-\tilde{r}R(t)\lVert v_2\rVert^2\right) \,dt\\ & +\left(\tilde{r}R(t) \langle \chi_1-\chi_2,v_2\rangle+h_1R(t) \langle \nabla^{\perp}\chi_1\cdot v_1,q_1 \rangle_{H^{-1},H^1}\right.\\
        &\left.+ h_2R(t) \langle \nabla^{\perp}\chi_2\cdot v_2,q_2 \rangle_{H^{-1},H^1}\right) \,dt.
    \end{align*}
    The terms which comes from the linear part of the equations can be estimated easily, up to some constant $C$, by $R(t)\vertiii{\bfv(t)}^2$, thus we need only to analyze the nonlinear ones. Again we treat only one of the two in order to avoid repetitions.
    \begin{align*}
        \langle \nabla^{\perp}\chi_1\cdot v_1,q_1 \rangle_{H^{-1},H^1} & \leq \lVert \nabla q_1\rVert  \lVert  v_1\rVert_{L^4} \lVert  \nabla^{\perp}\chi_1\rVert_{L^4} \leq C\lVert \nabla q_1\rVert  \vertiii{\mathbf{v}}\vertiii{\nabla \mathbf{v}}\\ & \leq \frac{\nu}{4h_1}\vertiii{\nabla \mathbf{v}} ^2+C\lVert \nabla q_1\rVert^2  \vertiii{\mathbf{v}} ^2.
    \end{align*}
    Therefore, taking $\eta$ large enough we get
    \begin{align*}
        \frac{d R \vertiii{\mathbf{v}}^2 }{2}+\frac{\nu}{2}R\vertiii{\nabla \mathbf{v}}^2 dt\leq  & CR \vertiii{\mathbf{v}}^2 dt
    \end{align*}
    and the thesis follows immediately by Gronwall's Lemma.
\end{proof}

Harder task is to ensure the existence of the solutions of equations (\ref{eq:QG_ito}). To show it we need the following crucial lemma which, for an appropriate stopping time $\tau_M$, ensures convergence of the associated stopped Galerkin process $\q^N$ to the stopped process $\q$. This procedure is standard in stochastic analysis, see for example \cite{breckner2000galerkin}, \cite{razafimandimby2012strong}, \cite{luongo2022inviscid}. 
\begin{lemma}\label{crucial}
    Let $\tau_M=\operatorname{inf}\{t\in [0,T]: \vertiii{\mathbf{q}(t)}^2\geq M\}\wedge \operatorname{inf}\{t\in [0,T]: \int_0^t \vertiii{\mathbf{q}(s)}_{\cV}^2 \ ds\geq M\}\wedge T$, then 
    \begin{align*}
        \mathbbm{1}_{[0, \tau_M]}(\mathbf{q}^N-\mathbf{q})\rightarrow 0,\ \textit{ in } L^2(\Omega,L^2(0,T; \cH))
    \end{align*}
\end{lemma}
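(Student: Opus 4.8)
The plan is to establish the equivalent quantitative bound $\E\int_0^{\tau_M}\vertiii{\mathbf{q}^N(s)-\mathbf{q}(s)}^2\,ds\to 0$, which is exactly the squared norm of $\mathbbm{1}_{[0,\tau_M]}(\mathbf{q}^N-\mathbf{q})$ in $L^2(\Omega;L^2(0,T;\cH))$. Writing $\mathbf{v}=\mathbf{q}^N-\mathbf{q}$, I would not test a difference equation but expand, in the weighted inner product $(\mathbf{a},\mathbf{b})=h_1\langle a_1,b_1\rangle+h_2\langle a_2,b_2\rangle$, the norm $\vertiii{\mathbf{v}}^2=\vertiii{\mathbf{q}^N}^2-2(\mathbf{q}^N,\mathbf{q})+\vertiii{\mathbf{q}}^2$ and compute each term by It\^{o}'s formula (justified as in Proposition~\ref{Ito qg}): the first term is governed by \eqref{ITO}, the third by the It\^{o} formula for $\vertiii{\mathbf{q}}^2$ derived above for the limit process (the one carrying $-h_1\langle B_1^*,q_1\rangle-h_2\langle B_2^*,q_2\rangle$), and the cross term by the It\^{o} product rule applied to \eqref{weak formulation Galerkin vorticity} and \eqref{limit equation}.

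Adding the three identities, the principal viscous terms recombine by bilinearity and by the self-adjointness, with respect to $(\cdot,\cdot)$, of the dissipation form built from $\nu\mathbf{\Delta}^2$ and $(-\mathbf{\Delta}-M)^{-1}$, into $-2\nu\vertiii{\nabla\mathbf{v}}^2$ plus off-diagonal $S$-contributions; these, together with the $\beta$ and $r$ terms, are bounded by $C\vertiii{\mathbf{v}}^2$ using Cauchy--Schwarz and the continuity of $(-\mathbf{\Delta}-M)^{-1}$, exactly as in \autoref{stime energia}. Stopping at $\tau_M$ forces $\vertiii{\mathbf{q}}^2\le M$ and $\int_0^{\cdot}\vertiii{\mathbf{q}}_{\cV}^2\le M$, which together with the uniform bounds \eqref{aprioriL2} and \eqref{integralL2} on $\mathbf{q}^N$ turns every stochastic integral appearing into a genuine martingale, so its expectation vanishes; and $\mathbf{q}_0^N=P^N\mathbf{q}_0\to\mathbf{q}_0$ in $\cH$ makes the initial contribution $\vertiii{\mathbf{v}(0)}^2$ vanish.

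What remains are two families of \emph{source} terms, both of which tend to $0$ as $N\to\infty$. The nonlinearity is the crux: since $\langle\nabla^\perp\psi_j^N\cdot\nabla q_j^N,q_j^N\rangle=0$ by antisymmetry, the nonlinear contribution to $\vertiii{\mathbf{v}}^2$ collapses to
\[
\sum_{j=1}^2 2h_j\Big(\big\langle \nabla^\perp\psi_j^N\cdot\nabla q_j^N-B_j^*,\,q_j\big\rangle+\big\langle B_j^*,\,q_j^N-q_j\big\rangle\Big),
\]
whose time--space--expectation tends to $0$ by the weak convergence \eqref{eq:defB12*} of $\nabla^\perp\psi_j^N\cdot\nabla q_j^N$ to $B_j^*$ in $L^2(\Omega;L^2(0,T;H^{-1}))$ and of $\mathbf{q}^N$ to $\mathbf{q}$ in $L^2(\Omega;L^2(0,T;\cV))$; crucially, no exponential Gronwall weight is needed here (in contrast to the uniqueness proof) precisely because one side of the cross term carries the limit datum $B_j^*$ rather than a genuine nonlinearity. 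The transport-noise terms --- the It\^{o} corrections $\langle F_j^Nq_j^N,q_j\rangle+\langle q_j^N,F_jq_j\rangle$ and the cross quadratic variation $\sum_k\langle P^N(\bm{\sigma}_{j,k}\cdot\nabla q_j^N),\bm{\sigma}_{j,k}\cdot\nabla q_j\rangle$ --- cancel in the limit: using the divergence-freeness of $\bm{\sigma}_{j,k}$, the self-adjointness of $P^N$ and the strong convergence $\|P^N\phi-\phi\|\to0$ already invoked for $F_j^Ne_i$, each converges to $-\tfrac12\sum_k\|\bm{\sigma}_{j,k}\cdot\nabla q_j\|^2$, $-\tfrac12\sum_k\|\bm{\sigma}_{j,k}\cdot\nabla q_j\|^2$ and $+\sum_k\|\bm{\sigma}_{j,k}\cdot\nabla q_j\|^2$ respectively, with vanishing signed sum and an $N$-dependent discrepancy tending to $0$.

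Collecting these, I obtain
\[
\E\big[\vertiii{\mathbf{v}(t\wedge\tau_M)}^2\big]+2\nu\,\E\!\int_0^{t\wedge\tau_M}\!\vertiii{\nabla\mathbf{v}}^2\,ds\le a_N+C\,\E\!\int_0^{t\wedge\tau_M}\!\vertiii{\mathbf{v}}^2\,ds,
\]
where $a_N\to0$ gathers the initial error together with the nonlinear and noise source terms. Gronwall's inequality then yields $\E[\vertiii{\mathbf{v}(t\wedge\tau_M)}^2]\to0$ uniformly on $[0,T]$, and discarding the nonnegative dissipation gives $\E\int_0^{\tau_M}\vertiii{\mathbf{v}}^2\,ds\to0$, which is the claim. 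I expect the main obstacle to be making the source convergence uniform in the upper integration limit: \eqref{eq:defB12*} is only a weak (hence averaged) convergence, so one has to argue, e.g. by the norm-compactness of the family of test functions $\{\mathbbm{1}_{[0,s]}\mathbf{q}\}_{s\in[0,T]}$ in $L^2(\Omega;L^2(0,T;\cV))$, that the pairings converge uniformly in $s$; a secondary difficulty is the careful bookkeeping showing that the It\^{o} corrections of $\mathbf{q}^N$ and $\mathbf{q}$ cancel asymptotically rather than leaving an $O(1)$ residue.
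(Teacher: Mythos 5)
There is a genuine gap, and it sits exactly at the step you identify as the crux. Redo the algebra of your expansion: from \eqref{weak formulation Galerkin vorticity} the nonlinear part of $dq_j^N$ is $-P^N\bigl(\nabla^{\perp}\psi_j^N\cdot\nabla q_j^N\bigr)$, from \eqref{limit equation} that of $dq_j$ is $-B_j^*$, and the paper's It\^{o} formula for $\vertiii{\q}^2$ carries $-2\sum_j h_j\langle B_j^*,q_j\rangle\,dt$. Summing the three contributions, the nonlinear part of $d\vertiii{\mathbf{q}^N-\mathbf{q}}^2$ is
\begin{equation*}
2\sum_{j=1}^2 h_j\Bigl(\bigl\langle \nabla^{\perp}\psi_j^N\cdot\nabla q_j^N,\,P^Nq_j\bigr\rangle+\bigl\langle B_j^*,\,q_j^N-q_j\bigr\rangle\Bigr),
\end{equation*}
which differs from your claimed collapse by one $-\langle B_j^*,q_j\rangle$ per layer --- and that spurious extra term is precisely what makes your source ``vanish''. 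Passing to the limit with the convergences you cite: the second pairing does tend to $0$ (a fixed functional against a weakly null sequence), but the first tends to $\E\int\langle B_j^*,q_j\rangle_{H^{-1},H^1}$, since $P^Nq_j\to q_j$ \emph{strongly} in $L^2(\Omega;L^2(0,T;H^1))$ while $\nabla^{\perp}\psi_j^N\cdot\nabla q_j^N\rightharpoonup B_j^*$ by \eqref{eq:defB12*}. So the nonlinear source converges to $2\sum_j h_j\,\E\int_0^{\cdot\wedge\tau_M}\langle B_j^*,q_j\rangle\,ds$, which is not known to be zero at this stage: it vanishes only once one knows $B_j^*=\nabla^{\perp}\psi_j\cdot\nabla q_j$ (so that antisymmetry applies), and that identification is Lemma~\ref{lemma:nonlinearity}, which the paper proves \emph{using} Lemma~\ref{crucial}. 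Your argument is therefore circular; this is the standard ``weak limit of a product'' obstruction that the lemma exists to overcome, and no amount of weak convergence bookkeeping removes it.

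This also explains why your observation that ``no exponential Gronwall weight is needed'' should have been a warning sign rather than a simplification. The paper compares $\mathbf{q}^N$ not with $\mathbf{q}$ but with $\tilde{\mathbf{q}}^N=P^N\mathbf{q}$, so that $B_j^*$ only ever appears paired with the weakly null difference $\tilde{q}_j^N-q_j^N$, never with $q_j$ itself; the remaining piece of the nonlinearity is then a difference of the bilinear term at two \emph{concrete} arguments, which by antisymmetry reduces to $\langle\nabla^{\perp}(\tilde{\psi}_j^N-\psi_j^N)\cdot\nabla(\tilde{q}_j^N-q_j^N),\tilde{q}_j^N\rangle$, genuinely quadratic in the unknown difference. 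This is estimated via Ladyzhenskaya's inequality by $C\lVert\nabla q_j\rVert^2\lVert\tilde{q}_j^N-q_j^N\rVert^2+\tfrac{\nu}{2}\lVert\nabla(\tilde{q}_j^N-q_j^N)\rVert^2$, and the first summand is exactly what the weight $R(t)=\tfrac12\exp\bigl(-\eta_1 t-\eta_2\int_0^t\vertiii{\nabla\q(s)}^2\,ds\bigr)$ is there to absorb. Your secondary points --- the asymptotic cancellation of the It\^{o} corrections with the cross quadratic variation, and the uniformity in the upper integration limit via norm-compactness of $\{\mathbbm{1}_{[0,s\wedge\tau_M]}\mathbf{q}\}_{s\in[0,T]}$ --- are correct, but they do not repair the nonlinear term: to fix the proof you must either keep the comparison inside the Galerkin space as the paper does, or produce an independent proof that $\E\int_0^{\tau_M}\langle B_j^*,q_j\rangle\,ds=0$, which is essentially the lemma itself.
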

\begin{proof}
We have to show that 
\begin{equation}
\label{stopped processes difference}
    \E \int_0^T \mathbbm{1}_{[0, \tau_M]}(s)\vertiii{\q^N(s) - \q(s)}^2 \,ds
\end{equation}
converges to zero in $N$. Let us call $\Tilde{\psi}_j^N=P^N \psi_j\ \Tilde{q}_j^N=P^N q_j,\ j\in {1,2}$ and $\tilde{\bfpsi}^N, \ \tilde{\mathbf{q}}^N$ satisfy relation (\ref{eq:relation_q_psi}). Then, by the triangular inequality 
\begin{equation*}
    \eqref{stopped processes difference} \leq  2\E \int_0^T \mathbbm{1}_{[0, \tau_M]}(s)\vertiii{\tq^N(s) - \q(s)}^2 \,ds +2  \E \int_0^T \mathbbm{1}_{[0, \tau_M]}(s)\vertiii{\tq^N(s) - \q^N(s)}^2 \,ds
\end{equation*}
Thanks to the properties of the projector $P^N$ and dominated convergence theorem, it can be shown that $\Tilde{\mathbf{q}}^N\rightarrow \mathbf{q}$ in $L^2(\Omega, L^2(0,T; \cV))\cap L^2(\Omega, C(0,T; \cH)), $ and also in weaker topologies. Therefore, we are left to show the convergence of
\begin{equation}
     \E \int_0^{\tau_M} \vertiii{ \tq^N(s) - \q^N(s)}^2 \,ds .
\end{equation}
%
Let us start by taking the difference of \eqref{limit equation} and \eqref{weak formulation Galerkin vorticity}, pair the two components respectively with $h_1\left( \tilde{q}_1^N - q_1^N \right)$ and $h_2\left( \tilde{q}_2^N - q_2^N \right)$, and add them together to get an equation for $\vertiii{\tq^N - \q^N}$. With It\^{o} formula and some elementary manipulation using \eqref{eq:difference relation}, we have 
\begin{equation}
\begin{split}
      \frac{1}{2}d \vertiii{\tilde{\mathbf{q}}^N-\mathbf{q}^N}^2+\nu \vertiii{\nabla (\tilde{\mathbf{q}}^N-\mathbf{q}^N)}^2 \ dt =  
    -h_1\langle B_1^*-B_1^N,\Tilde{q}_1^N-q_1^N\rangle_{H^{-1},H^1}\,dt  -h_2\langle B_2^*-B_2^N,\Tilde{q}_2^N-q_2^N\rangle_{H^{-1},H^1}\,dt \\ 
    +  \nu S \lVert( \Tilde{q}_1^N-\Tilde{q}_2^N)-(q_1^N-q_2^N)\rVert^2 - \nu S(S_1+S_2)\lVert(-\Delta+S_1+S_2)^{-1/2}\left( (\Tilde{q}_1^N-\Tilde{q}_2^N)-(q_1^N-q_2^N)  \right) \rVert^2\\ 
      -\Tilde{\beta_1}\langle \partial_x(\psi_1-\psi_1^N),\Tilde{q}_1^N-q_1^N\rangle\ dt-\Tilde{\beta_2}\langle \partial_x(\psi_2-\psi_2^N),\Tilde{q}_2^N-q_2^N\rangle\ dt
       -S_1\sum_{k=1}^K\langle q_1-q_1^N,G^k_1(\Tilde{q}_1^N-q_1^N)\rangle dW^{1,k}_t \\
      -S_2\sum_{k=1}^K\langle q_2-q_2^N,G^k_2(\Tilde{q}_2^N-q_2^N)\rangle dW^{2,k}_t
       +S_1\langle q_1,F_1(\tilde{q}_1^N-q_1^N)\rangle\ dt-S_1\langle q_1^N,F_1^N(\tilde{q}_1^N-q_1^N)\rangle\ dt  \\
      +S_2\langle q_2,F_2(\tilde{q}_2^N-q_2^N)\rangle\ dt-S_2\langle q_2^N,F_2^N(\tilde{q}_2^N-q_2^N)\rangle\ dt  
      -\Tilde{r}\langle \Delta( \Tilde{\psi}_2^N-\psi_2^N),\Tilde{q}_2^N-q_2^N\rangle\ dt\\ 
      +\frac{S_1}{2}\sum_{k=1}^K\sum_{i=1}^N\langle q_1-q_1^N,G_1^k e_i\rangle^2 \ dt+\frac{S_2}{2}\sum_{k=1}^K\sum_{i=1}^N\langle q_2-q_2^N,G_2^k e_i\rangle^2 \ dt.
\end{split}\label{existence RHS}
\end{equation}
where we denoted $\Tilde{ \beta}_{1,2}:=h_{1,2}\beta,\ \Tilde{r}:=h_2r$ and $
    B_{1,2}^N=\nabla^{\perp}\psi_{1,2}^N\cdot \nabla q_{1,2}^N,
$.
    Now we can estimate the RHS in \eqref{existence RHS}. 
    In the following $C$ will denote a generic constant independent of $N$. 
    \begin{align}\label{eq:lemma_estimate1}
        \lVert \langle \Delta( \Tilde{\psi}_2^N-\psi_2^N),\Tilde{q}_2^N-q_2^N\rangle\rVert & \leq \lVert \Delta( \Tilde{\psi}_2^N-\psi_2^N)\rVert \lVert \Tilde{q}_2^N-q_2^N\lVert \rVert\leq C\vertiii{\tilde{\mathbf{q}}^N-\mathbf{q}^N} ^2;
    \end{align}
    \begin{align}
        \lVert( \Tilde{q}_1^N-\Tilde{q}_2^N)-(q_1^N-q_2^N)   \rVert^2\leq C\vertiii{\tilde{\mathbf{q}}^N-\mathbf{q}^N}^2;\label{eq:lemma_estimate2}
    \end{align}
    \begin{align}
        \lVert\Tilde{\beta_1}\langle \partial_x(\psi_1-\psi_1^N),\Tilde{q}_1^N-q_1^N\rangle\ +\Tilde{\beta_2}\langle \partial_x(\psi_2-\psi_2^N),\Tilde{q}_2^N-q_2^N\rangle\ \rVert& \leq C \vertiii{\mathbf{q}-\mathbf{q}^N}\vertiii{\tilde{\mathbf{q}}^N-\mathbf{q}^N} \nonumber\\ 
        &\leq C\vertiii{\tilde{\mathbf{q}}^N-\mathbf{q}^N}^2+C\vertiii{\mathbf{q}-\tilde{\mathbf{q}}^N}\vertiii{\tilde{\mathbf{q}}^N-\mathbf{q}^N} \nonumber \\ 
        & \leq C\vertiii{\tilde{\mathbf{q}}^N-\mathbf{q}^N}^2+C\vertiii{\mathbf{q}-\tilde{\mathbf{q}}^N} ^2.\label{eq:lemma_estimate3}
    \end{align}
    Next, to better understand the behavior of the terms
    \begin{align}
        2\langle q_1,F_1(\title{q}_1^N-q_1^N)\rangle - 2 \langle q_1^N,F_1^N(\title{q}_1^N-q_1^N)\rangle+ \sum_{k\in K}\sum_{i=1}^N\langle q_1-q_1^N,G^{k}_1e_i\rangle^2 \label{F1+F1N+G}
    \end{align}
    we will first write them in an equivalent form. By definition of $F_1$ \eqref{eq:F_j(q)} and Green's theorem we have 
    \begin{align*}
        2\langle q_1,F_1(\tilde{q}_1^N-q_1^N)\rangle-2\langle q_1^N,F_1^N(\tilde{q}_1^N-q_1^N)\rangle 
        = -\sum_{k=1}^K\langle \bm{\sigma}_{1,k}\cdot\nabla(\Tilde{q}_1^N-q_1^N), \bm{\sigma}_{1,k}\cdot\nabla q_1\rangle \\+\sum_{k=1}^K\langle P^N\left(\bm{\sigma}_{1,k}\cdot\nabla(\Tilde{q}_1^N-q_1^N)\right), \bm{\sigma}_{1,k}\cdot\nabla q_1^N\rangle\\
      = -\sum_{k=1}^K\langle \bm{\sigma}_{1,k}\cdot\nabla(\Tilde{q}_1^N-q_1^N), \bm{\sigma}_{1,k}\cdot\nabla q_1 - P^N\left(\bm{\sigma}_{1,k}\cdot\nabla q_1^N\right)\rangle.
        \end{align*}
        Similarly, recalling the definition \eqref{eq:Gkj(q)} of $G^k_1$, $k = 1, \ldots, K$, we have 
    \begin{align*}
        \sum_{k=1}^K\sum_{i=1}^N\langle q_1-q_1^N,G^k_1 e_i\rangle^2&=\sum_{k=1}^K\sum_{i=1}^N\langle \bm{\sigma}_{1,k}\cdot \nabla q_1-\bm{\sigma}_{1,k}\cdot \nabla q_1^N ,e_i\rangle^2\\ & =  \sum_{k=1}^K\langle \bm{\sigma}_{1,k}\cdot \nabla q_1-P^N(\bm{\sigma}_{1,k}\cdot \nabla q_1^N), P^N(\bm{\sigma}_{1,k}\cdot \nabla q_1)-P^N(\bm{\sigma}_{1,k}\cdot \nabla q_1^N)\rangle.
    \end{align*}
    Therefore, by some additional manipulations, it can be shown that 
    \begin{multline*}
        %
        %
        \eqref{F1+F1N+G} 
        = -\sum_{k=1}^K\langle \bm{\sigma}_{1,k}\cdot\nabla q_1-P^N(\bm{\sigma}_{1,k}\cdot \nabla q_1^N), (I-P^N)(\bm{\sigma}_{1,k}\cdot\nabla q_1) \rangle  
        + \sum_{k=1}^K\langle \bm{\sigma}_{1,k}\cdot\nabla q_1-P^N(\bm{\sigma}_{1,k}\cdot \nabla q_1^N), \bm{\sigma}_{1,k}\cdot\nabla (q_1-\Tilde{q}_1^N) \rangle\\
         + \sum_{k=1}^K\langle \bm{\sigma}_{1,k}\cdot\nabla q_1-P^N(\bm{\sigma}_{1,k}\cdot \nabla q_1^N), (I-P^N)(\bm{\sigma}_{1,k}\cdot \nabla q_1^N)\rangle.
    \end{multline*}
    We can now estimate all terms by Cauchy-Schwartz inequality to get
    \begin{multline}\label{eq:lemma_estimateF1G}
        \eqref{F1+F1N+G}  \leq \sum_{k=1}^K \lVert \bm{\sigma}_{1,k}\cdot\nabla q_1-P^N(\bm{\sigma}_{1,k}\cdot \nabla q_1^N)\rVert \lVert (I-P^N)(\bm{\sigma}_{1,k}\cdot\nabla q_1)  \rVert \\ +\sum_{k=1}^K C\lVert \bm{\sigma}_{1,k}\cdot\nabla q_1-P^N(\bm{\sigma}_{1,k}\cdot \nabla q_1^N)\rVert \lVert \nabla(q_1-\Tilde{q}_1^N)  \rVert
         +\sum_{k=1}^KC\lVert (I-P^N)(\bm{\sigma}_{1,k}\cdot \nabla q_1) \rVert \lVert \nabla q_1^N\rVert.
    \end{multline}
    The same estimate holds for the terms in \eqref{existence RHS} relative to the second component. 
    
We now move on to treating the nonlinear term
\begin{equation*}
     - h_1\langle B_1^* - B_1^N,\Tilde{q}_1^N-q_1^N\rangle_{H^{-1},H^1}\,dt  - h_2\langle B_2^* - B_2^N,\Tilde{q}_2^N-q_2^N\rangle_{H^{-1},H^1}\,dt 
\end{equation*}
in \eqref{existence RHS} and again we only treat $\langle B_1^* - B_1^N,\Tilde{q}_1^N-q_1^N\rangle_{H^{-1},H^1}$ as for the second component the argument is identical. 
By definition $B_1^N = \nabla^{\perp} \psi_1^N\cdot \nabla q^N_1$, hence 
\begin{multline}
    - h_1 \langle B_1^* - B_1^N,\Tilde{q}_1^N-q_1^N\rangle_{H^{-1},H^1} 
    = h_1 \langle \nabla^{\perp}(\tilde{\psi}_1^N-\psi_1^N)\cdot \nabla(\tilde{q}_1^N-q_1^N),\tilde{q}_1^N\rangle_{H^{-1},H^1}  \\ 
    +h_1\langle \nabla^{\perp}\tilde{\psi}_1^N\cdot \nabla \tilde{q}_1^N - B_1^*, \, \tilde{q}_1^N-q_1^N\rangle_{H^{-1},H^1}.\label{eq:lemma_nonlinearity}
\end{multline}
We can estimate the first term on the right hand side of the last display as follows
\begin{align*}
    \lvert  \langle \nabla^{\perp}(\tilde{\psi}_1^N-\psi_1^N)\cdot \nabla(\tilde{q}_1^N-q_1^N),\tilde{q}_1^N\rangle_{H^{-1},H^1}  \rvert & \leq \lVert \nabla \Tilde{q}_1^N\rVert \lVert \tilde{q}_1^N-q_1^N\rVert_{L^4}\lVert  \lVert \nabla^{\perp}(\tilde{\psi}_1^N-\psi_1^N) \rVert_{L^4}\\ & \leq C\lVert \nabla {q}_1\rVert   \lVert \tilde{q}_1^N-q_1^N\rVert_{L^4}^2
\end{align*}
and, by Ladyzhenskaya's and Young's inequalities we have 
\begin{align}
     \leq C\lVert \nabla {q}_1\rVert   \lVert \tilde{q}_1^N-q_1^N\rVert\lVert \nabla(\tilde{q}_1^N-q_1^N)\rVert \leq C_\nu \lVert \nabla q_1\rVert^2 \lVert \tilde{q}_1^N-q_1^N\rVert^2+\frac{\nu}{2}\lVert \nabla (\tilde{q}_1^N-q_1^N)\rVert^2.\label{eq:lemma_estimateB1}
\end{align}
The second term in \eqref{eq:lemma_nonlinearity} can be rewritten as 
\begin{align*}
\langle \nabla^{\perp}\tilde{\psi}_1^N\cdot\nabla\tilde{q}_1^N - B_1^*,\, \tilde{q}_1^N-q_1^N\rangle_{H^{-1},H^1} &= \langle \nabla^{\perp}{\psi}_1\cdot\nabla{q}_1 - B_1^* ,\, \tilde{q}_1^N-q_1^N\rangle_{H^{-1},H^1} \\ &- \langle \nabla^{\perp}{\psi}_1\cdot\nabla{q}_1-\nabla^{\perp}\tilde{\psi}_1^N\cdot\nabla\tilde{q}_1^N,\tilde{q}_1^N-q_1^N\rangle_{H^{-1},H^1} .
\end{align*}
Since $\tilde{\q}^N-\q^N\rightharpoonup 0$ in $L^2(\Omega;L^2(0,T; \cV))$, the expected value of first term will go to $0$ easily. For what concern the second one, adding and subtracting $\langle \nabla^{\perp}{\psi}_1\cdot\nabla\tilde{q}_1^N,\tilde{q}_1^N-q_1^N\rangle_{H^{-1},H^1}$ we have
\begin{align*}
  \langle \nabla^{\perp}{\psi}_1\cdot\nabla{q}_1-\nabla^{\perp}\tilde{\psi}_1^N\cdot\nabla\tilde{q}_1^N,\tilde{q}_1^N-q_1^N\rangle_{H^{-1},H^1} & = \langle \nabla^{\perp}{\psi}_1\cdot(\nabla{q}_1-\nabla\tilde{q}_1^N),\tilde{q}_1^N-q_1^N\rangle_{H^{-1},H^1}\\ & + \langle (\nabla^{\perp}{\psi}_1-\nabla^{\perp}{\tilde{\psi}_1}^N)\cdot\nabla\tilde{q}_1^N,\tilde{q}_1^N-q_1^N\rangle_{H^{-1},H^1}
\end{align*}
so that, again by Ladyzhenskaya's inequality,
\begin{align*}
    \leq \| \nabla^{\perp}\psi_1\|_{L^4} \| q_1-\tilde{q}_1^N \|^{1/2} \| \nabla( q_1-\tilde{q}_1^N)\rVert^{1/2} \| \nabla (\tilde{q}_1^N-q_1^N)\|  + \lVert \nabla^{\perp}\psi_1-\nabla^{\perp}\tilde{\psi}_1^N\rVert_{L^4}\lVert \tilde{q}_1^N\rVert^{1/2}\lVert \nabla \tilde{q}_1^N\rVert^{1/2}\lVert \nabla (\tilde{q}_1^N-q_1^N)\rVert.
\end{align*}
In summary, we showed
\begin{align}\label{eq:lemma_estimateB1B*}
\begin{split}
    \langle \nabla^{\perp}\tilde{\psi}_1^N\cdot\nabla\tilde{q}_1^N - B_1^*,\, \tilde{q}_1^N-q_1^N\rangle =
     &  \langle \nabla^{\perp}{\psi}_1\cdot\nabla{q}_1 - B_1^* ,\, \tilde{q}_1^N-q_1^N\rangle \\ 
     &+ \| \nabla^{\perp}\psi_1\|_{L^4} \| q_1-\tilde{q}_1^N \|^{1/2} \| \nabla( q_1-\tilde{q}_1^N)\rVert^{1/2} \| \nabla (\tilde{q}_1^N-q_1^N)\|  \\
     &+ \lVert \nabla^{\perp}\psi_1-\nabla^{\perp}\tilde{\psi}_1^N\rVert_{L^4}\lVert \tilde{q}_1^N\rVert^{1/2}\lVert \nabla \tilde{q}_1^N\rVert^{1/2}\lVert \nabla (\tilde{q}_1^N-q_1^N)\rVert. 
     \end{split}
\end{align}
%

Consider now the auxiliary function 
\begin{equation}
    R(t):=  \tfrac{1}{2}\exp(-\eta_1 t-\eta_2\int_0^t \vertiii{\nabla \mathbf{q}(s)}^2\ ds),
\end{equation}
with $\eta_1$ and $\eta_2$ two positive constants to be defined later, and let us then compute via the It\^{o}'s formula $R(t)\vertiii{\tq^N(t) - \q^N(t)}^2$.
 We exploit previous estimates \eqref{eq:lemma_estimate1},\eqref{eq:lemma_estimate2},\eqref{eq:lemma_estimate3},\eqref{eq:lemma_estimateF1G} and \eqref{eq:lemma_estimateB1} and take the expected value for $t=\tau_M$ obtaining
\begin{align}\begin{split}
    &\E \left[\tfrac{1}{2}R(\tau_M)\vertiii{\tilde{\mathbf{q}}^N(\tau_M)-\mathbf{q}^N(\tau_M)}^2\right]+\nu\E \left[ \int_0^{\tau_M} R(s)\vertiii{\nabla(\tilde{\mathbf{q}}^N(s)-\mathbf{q}^N(s))} ^2\, ds \right]\leq\\
    &  -\frac{\eta_1}{2}\E \left[\int_0^{\tau_M}R(s)\vertiii{\tilde{\mathbf{q}}^N(s)-\mathbf{q}^N(s)} ^2 \ ds \right] -\frac{\eta_2}{2}\E \left[\int_0^{\tau_M}R(s)\vertiii{\nabla \mathbf{q}(s)}^2 \vertiii{\tilde{\mathbf{q}}^N(s)-\mathbf{q}^N(s)}^2 \ ds \right]\\ 
    & +h_1\left\lvert\E \left[\int_0^{\tau_M}R(s) \langle B_1^*(s)-\nabla^{\perp}\tilde{\psi}_1^N(s)\cdot\nabla\tilde{q}_1^N(s),\tilde{q}_1^N(s)-q_1^N(s)\rangle\   \ ds\right]\right\rvert\\ 
    & + h_2\left\lvert\E \left[\int_0^{\tau_M}R(s) \langle B_2^*(s)-\nabla^{\perp}\tilde{\psi}_2^N(s)\cdot\nabla\tilde{q}_2^N(s),\tilde{q}_2^N(s)-q_2^N(s)\rangle\   \ ds\right]\right\rvert\\ 
    & + C\E \left[\int_0^{\tau_M} R(s)\vertiii{\nabla \q(s)}^2 \vertiii{\tilde{\mathbf{q}}^N(s)-\mathbf{q}^N(s)}^2\ ds\right] +\frac{\nu}{2}\E \left[\int_0^{\tau_M} R(s) \vertiii{\nabla\tilde{\mathbf{q}}(s)^N-\nabla \mathbf{q}^N(s)}^2\ ds\right]\\ 
    & +C\E \left[\int_0^{\tau_M}R(s) \vertiii{\tilde{\mathbf{q}}^N(s)-\mathbf{q}^N(s)}^2\ ds \right]
     +C\E \left[\int_0^{\tau_M}R(s) \vertiii{\tilde{\mathbf{q}}^N(s)-\mathbf{q}(s)}^2\ ds \right]\\
    & +\sum_{k=1}^K \E \left[\int_0^{\tau_M}\lVert \bm{\sigma}_{1,k}\cdot\nabla q_1(s)-P^N(\bm{\sigma}_{1,k}\cdot \nabla q_1^N(s))\rVert \lVert (I-P^N)(\bm{\sigma}_{1,k}\cdot\nabla q_1(s))  \rVert\ ds\right]\\ 
    &+C\sum_{k=1}^K \E \left[\int_0^{\tau_M}\lVert \bm{\sigma}_{1,k}\cdot\nabla q_1(s)-P^N(\bm{\sigma}_{1,k}\cdot \nabla q_1^N(s))\rVert \lVert \nabla(q_1(s)-\Tilde{q}_1^N(s))  \rVert\ ds\right]\\
    & +C\sum_{k=1}^K\E \left[\int_0^{\tau_M}\lVert (I-P^N)(\bm{\sigma}_{1,k}\cdot \nabla q_1(s)) \rVert \lVert \nabla q_1^N(s)\rVert \ ds\right]\\
    &+\sum_{k=1}^K \E \left[\int_0^{\tau_M}\lVert \bm{\sigma}_{2,k}\cdot\nabla q_2(s)-P^N(\bm{\sigma}_{2,k}\cdot \nabla q_2^N(s))\rVert \lVert (I-P^N)(\bm{\sigma}_{2,k}\cdot\nabla q_2(s))  \rVert\ ds\right]\\
    &+C\sum_{k=1}^K \E \left[\int_0^{\tau_M}\lVert \bm{\sigma}_{2,k}\cdot\nabla q_2(s)-P^N(\bm{\sigma}_{2,k}\cdot \nabla q_2^N(s))\rVert \lVert \nabla(q_2(s)-\Tilde{q}_2^N(s))  \rVert\ ds\right]\\
    & +C\sum_{k=1}^K\E \left[\int_0^{\tau_M}\lVert (I-P^N)(\bm{\sigma}_{2,k}\cdot \nabla q_2(s)) \rVert \lVert \nabla q_2^N(s)\rVert \ ds\right].
    \end{split}\label{eq:lemmaExp_eq}
\end{align}
Taking $\eta_1,\ \eta_2$ large enough and exploiting the convergence of $\tilde{\mathbf{q}}^N$ to $\mathbf{q}$ we can neglect several terms in the right hand side. Let us consider the remaining terms. 
%
Recalling that from the weak convergence of $\mathbf{q}^N$ it follows that $\E \left[\int_0^T\vertiii{\nabla \mathbf{q}^N(s)}^2 \ ds\right]\leq C$, applying Cauchy–Schwarz inequality where it is needed, we get
 \begin{align*}
    & \sum_{k=1}^K \E \left[\int_0^{\tau_M}\lVert \bm{\sigma}_{1,k}\cdot\nabla q_1(s)-P^N(\bm{\sigma}_{1,k}\cdot \nabla q_1^N(s))\rVert \lVert (I-P^N)(\bm{\sigma}_{1,k}\cdot\nabla q_1(s))  \rVert\ ds\right]\\ 
    &+C\sum_{k=1}^K \E \left[\int_0^{\tau_M}\lVert \bm{\sigma}_{1,k}\cdot\nabla q_1(s)-P^N(\bm{\sigma}_{1,k}\cdot \nabla q_1^N(s))\rVert \lVert \nabla(q_1(s)-\Tilde{q}_1^N(s))  \rVert\ ds\right]\\ 
    & +C\sum_{k=1}^K\E \left[\int_0^{\tau_M}\lVert (I-P^N)(\bm{\sigma}_{1,k}\cdot \nabla q_1(s)) \rVert \lVert \nabla q_1^N(s)\rVert \ ds\right]\\ 
    &\leq C\sum_{k=1}^K\E \left[\int_0^{T} \lVert (I-P^N)(\sigma_{1,k}\cdot\nabla q_1(s))  \rVert^2\ ds\right]^{1/2}+ C\E \left[\int_0^{T} \lVert \nabla (q_1(s)-\tilde{q}_1^N(s))  \rVert^2\ ds\right]^{1/2}\rightarrow 0
\end{align*}
and similarly for the second component. 
Lastly, let us consider 
\[\left\lvert\E \left[\int_0^{\tau_M}R(s) \langle B_1^*(s)-\nabla^{\perp}\tilde{\psi}_1^N(s)\cdot\nabla\tilde{q}_1^N(s),\tilde{q}_1^N(s)-q_1^N(s)\rangle\   \ ds\right]\right\rvert\]
%
By \eqref{eq:lemma_estimateB1B*} we have that 
\begin{align}
\begin{split}
  & \E \left[\int_0^{\tau_M}R(s)  \langle \nabla^{\perp}\tilde{\psi}_1^N(s)\cdot\nabla\tilde{q}_1^N(s) - B_1^*(s),\, \tilde{q}_1^N(s)-q_1^N(s)\rangle\, ds \right]\\ &  \leq
       \E \left[\int_0^{\tau_M}R(s) \langle \nabla^{\perp}{\psi}_1(s)\cdot\nabla{q}_1(s) - B_1^*(s) ,\, \tilde{q}_1^N(s)-q_1^N(s)\rangle \, ds \right]\\ &
     + \E \left[\int_0^{\tau_M}R(s) \| \nabla^{\perp}\psi_1(s)\|_{L^4} \| q_1(s)-\tilde{q}_1^N(s) \|^{1/2} \| \nabla( q_1(s)-\tilde{q}_1^N(s))\rVert^{1/2} \| \nabla (\tilde{q}_1^N(s)-q_1^N(s))\|  \, ds \right]\\ &
     + \E \left[\int_0^{\tau_M}R(s) \lVert \nabla^{\perp}\psi_1(s)-\nabla^{\perp}\tilde{\psi}_1^N(s)\rVert_{L^4}\lVert \tilde{q}_1^N(s)\rVert^{1/2}\lVert \nabla \tilde{q}_1^N(s)\rVert^{1/2}\lVert \nabla (\tilde{q}_1^N(s)-q_1^N(s))\rVert\, ds \right]. 
     \end{split}
\end{align}
As already observed the first term converges to zero as $\tilde{q}^N-q^N\rightharpoonup 0$ in $L^2(\Omega;L^2(0,T; \cV))$. For what concern the two remaining terms, thanks to H\"{o}lder inequality, we have
\begin{align*}
    %
    & \leq C \E \left[\int_0^{T}\lVert q_1(s)-\tilde{q}_1^N(s)\rVert^{2}\ ds\right]^{1/4}\E \left[\int_0^{T}\lVert \nabla (q_1(s)-\tilde{q}_1^N(s))\rVert^{2}\ ds\right]^{1/4}\E \left[\int_0^{T}\lVert \nabla(q_1^N(s)-\tilde{q}_1^N(s))\rVert^{2}\ ds\right]^{1/2}\\ 
    & + C \E \left[\int_0^{T}\lVert q_1(s)-\tilde{q}_1^N(s)\rVert^{2} \ ds\right]^{1/4}\E \left[\int_0^{\tau_M}\lVert \nabla\tilde{q}_1^N(s)\rVert^{2} \ ds\right]^{1/4}\E \left[\int_0^{T}\lVert \nabla(q_1^N(s)-\tilde{q}_1^N(s))\rVert^{2} \ ds\right]^{1/2}.
\end{align*}
In the last inequality we exploit the fact that $\vertiii{q(t)}^2\leq M$, for all $t\leq \tau_M$, and $\tilde{q}^N-q^N\rightharpoonup 0$ in $L^2(\Omega;L^2(0,T; \cV))$.

In conclusion, in \eqref{eq:lemmaExp_eq} all the terms on the right hand side converge to zero as $N\to \infty$, namely we have the following relation 
\begin{align}\label{end of lemma}
    \E \left[\frac{1}{2}R(\tau_M)\vertiii{\tilde{\mathbf{q}}^N(\tau_M)-\mathbf{q}^N(\tau_M)}^2\right]+\frac{\nu}{2}\E \left[ \int_0^{\tau_M} R(s)\vertiii{\nabla(\tilde{\mathbf{q}}^N(s)-\mathbf{q}^N(s))}^2\, ds \right]\rightarrow 0.
\end{align}
From relation (\ref{end of lemma}), $R(t)\geq C_M >0$, for all $t\leq \tau_M $, and the properties of $P^N$, via triangular inequality the thesis follows.
\end{proof}

The lemma just shown allows to treat the nonlinearity $\nabla^{\perp} \psi_j^N \cdot \nabla q_j^N $, more precisely, to show that for both $j = 1, 2$
\begin{equation*}
    \nabla^{\perp} \psi_j^N \cdot \nabla q_j^N \rightharpoonup \nabla^{\perp} \psi_j \cdot \nabla q_j .
\end{equation*}

\begin{lemma}\label{lemma:nonlinearity}
Let $B_1^*, B_2^*$ be the limit processes as in \eqref{eq:defB12*}. Then 
    $B_1^{*}=\nabla^{\perp}\psi_1\cdot \nabla q_1$ and $B_2^{*}=\nabla^{\perp}\psi_2\cdot \nabla q_2$ in  $L^2(\Omega, L^2(0,T;H^{-1}(\mathcal{D})))$.
\end{lemma}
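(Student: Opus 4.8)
The plan is to identify $B_1^*$ by testing the weak convergence $\nabla^{\perp}\psi_1^N\cdot\nabla q_1^N\rightharpoonup B_1^*$ from \eqref{eq:defB12*} against a dense family of test processes and checking that the resulting pairing agrees with that of $\nabla^{\perp}\psi_1\cdot\nabla q_1$; by uniqueness of weak limits this forces $B_1^*=\nabla^{\perp}\psi_1\cdot\nabla q_1$ in $L^2(\Omega;L^2(0,T;H^{-1}(\cD)))$ (the argument for $B_2^*$ is identical). Since simple processes of the form $X(\omega)g(t)e_i$, with $X$ bounded and $\cF$-measurable, $g\in L^{\infty}(0,T)$ and $e_i$ a basis eigenfunction, are dense in $L^2(\Omega;L^2(0,T;H^1(\cD)))$, it suffices to show for each such triple that $\E[X\,Q_N(T)]\to 0$, where $Q_N(t):=\int_0^t g(s)\langle \nabla^{\perp}\psi_1^N(s)\cdot\nabla q_1^N(s)-\nabla^{\perp}\psi_1(s)\cdot\nabla q_1(s),e_i\rangle_{H^{-1},H^1}\,ds$. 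As $X$ is bounded, this reduces to proving $\E[|Q_N(T)|]\to 0$.

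First I would use that $\nabla^{\perp}\psi_1$ is divergence free to rewrite the pairing by integration by parts as $\langle \nabla^{\perp}\psi_1\cdot\nabla q_1,e_i\rangle_{H^{-1},H^1}=-\langle q_1\,\nabla^{\perp}\psi_1,\nabla e_i\rangle$, so that the integrand of $Q_N$ becomes $-\langle q_1^N\nabla^{\perp}\psi_1^N-q_1\nabla^{\perp}\psi_1,\nabla e_i\rangle$, which I would split as $-\langle (q_1^N-q_1)\nabla^{\perp}\psi_1^N,\nabla e_i\rangle-\langle q_1(\nabla^{\perp}\psi_1^N-\nabla^{\perp}\psi_1),\nabla e_i\rangle$. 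The key structural input is that the streamfunction is two derivatives smoother than the vorticity: from the bounded inverse $(-\bm{\Delta}-M)^{-1}$ and \eqref{eq:relation_q_psi} one has $\|\nabla^{\perp}\psi_1^N-\nabla^{\perp}\psi_1\|\lesssim\|\bfpsi^N-\bfpsi\|_{\cV}\lesssim\vertiii{\q^N-\q}$ and likewise $\|\nabla^{\perp}\psi_1^N\|\lesssim\vertiii{\q^N}$. Hence, up to the factor $\|\nabla e_i\|_{L^{\infty}}$, the integrand is controlled by $\vertiii{\q^N-\q}\,(\vertiii{\q^N}+\vertiii{\q})$: this is a genuine weak–strong product, one factor being small in $\cH$ and the other only bounded.

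It then remains to verify the two hypotheses of \autoref{lemma:convergence}. For the uniform bound, $|Q_N(T)|\lesssim\int_0^T(\vertiii{\q^N(s)}^2+\vertiii{\q(s)}^2)\,ds$, whence $\sup_N\E[|Q_N(T)|^2]\lesssim T^2\big(\E[\sup_{t\in[0,T]}\vertiii{\q^N(t)}^4]+\E[\sup_{t\in[0,T]}\vertiii{\q(t)}^4]\big)<\infty$ by the higher-integrability estimate \eqref{energy L4}. For the stopped convergence, on $[0,\tau_M]$ one has $\vertiii{\q}\lesssim\sqrt{M}$, so that, applying Cauchy–Schwarz in both time and $\omega$, $\E[|Q_N(\tau_M)|]\lesssim\big(\E\int_0^{\tau_M}\vertiii{\q^N-\q}^2\,ds\big)^{1/2}\big((\E\int_0^T\vertiii{\q^N}^2\,ds)^{1/2}+\sqrt{MT}\big)$. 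The first factor tends to $0$ as $N\to\infty$ by \autoref{crucial}, while the bracket is bounded uniformly in $N$ by \eqref{aprioriL2}; thus $\E[|Q_N(\tau_M)|]\to 0$ for every $M$. \autoref{lemma:convergence} then yields $\E[|Q_N(T)|]\to 0$, completing the identification.

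The hard part will be precisely the mismatch between the two modes of convergence at hand: the strong convergence of the vorticities provided by \autoref{crucial} holds only on the random interval $[0,\tau_M]$, whereas the nonlinearity has to be identified on all of $[0,T]$, and the quadratic structure blocks a direct weak passage to the limit. The role of \autoref{lemma:convergence} is exactly to bridge this gap, trading strong-but-stopped convergence together with a uniform moment bound for convergence on the whole interval; checking that uniform bound is where the estimate \eqref{energy L4} from \autoref{stime energia} is indispensable.
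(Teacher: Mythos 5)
Your proof is correct, and it takes a genuinely different (and in places cleaner) route than the paper's. The paper first proves that the bilinear terms are uniformly bounded in $L^2(\Omega;L^2(0,T;H^{-1}(\mathcal{D})))$ --- this is where both \eqref{energy L4} and \eqref{energy strana} enter --- then splits $\nabla^{\perp}\psi_1\cdot\nabla q_1-\nabla^{\perp}\psi_1^N\cdot\nabla q_1^N$ so that the mixed terms with frozen streamfunction, e.g.\ $\nabla^{\perp}\psi_1\cdot\nabla q_1^N$, pass to the limit by the plain weak convergence of $\q^N$ (integration by parts against test processes in $L^{\infty}(\Omega;L^{\infty}(0,T;H^1(\mathcal{D})))$, then density), and only the remainder $\nabla^{\perp}\psi_1^N\cdot\nabla(q_1-q_1^N)$ is handled with the stopped convergence of \autoref{crucial}. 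You instead put the derivative on a fixed eigenfunction from the start and decompose as $(q_1^N-q_1)\nabla^{\perp}\psi_1^N+q_1(\nabla^{\perp}\psi_1^N-\nabla^{\perp}\psi_1)$, using the bounded inverse $(-\bm{\Delta}-M)^{-1}$ to convert the streamfunction difference into $\vertiii{\q^N-\q}$; then \emph{every} term carries the small factor $\vertiii{\q^N-\q}$, and the single mechanism \autoref{crucial} plus \autoref{lemma:convergence} disposes of everything at once. This buys you two things: no gradient norms appear, so \eqref{energy strana} and the preliminary $H^{-1}$-boundedness step become unnecessary (only \eqref{aprioriL2} and \eqref{energy L4} are used; note that for the limit process $\q$ the fourth-moment bound follows from \eqref{energy L4} by weak-$*$ lower semicontinuity rather than directly); and the stopped-to-full-interval passage, which the paper dispatches tersely by invoking $\tau_M\nearrow T$ and \autoref{stime energia}, is made completely explicit through your verification of the two hypotheses of \autoref{lemma:convergence}. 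What the paper's longer route buys in exchange is the uniform $H^{-1}$ bound on the nonlinearity itself, which shows directly that $\nabla^{\perp}\psi_1\cdot\nabla q_1\in L^2(\Omega;L^2(0,T;H^{-1}(\mathcal{D})))$; in your argument this membership is not known a priori, so ``uniqueness of weak limits'' should be spelled out slightly differently: the $L^1(\Omega\times[0,T])$ functions $\langle B_1^*-\nabla^{\perp}\psi_1\cdot\nabla q_1,e_i\rangle$ are annihilated by all products $X(\omega)g(t)$, hence vanish a.e., and totality of $\{e_i\}$ in $H^1(\mathcal{D})$ then gives $B_1^*=\nabla^{\perp}\psi_1\cdot\nabla q_1$ for a.e.\ $(\omega,t)$, the required square-integrability of the right-hand side being inherited from $B_1^*$. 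This last remark is a matter of phrasing, not a gap.
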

\begin{proof}
    Thanks to estimate \ref{energy L4} and \ref{energy strana} we know that $\nabla^{\perp}\psi_{1,2}\cdot \nabla q^N_{1,2}$ and $\nabla^{\perp}\psi_{1,2}^N\cdot \nabla q_{1,2}$ converge to $\nabla^{\perp}\psi_{1,2}\cdot \nabla q_{1,2}$ weakly in $L^2(\Omega;L^2(0,T;H^{-1}(\mathcal{D})))$.
    We do the explicit computations just for one of them, the others are analogous. \begin{align*}
        \E \left[\int_0^T \lVert \nabla^{\perp}\psi_{1,2}(s)\cdot \nabla q^N_{1,2}(s) \rVert_{H^{-1}}^2\ ds\right]&  \leq C 
         \E \left[\int_0^T \lVert \Delta\psi(s)\rVert^2 \lVert \nabla q^N(s) \rVert^2\ ds\right]\\ & \leq C 
         \E \left[\operatorname{sup}_{t\in [0,T]} \lVert \mathbf{q}(t)\rVert^2\int_0^T  \lVert \nabla \mathbf{q}^N(s) \rVert^2\ ds\right] \\ & \leq C\E \left[\operatorname{sup}_{t\in [0,T]} \lVert \mathbf{q}(t)\rVert^4\right]+ C\E \left[\left(\int_0^T  \lVert \nabla \mathbf{q}^N(s) \rVert^2\ ds\right)^2\right]\leq C.
    \end{align*}
    Let now $\phi \in L^{\infty}(\Omega;L^{\infty}(0,T;H^1(\mathcal{D})))$, then $\nabla \phi\cdot \nabla^{\perp}\psi_{1,2}\in L^2(\Omega;L^{2}(0,T;L^2(\mathcal{D})))$. Thus, from the convergence properties of $\mathbf{q}^N$, we have
    \begin{align*}
        \E \left[\int_0^T \langle \nabla^{\perp}\psi_{1,2}(s)\cdot\nabla q_{1,2}^N(s),\phi\rangle_{H^{-1},H^1} ds\right]&=-\E \left[\int_0^T \langle \nabla^{\perp}\psi_{1,2}(s)\cdot\nabla\phi, q_{1,2}^N(s)\rangle ds\right]\\ & \rightarrow
        -\E \left[\int_0^T \langle \nabla^{\perp}\psi_{1,2}(s)\cdot\nabla\phi, q_{1,2}(s)\rangle ds\right]\\& =\E \left[\int_0^T \nabla^{\perp}\psi_{1,2}(s)\cdot\nabla q_{1,2}(s),\phi\rangle_{H^{-1},H^1} ds\right].
    \end{align*}
    From the density of $L^{\infty}(\Omega;L^{\infty}(0,T;H^1(\mathcal{D})))$ in $L^{2}(\Omega;L^{2}(0,T;H^1(\mathcal{D})))$ and the uniform boundedness of $\nabla^{\perp}\psi_{1,2}\cdot \nabla q^N_{1,2}$ in $L^{2}(\Omega;L^{2}(0,T;H^{-1}(\mathcal{D})))$ we have the required claim. For what concern the convergence of the nonlinear term, first note that, arguing as above, the sequence $\{\nabla^{\perp}\psi^N_{1,2}\cdot \nabla q^N_{1,2}\}$ is uniformly bounded in $L^{2}(\Omega;L^{2}(0,T;H^{-1}(\mathcal{D})))$. Moreover we have  
    \begin{align*}
        \nabla^{\perp}\psi_{1,2}\cdot\nabla q_{1,2}- \nabla^{\perp}\psi_{1,2}^N\cdot\nabla q_{1,2}^N&= \nabla^{\perp}\psi_{1,2}\cdot\nabla (q_{1,2}-q_{1,2}^N)+\nabla^{\perp}\psi_{1,2}\cdot\nabla q_{1,2}^N\\ & +\nabla^{\perp}\psi_{1,2}^N\cdot\nabla (q_{1,2}-q_{1,2}^N)-\nabla^{\perp}\psi_{1,2}\cdot\nabla q_{1,2}^N=:I_1+I_2+I_3+I_4.
    \end{align*}
    Thanks to the previous observations $I_1+I_2+I_4$ converges weakly to  $0$ in $L^{2}(\Omega;L^{2}(0,T;H^{-1}(\mathcal{D})))$. For what concern $I_3$, let us take $\phi \in  L^{\infty}(\Omega;L^{\infty}(0,T;H^2(\mathcal{D})))$ and $\tau_M$ defined as in Lemma~\ref{crucial}, then we have
    \begin{align*}
        \E \left[\int_0^{\tau_M} \langle \nabla^{\perp}\psi_{1,2}^N(s)\cdot\nabla (q_{1,2}(s)-q_{1,2}^N(s)) ,\phi\rangle\ ds\right] & \leq C \E \left[\int_0^{\tau_M} \lVert \q^N(s)\rVert\lVert \q(s)-\q^N(s)\rVert\ ds\right]\rightarrow 0
    \end{align*}
    thanks to H\"{o}lder's inequality and Lemma~\ref{crucial}. Since it holds $\tau_M\nearrow T\ a.s.$, then the thesis follows, thanks to Lemma~\ref{stime energia}. Then the thesis follows by the density of $L^{\infty}(\Omega;L^{\infty}(0,T;H^2(\mathcal{D})))$ in $L^{2}(\Omega;L^{2}(0,T;H^1(\mathcal{D})))$ and the uniform boundedness of $\nabla^{\perp}\psi^N_{1,2}\cdot \nabla q^N_{1,2}$ in $^{2}(\Omega;L^{2}(0,T;H^{-1}(\mathcal{D})))$.
\end{proof}
This series of lemmas identify the nonlinear term and conclude the proof of Theorem~\ref{thm well psd qg vorticity}. Actually, thanks to some abstract results on stochastic processes it can be shown something more, namely that the full sequence $\mathbf{q}^N$ converges to $\mathbf{q}$ in $L^2(\Omega;L^2(0,T; \cV))$ and, for each $t\in [0,T]$, $\mathbf{q}^N(t)$ converges to $\mathbf{q}(t)$ in $L^2(\Omega; \cH)$. This is an easy corollary of previous result and Lemma~\ref{lemma:convergence}.
\begin{theorem}
    The entire Galerkin's sequence $\mathbf{q}^N$ satisfies 
    \begin{align*}
        \lim_{N\rightarrow +\infty}\E \left[\lVert \mathbf{q}^N(t)-\mathbf{q}(t)\rVert^2\right]=0;\\ 
        \lim_{N\rightarrow +\infty}\int_0^T\E \left[\lVert \nabla(\mathbf{q}^N(t)-\mathbf{q}(t))\rVert^2\right]\ dt=0.
    \end{align*}
\end{theorem}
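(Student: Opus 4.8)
The plan is to strip the stopping time off \autoref{crucial} by invoking the abstract \autoref{lemma:convergence} twice: once for the time-integrated gradient and once for the pointwise-in-time $\cH$-norm. Before that, I would upgrade the convergence from the extracted subsequence to the \emph{entire} Galerkin sequence. This is where uniqueness enters: by the a priori bounds of \autoref{stime energia}, every subsequence of $\{\q^N\}$ has a further subsequence converging, via the extraction argument of \autoref{Sec Galerkin approximation}, to a weak solution of \eqref{eq:QG_ito}, which by the uniqueness theorem must equal $\q$; the Urysohn subsequence principle then gives that the whole sequence $\q^N$ converges $*$-weakly in $L^4(\Omega;L^\infty(0,T;\cH))$ and weakly in $L^4(\Omega;L^2(0,T;\cV))$ to $\q$. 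Consequently the conclusion of \autoref{crucial}, together with relation \eqref{end of lemma} and the strong convergence $\tq^N\to\q$ in $L^2(\Omega;C([0,T];\cH))\cap L^2(\Omega;L^2(0,T;\cV))$ established in its proof, holds for the full sequence. Concretely I would record the three stopped facts I need: $\E\int_0^{\tau_M}\vertiii{\q^N-\q}^2\,ds\to 0$ (the statement of \autoref{crucial}), $\E\int_0^{\tau_M}\vertiii{\nabla(\q^N-\q)}^2\,ds\to 0$ and $\E[\vertiii{\q^N(\tau_M)-\q(\tau_M)}^2]\to 0$ (both from \eqref{end of lemma} after using $R(s)\ge C_M>0$ and the strong convergence of $\tq^N$).

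For the second (gradient) assertion I would set
\[
    Q_N(t):=\int_0^t \vertiii{\nabla(\q^N(s)-\q(s))}^2\,ds,
\]
a sequence of continuous adapted real processes, with stopping times $\sigma_M:=\tau_M$, which increase to $T$ almost surely because $\q\in C_{\cF}([0,T];\cH)\cap L^2_{\cF}(0,T;\cV)$. The first hypothesis of \autoref{lemma:convergence}, $\sup_N\E[Q_N(T)^2]<\infty$, follows from \eqref{energy strana} applied to $\q^N$ and, by weak lower semicontinuity of the norms, to $\q$; the second hypothesis, $\E[Q_N(\tau_M)]\to 0$, is the stopped gradient fact recalled above. \autoref{lemma:convergence} then yields $\E[Q_N(T)]=\E\int_0^T\vertiii{\nabla(\q^N-\q)}^2\,ds\to 0$, which after Fubini and the equivalence of $\vertiii{\cdot}$ with $\|\cdot\|$ is exactly the second claim.

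For the pointwise claim I would fix $t\in[0,T]$, work on $[0,t]$, and take $Q_N(s):=\vertiii{\q^N(s)-\q(s)}^2$ with stopping times $\tau_M\wedge t\nearrow t$. The first hypothesis now reads $\sup_N\E[\vertiii{\q^N(t)-\q(t)}^4]<\infty$ and is supplied by the $L^4$-bound \eqref{energy L4}; the second, $\E[\vertiii{\q^N(\tau_M\wedge t)-\q(\tau_M\wedge t)}^2]\to 0$, follows by re-running the It\^o computation behind \eqref{end of lemma} up to $\tau_M\wedge t\le\tau_M$ instead of up to $\tau_M$. \autoref{lemma:convergence} then delivers $\E[\vertiii{\q^N(t)-\q(t)}^2]\to 0$, i.e.\ $\E[\|\q^N(t)-\q(t)\|^2]\to 0$, for every $t$.

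The main obstacle I anticipate is verifying the first (uniform second-moment at the endpoint) hypothesis of \autoref{lemma:convergence}: it is precisely here that the $N$-uniform higher-moment estimates \eqref{energy L4} and \eqref{energy strana} are indispensable, and one must also confirm that the limit $\q$ inherits these bounds so that the \emph{differences} $\q^N-\q$ are controlled. The only other point needing care is that convergence at $\tau_M\wedge t$ is not formally part of the statement of \autoref{crucial} but requires the minor re-run of its proof just indicated; granting that, the two applications of \autoref{lemma:convergence} are routine, which is why this result is an easy corollary of the preceding analysis.
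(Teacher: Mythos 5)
Your proposal is correct and follows essentially the same route as the paper: uniqueness of the weak solution combined with the subsequence extraction argument upgrades the convergence to the full Galerkin sequence, and then \autoref{lemma:convergence} is applied with $\sigma_M=\tau_M$ and precisely the two choices $Q_N(t)=\int_0^t\vertiii{\nabla(\q^N(s)-\q(s))}^2\,ds$ and $Q_N(t)=\vertiii{\q^N(t)-\q(t)}^2$, using \eqref{end of lemma} together with $R\geq C_M>0$ for the stopped hypothesis and the uniform bounds \eqref{energy L4}, \eqref{energy strana} of \autoref{stime energia} for the moment hypothesis. Your write-up is in fact more explicit than the paper's on two points it leaves implicit, namely the verification of $\sup_N\E[|Q_N(T)|^2]<\infty$ and the need to run the It\^o estimate up to $\tau_M\wedge t$ for the pointwise-in-time claim.
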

\begin{proof}
    From the uniqueness of the solution of problem (\ref{eq:QG_ito}), we have that each sub-sequence $\mathbf{q}^{n_k}$ has a converging sub-sub-sequence $\mathbf{q}^{n_{k,k}}$ which satisfies \autoref{stime energia}, \autoref{crucial} and \autoref{lemma:nonlinearity}. In particular, defining $\tau_M$ as in the proof of Lemma~\ref{crucial}, from relation (\ref{end of lemma}), $R(t)\geq C_M>0$ for all $t\leq \tau_M $ and the properties of $P^N$ we have the thesis via Lemma~\ref{lemma:convergence} using $\sigma_M=\tau_M$ and $Q_N=\int_0^t\lVert \mathbf{q}(s)-\mathbf{q}^{n_{N,N}}(s)\rVert_V^2\ ds$ or $Q_N=\lVert \left(\q-\q^{n_{N,N}}\right)(t)\rVert^2$.
\end{proof}

Lastly, we show that the energy estimate \eqref{ITO} in Lemma~\ref{stime energia} continues to holds for $\mathbf{q}$, solution of problem (\ref{eq:QG_ito}). For what concern the a priori and integral estimates in \autoref{stime energia}, they straightforwardly continue to hold by the weak convergence of $\mathbf{q}^{N}$ to $\mathbf{q}$, but they can proved independently starting from the energy estimate and repeating the same steps of Lemma~\ref{stime energia}.
\begin{proposition}
\label{Ito qg}
    Given $\q$ solution of \eqref{eq:QG_ito}, then the following energy estimate holds 
    \begin{align*}
        \frac{d \vertiii{\mathbf{\mathbf{q}}} ^2}{2}+\nu \vertiii{\nabla \mathbf{\mathbf{q}}}^2 dt& =\left(-\beta h_1\langle \partial_x \psi_1,q_1\rangle-\beta h_2\langle \partial_x \psi_2,q_2\rangle+h_1\langle F,q_1\rangle-rh_2 \lVert q_2\rVert^2+Sr\langle \psi_1-\psi_2,q_2\rangle\right) dt\notag\\ & +\left(S\nu\lVert q_1-q_2\rVert^2+S\nu(S_1+S_2)\langle \psi_1-\psi_2,q_1-q_2\rangle\right)dt.
    \end{align*}
\end{proposition}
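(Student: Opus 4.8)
The plan is to recast \eqref{eq:QG_ito} as an abstract It\^o evolution equation in the Gelfand triple $\cV\hookrightarrow\cH\hookrightarrow\cV'$, with $\cV=\mathbf H^1$, $\cH=\mathbf L^2$ and $\cV'=\mathbf H^{-1}$, and then to apply the variational It\^o formula for the square of the $\cH$-norm (see \cite{pardoux1975equations}). Since \autoref{lemma:nonlinearity} identifies the limiting nonlinearities as $B_j^*=\nabla^\perp\psi_j\cdot\nabla q_j$, the solution $\q\in L^2_{\cF}(0,T;\cV)\cap C_{\cF}([0,T];\cH)$ satisfies, as an identity in $\cV'$,
\[
d\q=\mathcal A(\q)\,dt+\sum_{k=1}^K\mathbf G^k(\q)\,dW^k,
\]
where $\mathcal A(\q)$ gathers the drift $\nu\Delta^2\psi_j-\beta\partial_x\psi_j$ (with the bottom friction $-r\Delta\psi_2$ on the second layer), the forcing $F$, the Stratonovich correctors $(F_1q_1,F_2q_2)^t$ and the transport nonlinearity $-\nabla^\perp\bfpsi\cdot\nabla\q$, while $\mathbf G^k(\q)=(G_1^kq_1,G_2^kq_2)^t$.

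First I would verify the structural hypotheses of the abstract formula. Writing $\nu\Delta^2\psi_j=\nu\Delta q_j-\nu S_j\Delta(\psi_{3-j}-\psi_j)$ via \eqref{eq:relation_q_psi} shows that the linear part of $\mathcal A(\q)$ maps $\cV$ into $\cV'$; the correctors are controlled by $F_j\in\mathcal L(H^2,L^2)$ together with their self-adjointness, and the only delicate term is the nonlinearity. For this I would use that $\q\in L^2_{\cF}(0,T;\cV)$ gives $\bfpsi\in L^2_{\cF}(0,T;\mathbf H^3)$, so that $\nabla^\perp\psi_j\in L^\infty$ by Sobolev embedding in dimension two and \autoref{preliminary non linear}(1) yields $\lVert\nabla^\perp\psi_j\cdot\nabla q_j\rVert_{H^{-1}}\lesssim\lVert\nabla^\perp\psi_j\rVert_{L^\infty}\lVert q_j\rVert$; combined with the a priori bound \eqref{energy L4} this places $\mathcal A(\q)$ in $L^2_{\cF}(0,T;\cV')$. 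The noise is even more regular: since $G_j^k\in\mathcal L(H^1;L^2)$ and $\q\in L^2_{\cF}(0,T;\cV)$, the map $\q\mapsto(\mathbf G^k(\q))_k$ is Hilbert--Schmidt into $\cH$ and lies in $L^2_{\cF}(0,T;\cH)$, which is precisely the regime covered by the variational It\^o formula.

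With these checks in place, the formula applied to the equivalent weighted norm $\vertiii{\q(t)}^2$ produces
\[
\tfrac12 d\vertiii{\q}^2=\langle\mathcal A(\q),\q\rangle\,dt+\tfrac12\sum_{k=1}^K\vertiii{\mathbf G^k(\q)}^2\,dt+\sum_{k=1}^K\langle\mathbf G^k(\q),\q\rangle\,dW^k,
\]
where the pairing carries the weights $h_1,h_2$. I would then evaluate $\langle\mathcal A(\q),\q\rangle$ term by term exactly as in \autoref{stime energia}, using Green's theorem (the fields $\bm\sigma_{j,k}$ being divergence free) and the relations \eqref{eq:relation_q_psi}, \eqref{eq:defS}, \eqref{eq:difference relation} to rewrite $h_j\nu\langle\Delta\psi_j,\Delta q_j\rangle$ and the friction term. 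The martingale term vanishes because $\langle G_j^kq_j,q_j\rangle=0$ by integration by parts, which explains the absence of a stochastic term in \eqref{ITO}. The decisive point is the \textbf{enstrophy cancellation}: since $q_j\in H^1$, two integrations by parts give $\langle F_jq_j,q_j\rangle=-\tfrac12\sum_k\lVert\bm\sigma_{j,k}\cdot\nabla q_j\rVert^2=-\tfrac12\sum_k\lVert G_j^kq_j\rVert^2$, so the Stratonovich corrector exactly annihilates the It\^o trace term $\tfrac12\sum_k\lVert G_j^kq_j\rVert^2$; after this cancellation the surviving terms reassemble into the stated identity.

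The hard part will be the rigorous justification of the variational It\^o formula in a setting where the noise takes values in $\cH$ rather than in $\cV'$, i.e.\ where the diffusion is more regular than the drift; this is exactly the framework of \cite{pardoux1975equations}, and the integrability verified above guarantees its applicability. An alternative that avoids the abstract formula is to pass to the limit directly in the Galerkin energy identity \eqref{ITO} integrated over $[0,t]$, exploiting the strong convergence $\q^N\to\q$ in $L^2(\Omega;L^2(0,T;\cV))$ and $\q^N(t)\to\q(t)$ in $L^2(\Omega;\cH)$ established above; there the only subtlety is the passage to the limit in the quadratic gradient term $\nu\vertiii{\nabla\q^N}^2$, which is handled by the strong $L^2$-convergence of the gradients.
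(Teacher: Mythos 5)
Your proposal is correct, but it takes a genuinely different route from the paper. The paper never invokes the abstract variational It\^o formula of \cite{pardoux1975equations}: instead it applies the (essentially finite-dimensional) It\^o formula to the spectral projections $\tilde{\q}^N=P^N\q$ of the solution itself, which solve a projected system driven by the full $\q$; the resulting energy identity carries the residual terms
\[
h_j\langle q_j,F_j\tilde{q}_j^N\rangle+\tfrac{h_j}{2}\sum_{k=1}^K\sum_{i=1}^N\langle G_j^k e_i,q_j\rangle^2,
\]
and the whole proof consists in showing that these vanish as $N\to\infty$, using only the convergence properties of $P^N$. In other words, the cancellation which in your argument is exact and pointwise, $\langle F_jq_j,q_j\rangle_{H^{-1},H^1}=-\tfrac12\sum_k\lVert G_j^kq_j\rVert^2$ (obtained by one integration by parts, not two, but that is immaterial), appears in the paper only asymptotically, in the form $\langle \bm{\sigma}_{j,k}\cdot\nabla q_j,P^N(\bm{\sigma}_{j,k}\cdot\nabla q_j)\rangle-\langle \bm{\sigma}_{j,k}\cdot\nabla q_j,\bm{\sigma}_{j,k}\cdot\nabla\tilde{q}_j^N\rangle\to 0$. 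What your Gelfand-triple argument buys is brevity and transparency --- the enstrophy preservation of the transport noise is visible in one line --- at the price of verifying the hypotheses of the abstract theorem; the only nontrivial one is that the drift lies in $L^2(\Omega\times[0,T];\cV')$, for which you correctly use \eqref{energy L4}, but note that \eqref{energy L4} is proved for the Galerkin approximations and must first be transferred to $\q$ (the paper asserts precisely this transfer in the paragraph preceding the proposition, so it is available to you). What the paper's projection argument buys is self-containedness: it needs neither the abstract theorem nor any fourth-moment bound on $\q$. Your fallback route --- passing to the limit in the integrated identity \eqref{ITO} using the strong convergences $\q^N\to\q$ in $L^2(\Omega;L^2(0,T;\cV))$ and $\q^N(t)\to\q(t)$ in $L^2(\Omega;\cH)$ --- also works and is closest in spirit to the paper, with the difference that the paper works with the projections $P^N\q$ rather than with the Galerkin solutions $\q^N$, which is exactly what lets it bypass the strong-convergence theorem altogether.
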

\begin{proof}
    Let $\Tilde{\mathbf{q}}^N$ be defined as in Lemma~\ref{crucial}. We already know by the properties of the projector $P^N$ that $\Tilde{\mathbf{q}}^N\rightarrow \mathbf{q}$ in $L^2(0,T; \cV)\cap C(0,T; \cH)\ \mathbb{P}$-a.s.\
    As $\mathbf{q}$ satisfies the weak formulation of \eqref{eq:QG_ito} with test functions $e_i$ we have
    \begin{align*}
        \left\langle q_1(t)  ,e_i\right\rangle  &  =\left\langle
        q_{1,0},e_i\right\rangle +\int_{0}^{t}\left\langle \Delta \psi_1(s)
        , \nu\Delta  e_i\right\rangle ds+\int_0^t \langle q_1(s), F_1 e_i\rangle\ ds+\int_0^t \langle q_1(s), \nabla^{\perp}\psi_1(s)\cdot\nabla e_i\rangle ds \\
        &  +\int_{0}^{t}\left\langle \beta \partial_x\psi_1\left (s\right)  ,e_i\right\rangle
        ds +\int_{0}^{t}\left\langle F(s)  ,e_i\right\rangle
        ds-\sum_{k=1}^K\int_{0}^{t}\left\langle q_1(s)  ,G^k_1e_i\right\rangle dW_{s}^{1,k}\\ 
        \left\langle q_2(t)  ,e_i\right\rangle  &  =\left\langle
        q_{2,0},e_i\right\rangle +\int_{0}^{t}\left\langle \Delta \psi_2(s)
        , \nu\Delta  e_i\right\rangle ds+\int_0^t \langle q_2(s), F_2 e_i\rangle\ ds+\int_0^t \langle q_2(s), \nabla^{\perp}\psi_2(s)\cdot\nabla e_i\rangle ds \\
        &  +\int_{0}^{t}\left\langle \beta \partial_x\psi_2\left (s\right)  ,e_i\right\rangle
        ds -r\int_{0}^{t}\left\langle \Delta \psi_2(s)  ,e_i\right\rangle
        ds-\sum_{k=1}^K\int_{0}^{t}\left\langle q_2(s)  ,G^k_2e_i\right\rangle dW_{s}^{2,k}. %
    \end{align*}
    Multiplying each equation by $e_i$ and summing up, we get
    \begin{align*}
    \begin{split}
        d\tilde{q_1}^N&=\nu\Delta^2 \tilde{\psi}_1^N dt+ \sum_{i=1}^N\left\langle\nabla^{\perp}\psi_1\cdot \nabla e_i, q_1\right\rangle e_i dt+\sum_{i=1}^N \langle F-\beta\partial_x \psi_1,e_i\rangle e_i dt\\ &  - \sum_{k=1}^K\sum_{i=1}^N\langle G^{k}_1e_i, q_1\rangle\  dW^{1,k}  +\sum_{i=1}^N \langle q_1,F_1 e_i\rangle e_i\, dt
        \end{split}\\
    \begin{split}
        d\tilde{q_2}^N&=\nu\Delta^2 \tilde{\psi}_2^N dt-r\Delta \tilde{\psi}_2^N dt+ \sum_{i=1}^N\left\langle\nabla^{\perp}\psi_2\cdot \nabla e_i, q_2\right\rangle e_i dt-\beta\sum_{i=1}^N \langle\partial_x \psi_2,e_i\rangle e_i dt\\ &  - \sum_{k=1}^K\sum_{i=1}^N\langle G^{k}_2e_i, q_2\rangle\  dW^{2,k}  +\sum_{i=1}^N \langle q_2,F_2 e_i\rangle e_i\, dt .
        \end{split}
    \end{align*}
    Now we can apply the It\^{o}'s formula to the process $\frac{\vertiii{\Tilde{\mathbf{q}}^N}}{2}^2=\frac{h_1 \lVert \tilde{q}_1^N\rVert^2+h_2 \lVert \tilde{q}_2^N\rVert^2}{2}$
    obtaining, thanks to the relations (\ref{eq:relation_q_psi}) and \eqref{eq:difference relation},
    \begin{align*}
            \frac{d \vertiii{{\mathbf{\tilde{q}}^N}}^2}{2}+\nu\vertiii{\nabla {\mathbf{\tilde{q}}^N}}^2 dt& =\left(-\beta h_1\langle \partial_x \psi_1^N,\tilde{q}_1^N\rangle-\beta h_2\langle \partial_x \psi_2^N,\tilde{q}_2^N\rangle+h_1\langle F,\tilde{q}_1^N\rangle-rh_2 \lVert \tilde{q}_2^N\rVert^2+Sr\langle \tilde{\psi}_1^N-\tilde{\psi}_2^N,\tilde{q}_2^N\rangle\right) dt\\ & +\left(S\nu\lVert \tilde{q}_1^N-\tilde{q}_2^N\rVert^2+\nu S(S_1+S_2)\langle \tilde{\psi}_1^N-\tilde{\psi}_2^N,\tilde{q}_1^N-\tilde{q}_2^N\rangle\right)dt\\ & +h_1\langle q_1, F_1\tilde{q}_1^N\rangle dt+h_2\langle q_2, F_2\tilde{q}_2^N\rangle dt+\frac{h_1}{2}\sum_{k=1}^K\sum_{i=1}^N \langle G_1^k e_i,q_1\rangle^2 dt+\frac{h_2}{2}\sum_{k=1}^K\sum_{i=1}^N \langle G_2^ke_i,q_2\rangle^2 dt\\ & -h_1\sum_{k=1}^K\langle q_1, G_1^K\tilde{q}_1^N\rangle dW^{1,k}-h_2\sum_{k=1}^K\langle q_2, G_1^K\tilde{q}_2^N\rangle dW^{2,k}.
    \end{align*}
    Then, thanks to the properties of the projector $P^N$ we get easily the desired formula. The only thing we need to prove is that 
    \begin{align*}
        h_1\langle q_1, F_1\tilde{q}_1^N\rangle+h_2\langle q_2, F_2\tilde{q}_2^N\rangle +\frac{h_1}{2}\sum_{k=1}^K\sum_{i=1}^N \langle G_1^k e_i,q_1\rangle^2 +\frac{h_2}{2}\sum_{k=1}^K\sum_{i=1}^N \langle G_2^ke_i,q_2\rangle^2 \rightarrow 0.
    \end{align*}
    The last relation is true, in fact for each $k\in \{1,\dots, K\},\ j\in \{1,2\}$ we have
    \begin{align*}
        &\sum_{i=1}^N \langle q_j(s),\bm{\sigma}_{j,k}\cdot \nabla e_i\rangle^2+\langle q_j(s),\bm{\sigma}_{j,k}\cdot\nabla\left(\bm{\sigma}_{j,k}\cdot\nabla \Tilde{q}_j^N\right)\rangle\\
        &= -\langle q_j(s),\bm{\sigma}_{j,k}\cdot \nabla \left(P^N(\bm{\sigma}_{j,k}\cdot \nabla q_j(s))\right)\rangle+\langle q_j(s),\bm{\sigma}_{j,k}\cdot\nabla\left(\bm{\sigma}_{j,k}\cdot\nabla \Tilde{q}_j^N\right)\rangle\\ 
        &=\langle \bm{\sigma}_{j,k}\cdot \nabla q_j(s),P^N\left(\bm{\sigma}_{j,k}\cdot \nabla q_j(s)\right)\rangle-\langle \bm{\sigma}_{j,k}\cdot\nabla q_j(s),\bm{\sigma}_{j,k}\cdot\nabla \Tilde{q}_j^N\rangle\rightarrow 0.
    \end{align*}
\end{proof}

\section{Enhanced dissipation by transport}\label{sec:viscosity}
%
In this section we will focus first on the proof of \autoref{thm galeati} and then, in \autoref{sec long time}, we will study the long time behaviour of the solutions of \eqref{eq:QG_ito GL} showing that \autoref{thm:convergence deterministic} holds.
%
\subsection{Convergence to the Deterministic Evolution Model}
\label{subsec:convergence}
%
The first step in order to prove Theorem~\ref{thm galeati}, is to rewrite equations (\ref{eq:QG_ito GL})-(\ref{eq:QG_deterministic GL}) in an alternative way, in order to introduce explicitly the term $\bm{\Delta}\q$.
For this reason, using once more the relation (\ref{eq:relation_q_psi}), we express $\bm{\Delta}^2 \bfpsi$ as \begin{align*}
     \bm{\Delta}^2 \bfpsi &=  \mathbf{\Delta} \left(  \q - M\bfpsi\right) = \bm{\Delta} \q+\mathbf{\Delta}M(-\mathbf{\Delta}-M)^{-1}\q.
\end{align*}
Thus if we call $\textbf{F}=\begin{bmatrix}F\\ -r\Delta \psi_2\end{bmatrix}$, equation (\ref{eq:QG_ito GL}) can be rewritten as
\begin{align}
\label{compact galeati stochastic}
\begin{split}
    d\q=\left((\kappa+\nu)\mathbf{\Delta} \q+\nu\mathbf{\Delta}M(-\mathbf{\Delta}-M)^{-1}\q-\nabla^{\perp}\bfpsi\cdot\nabla\q -\beta \partial_x\bfpsi+\textbf{F}\right)dt+\sqrt{2\kappa}\sum_{k\in K}\mathbf{a}_{k}e_k\cdot \nabla \q dW^{k}\\
    \bfpsi=-(-\mathbf{\Delta}-M)^{-1}\q\\ 
    \q(0)=\q_{0}, 
\end{split}    
\end{align}
where we denoted by 
\begin{align*}
    \mathbf{a}_{k}e_k\cdot \nabla \q dW^{k}:=\begin{bmatrix}\mathbf{a}_{1,k}e_k\cdot \nabla q_1 dW^{1,k} \\ \mathbf{a}_{2,k}e_k\cdot \nabla q_2 dW^{2,k}\end{bmatrix}
\end{align*}
Similarly, if we call $\Bar{\textbf{F}}=\begin{bmatrix}F\\ -r\Delta \bar\psi_2\end{bmatrix}$, the deterministic equation (\ref{eq:QG_deterministic GL}) can be rewritten as
\begin{align}\label{compact galeati deterministic}
\begin{split}
    d\Bar{\q}=\left((\kappa+\nu)\mathbf{\Delta} \Bar{\q}+\nu\mathbf{\Delta}M(-\mathbf{\Delta}-M)^{-1}\Bar{\q}-\nabla^{\perp}\bar\bfpsi\cdot\nabla\bar\q -\beta \partial_x\bar\bfpsi+\Bar{\textbf{F}}\right)dt\\
    \Bar{\bfpsi}=-(-\mathbf{\Delta}-M)^{-1}\Bar{\q}\\ 
    \Bar{\q}(0)=\q_{0}. 
\end{split}    
\end{align}
First, we want to show that the weak solution $\q$ of \eqref{eq:QG_ito GL} satisfies a mild formulation. 
Denote the stochastic integral and stochastic convolution as
\begin{align}
    \mathbf{M}(t):=\sqrt{2\kappa}\sum_{k\in K}\int_0^t \mathbf{a}_{k}e_k\cdot \nabla\q(s) dW^k_s  \label{eq:defM(t)}\\
    \mathbf{Z}(t):=\sqrt{2\kappa}\sum_{k\in K}\int_0^t  e^{(\kappa+\nu) (t-s)\mathbf{\Delta}}\left(\mathbf{a}_{k}e_k\cdot \nabla\q(s)\right)dW^k_s \label{eq:defZ(t)}.
\end{align}
Thanks to the results of Section \ref{sec well posed} for the stochastic system the following relations can be shown to hold
\begin{align}
    &\operatorname{sup}_{t\in[0,T]}\lVert\mathbf{q}(t)\rVert^2 \leq C\left(\lVert\mathbf{q}_0\rVert^2 +\int_0^T \lVert F(s)\rVert^2 ds \right) e^{CT}=:R_T^2,\ \as \label{eq:defR_T}\\
    &\operatorname{sup}_{t\in[0,T]}\lVert\bar{\q}(t)\rVert^2 \leq R_T^2 \label{eq:sup q bar}\\
    & \int_0^T\lVert \nabla {\q}(s)\rVert^2 \ ds \leq \frac{C}{\nu} \left(TR_T^2+\int_0^T\lVert F(s)\rVert^2\ ds\right) \label{eq:int nabla q}\\
    &\int_0^T\lVert \nabla \bar{\q}(s)\rVert^2 \ ds \leq \frac{C}{\kappa+\nu} \left(TR_T^2+\int_0^T\lVert F(s)\rVert^2\ ds\right).\label{eq:int nabla q bar}
\end{align}
The constants $C,R_T$ here above depend from all the parameters of the model (i.e. $T,\nu,r,F,\q_0,M,\beta$), except for the parameters of the noise. Thanks to these estimates, Assumption~2.4 of \cite{flandoli2021quantitative} holds. Thus, thanks to Corollary~2.6 in \cite{flandoli2021quantitative}, the stochastic integral \eqref{eq:defM(t)} and the stochastic convolution \eqref{eq:defZ(t)}
are well defined and have the regularity prescribed by the Lemma below. 
\begin{lemma}
\label{regularity stochastic convolution}
    Given the processes \eqref{eq:defM(t)} and \eqref{eq:defZ(t)} the following statements hold true:
    \begin{itemize}
        \item[(i)] $\mathbf{M}(t)$ is a continuous martingale with values in $\mathbf{H}^{-1}$. Moreover it holds \begin{align*}
            \E \left[\operatorname{sup}_{t\in[0,T]}\lVert \mathbf{M}(t)\rVert_{\mathbf{H}^{-1}}^2\right]\lesssim \kappa R_T^2T.
        \end{align*}
        \item[(ii)] For each $\epsilon\in (0,1/2), \ p\geq 1$
    \begin{align}\label{sc est1}
        \E \left[\operatorname{sup}_{t\in [0,T]}\lVert \mathbf{Z}(t)\rVert_{\mathbf{H}^{-\epsilon}}^p\right]^{1/p}\lesssim_{\epsilon,p,T}\sqrt{\kappa(\nu+\kappa)^{\epsilon-1}}R_T,
    \end{align}
    \begin{align}\label{sc est2}
        \E \left[\operatorname{sup}_{t\in [0,T]}\lVert \mathbf{Z}(t)\rVert_{\mathbf{H}^{-1-\epsilon}}^p\right]^{1/p}\lesssim_{\epsilon,p,T}\sqrt{\kappa(\nu+\kappa)^{\epsilon-1}}\lVert \theta \rVert_{\ell^{\infty}}R_T.
    \end{align}
    \item[(iii)] For $\beta\in (0,1]$ and $\epsilon\in (0,\beta]$, $p\geq 1$ it holds
    \begin{align}\label{sc est3}
        \E \left[\operatorname{sup}_{t\in [0,T]}\lVert \mathbf{Z}(t)\rVert_{\mathbf{H}^{-\beta}}^p\right]^{1/p}\lesssim_{\epsilon,p,T}\sqrt{\kappa(\nu+\kappa)^{\epsilon-1}}\lVert \theta \rVert_{\ell^{\infty}}^{\beta-\epsilon}R_T.
    \end{align}
    \end{itemize}
\end{lemma}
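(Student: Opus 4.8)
The plan is to recognize that the bounds \eqref{eq:defR_T}--\eqref{eq:int nabla q bar} established for the stochastic system are exactly the hypotheses (Assumption~2.4 of \cite{flandoli2021quantitative}) under which the quantitative stochastic-convolution estimates of that paper apply; thus the quickest route is to verify those hypotheses and invoke Corollary~2.6 of \cite{flandoli2021quantitative}. Because the whole point is how the noise parameters $\kappa$ and $\|\theta\|_{\ell^\infty}$ enter the constants, I would also reconstruct the estimates directly from three ingredients: the Burkholder--Davis--Gundy (BDG) inequality, which reduces the time-supremum to the quadratic variation; the mapping bounds for $\bm{\sigma}_{j,k}\cdot\nabla(\cdot)$ from \autoref{preliminary non linear}; and the smoothing estimates for the semigroup from \autoref{lemma:12_2.2}, read off with diffusion constant $\kappa+\nu$. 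The latter point is decisive: every negative power of $(\nu+\kappa)$ in the statement originates from the rescaled smoothing $\|e^{(\kappa+\nu)t\mathbf{\Delta}}\cdot\|_{\mathbf{H}^{\alpha+\rho}}\lesssim ((\kappa+\nu)t)^{-\rho/2}\|\cdot\|_{\mathbf{H}^{\alpha}}$.

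For (i), the integrand $\sqrt{2\kappa}\,\mathbf{a}_{k}e_k\cdot\nabla\mathbf{q}$ is adapted, and by item~1 of \autoref{preliminary non linear} together with $\|e_k\|_{L^\infty}=L^{-1}$ and $|\mathbf{a}_{j,k}|=|\theta_{j,k}|$ it lies in $L^2_{\mathcal{F}}(0,T;\mathbf{H}^{-1})$; hence $\mathbf{M}$ is a continuous square-integrable $\mathbf{H}^{-1}$-valued martingale. Applying BDG and then item~1 of \autoref{preliminary non linear} componentwise produces a quadratic-variation bound of the shape $\kappa\sum_{k}\theta_{j,k}^2\int_0^T\|\mathbf{q}(s)\|^2\,ds$; the normalization $\sum_k\theta_{j,k}^2=1$ collapses the $k$-sum, and \eqref{eq:defR_T} yields $\E[\sup_t\|\mathbf{M}(t)\|_{\mathbf{H}^{-1}}^2]\lesssim\kappa R_T^2 T$.

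For (ii) I would treat the convolution by the factorization method, writing $\mathbf{Z}(t)=c_\alpha\int_0^t(t-s)^{\alpha-1}e^{(\kappa+\nu)(t-s)\mathbf{\Delta}}Y_\alpha(s)\,ds$, where $Y_\alpha$ is the analogous stochastic integral carrying the weight $(s-r)^{-\alpha}$; one bounds $\E\|Y_\alpha(s)\|^p$ by BDG as in (i) and then estimates the deterministic convolution by Hölder in time together with \autoref{lemma:12_2.2}, choosing $\alpha$ compatibly with $\epsilon$ and $p$. For the $\mathbf{H}^{-\epsilon}$ bound one measures the integrand in $\mathbf{H}^{-1}$ (via the $L^\infty$ estimate, so only $\sum_k\theta_{j,k}^2=1$ is used) and lets the semigroup recover $1-\epsilon$ derivatives, costing $((\kappa+\nu)(t-s))^{-(1-\epsilon)/2}$; since $\epsilon>0$ the kernel $(t-s)^{-(1-\epsilon)}$ is integrable, and one is left precisely with $\sqrt{\kappa(\nu+\kappa)^{\epsilon-1}}R_T$. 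The $\mathbf{H}^{-1-\epsilon}$ bound carries the same $(\nu+\kappa)$-power but measures the quadratic variation one derivative lower, where a finer Fourier-space estimate of $\sum_k\theta_{j,k}^2\,|\mathbf{a}_{j,k}e_k\cdot\nabla(\cdot)|^2$ — exploiting the isotropy condition $\theta_{j,k}=\theta_{j,l}$ for $|k|=|l|$ — exchanges one unit of regularity for the single factor $\|\theta\|_{\ell^\infty}$; this is the content of Corollary~2.6 of \cite{flandoli2021quantitative}, which I would cite rather than reprove.

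Finally, (iii) follows from (ii) by interpolation: the pointwise Sobolev inequality $\|X\|_{\mathbf{H}^{-\beta}}\le\|X\|_{\mathbf{H}^{-\epsilon}}^{1-a}\|X\|_{\mathbf{H}^{-1-\epsilon}}^{a}$ with $a=\beta-\epsilon\in[0,1]$ (valid since $\epsilon\le\beta\le 1$), followed by Hölder in $\Omega$ with exponents $\tfrac{1}{1-a},\tfrac{1}{a}$, interpolates the two bounds of (ii); the common prefactor $\sqrt{\kappa(\nu+\kappa)^{\epsilon-1}}R_T$ survives to the first power while the $\|\theta\|_{\ell^\infty}$-dependence becomes $\|\theta\|_{\ell^\infty}^{\,a}=\|\theta\|_{\ell^\infty}^{\,\beta-\epsilon}$, as claimed. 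The main obstacle is the constant bookkeeping in (ii): one must verify that each smoothing step carries a power of $\kappa+\nu$ rather than of $\nu$ alone, confirm the time-integrability threshold is governed by $\epsilon>0$, and, above all, justify the exact emergence of $\|\theta\|_{\ell^\infty}$ (and not a cruder frequency- or cardinality-dependent factor) in the lowest-regularity estimate — which is why appealing directly to \cite{flandoli2021quantitative} is the pragmatic choice.
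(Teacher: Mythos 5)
Your proposal is correct and takes essentially the same route as the paper: the paper offers no independent proof of this lemma, but merely observes that the a priori bounds \eqref{eq:defR_T}--\eqref{eq:int nabla q bar} verify Assumption~2.4 of \cite{flandoli2021quantitative} and then invokes Corollary~2.6 of that paper, which is exactly your primary argument. Your supplementary BDG/factorization/interpolation sketch is a plausible reconstruction of how such estimates are proved, but since you too defer to Corollary~2.6 at the decisive point (the emergence of $\lVert\theta\rVert_{\ell^\infty}$ in the lowest-regularity bound), it does not constitute a genuinely different proof.
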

It is then easy to show that the weak solution $\q$ of the problem (\ref{eq:QG_deterministic GL}) satisfies also a mild formulation. In fact the following lemma holds.
\begin{lemma}\label{mild}
    If we denote by $\mathbf{G}(t):=\nu\mathbf{\Delta}M(-\mathbf{\Delta}-M)^{-1}\q(t)-\nabla^{\perp}\bfpsi(t)\cdot\nabla\q(t) -\beta \partial_x\bfpsi(t)+\textbf{F}(t)$, then 
    \begin{align*}
        \q(t)=e^{(k+\nu)t\mathbf{\Delta}}\q_0+\int_0^t e^{(k+\nu)(t-s)\mathbf{\Delta}}\mathbf{G}(s)ds+\mathbf{Z}(t),\ \ \mathbb{P}-a.s. \ \ \forall t\in[0,T].
    \end{align*}
\end{lemma}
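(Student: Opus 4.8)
The plan is to establish the equivalence between the weak formulation and the mild formulation by testing the equation against the time-reversed semigroup orbit of a fixed smooth test function, following the classical correspondence between analytically weak and mild solutions (see e.g.\ \cite[Chapter~6]{DaPratoZab2014}). Throughout, the relevant generator is $(\kappa+\nu)\mathbf{\Delta}$, which produces the analytic, self-adjoint semigroup $e^{(\kappa+\nu)t\mathbf{\Delta}}$.

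First I would record that, by Definition~\ref{weak solution qg} rewritten in the form \eqref{compact galeati stochastic}, for every $\bm{\phi}\in D(\mathbf{\Delta})$ and every $t\in[0,T]$,
\begin{equation*}
\langle\q(t),\bm{\phi}\rangle=\langle\q_0,\bm{\phi}\rangle+\int_0^t\langle\q(s),(\kappa+\nu)\mathbf{\Delta}\bm{\phi}\rangle\,ds+\int_0^t\langle\mathbf{G}(s),\bm{\phi}\rangle_{\mathbf{H}^{-1},\mathbf{H}^1}\,ds+\langle\mathbf{M}(t),\bm{\phi}\rangle,
\end{equation*}
where the Laplacian has been moved onto the smooth test function using self-adjointness of $\mathbf{\Delta}$ and the periodic boundary conditions, and $\mathbf{M}(t)$ is the martingale \eqref{eq:defM(t)}. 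The pairing with $\mathbf{G}$ is the $\mathbf{H}^{-1}$--$\mathbf{H}^{1}$ duality, which is legitimate because the nonlinearity $\nabla^{\perp}\bfpsi\cdot\nabla\q$ lies in $\mathbf{H}^{-1}$ by \autoref{preliminary non linear} while $\bm{\phi}\in\mathbf{H}^2\subset\mathbf{H}^1$; the remaining pieces of $\mathbf{G}$ are even more regular, so that $\mathbf{G}\in L^2(0,T;\mathbf{H}^{-1})$.

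Next I would fix $t\in[0,T]$ and $\bm{\phi}\in D(\mathbf{\Delta})$, and apply the above identity with the time-dependent test function $s\mapsto\bm{\phi}(s):=e^{(\kappa+\nu)(t-s)\mathbf{\Delta}}\bm{\phi}$, which stays in $D(\mathbf{\Delta})$ for $s\in[0,t]$ by the smoothing of the analytic semigroup and satisfies $\partial_s\bm{\phi}(s)=-(\kappa+\nu)\mathbf{\Delta}\bm{\phi}(s)$. Applying the It\^o product rule to $s\mapsto\langle\q(s),\bm{\phi}(s)\rangle$, the drift term $\int_0^t\langle\q(s),(\kappa+\nu)\mathbf{\Delta}\bm{\phi}(s)\rangle\,ds$ exactly cancels $\int_0^t\langle\q(s),\partial_s\bm{\phi}(s)\rangle\,ds$, leaving
\begin{equation*}
\langle\q(t),\bm{\phi}\rangle=\langle\q_0,e^{(\kappa+\nu)t\mathbf{\Delta}}\bm{\phi}\rangle+\int_0^t\langle\mathbf{G}(s),e^{(\kappa+\nu)(t-s)\mathbf{\Delta}}\bm{\phi}\rangle\,ds+\sqrt{2\kappa}\sum_{k\in K}\int_0^t\langle\mathbf{a}_ke_k\cdot\nabla\q(s),e^{(\kappa+\nu)(t-s)\mathbf{\Delta}}\bm{\phi}\rangle\,dW^k_s.
\end{equation*}
Using once more the self-adjointness of the semigroup to move $e^{(\kappa+\nu)(t-s)\mathbf{\Delta}}$ onto the first factor in each term, and the stochastic Fubini theorem (whose hypotheses hold thanks to \autoref{regularity stochastic convolution}) to pull the inner product inside the stochastic integral, the right-hand side becomes $\langle e^{(\kappa+\nu)t\mathbf{\Delta}}\q_0+\int_0^te^{(\kappa+\nu)(t-s)\mathbf{\Delta}}\mathbf{G}(s)\,ds+\mathbf{Z}(t),\,\bm{\phi}\rangle$, with $\mathbf{Z}$ the stochastic convolution \eqref{eq:defZ(t)}.

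Since $D(\mathbf{\Delta})$ is dense in $\cH$ and separates points of $\mathbf{H}^{-1}$, the equality of the two pairings for all $\bm{\phi}$ yields the claimed identity in $\mathbf{H}^{-1}$ for each fixed $t$, $\bP$-a.s.; the deterministic convolution is well defined and $t$-continuous by \autoref{lemma:12_2.3} applied to $\mathbf{G}\in L^2(0,T;\mathbf{H}^{-1})$, while $\q$ and $\mathbf{Z}$ are $t$-continuous by Section~\ref{sec well posed} and \autoref{regularity stochastic convolution}, so a continuity argument upgrades the pointwise-in-$t$ statement to a statement valid for all $t\in[0,T]$ simultaneously, $\bP$-a.s. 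I expect the main technical obstacle to be the rigorous justification of the It\^o product rule for $\langle\q(s),\bm{\phi}(s)\rangle$ with a time-dependent, merely $\mathbf{H}^2$-valued test function and an $\mathbf{H}^{-1}$-valued drift: this is where the low regularity of the nonlinearity interacts with the time dependence of the test function, and it must be handled by approximating $\bm{\phi}(s)$ by test functions piecewise constant in time (valued in $D(\mathbf{\Delta})$) and passing to the limit using the a priori bounds \eqref{eq:defR_T}--\eqref{eq:int nabla q}, rather than by a direct appeal to a finite-dimensional It\^o formula.
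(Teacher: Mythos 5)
Your proposal is correct and rests on the same underlying principle as the paper's proof --- the classical weak-to-mild correspondence obtained by testing against the backward semigroup orbit --- but the two implementations differ in an instructive way. You work with a general test function $\bm{\phi}\in D(\mathbf{\Delta})$ and the time-dependent family $s\mapsto e^{(\kappa+\nu)(t-s)\mathbf{\Delta}}\bm{\phi}$, which forces you to confront exactly the obstacle you flag at the end: an It\^{o} product rule for a time-dependent, $\mathbf{H}^2$-valued test function paired against an $\mathbf{H}^{-1}$-valued drift, which you then propose to justify by piecewise-constant-in-time approximation. That can be made to work, but the paper sidesteps it entirely by choosing as test functions $\bm{e}_i=(e_i,e_i)^t$, i.e.\ eigenfunctions of $\mathbf{\Delta}$: on these the semigroup acts by scalar multiplication, $e^{(\kappa+\nu)(t-s)\mathbf{\Delta}}\bm{e}_i=e^{-(t-s)(\kappa+\nu)\lambda_i}\bm{e}_i$, so the key step reduces to the elementary one-dimensional integrating-factor computation: apply the scalar It\^{o} formula to $e^{-t(\kappa+\nu)\lambda_i}\langle\q(t),\bm{e}_i\rangle$, a product of a smooth deterministic scalar function with a real semimartingale, and recognize the resulting identity (valid on one full-measure set, for all $i\in\mathbb{Z}^2_0$ and all $t$) as the mild formulation written in Fourier modes. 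No infinite-dimensional It\^{o} formula, no approximation argument, and no density step are needed; note also that in your argument the interchange of the pairing with the stochastic integral requires only that bounded linear functionals commute with Hilbert-space-valued It\^{o} integrals, not a genuine stochastic Fubini theorem. Your route is more general --- it would survive a non-self-adjoint generator or one without an accessible eigenbasis --- but on the flat torus, where $\mathbf{\Delta}$ is diagonalized by the very basis $\{e_k\}$ used to build the noise, the paper's specialization turns the only hard step of your argument into a triviality: replacing your generic $\bm{\phi}$ by $\bm{e}_i$ from the start makes your proof collapse onto the paper's.
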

\begin{proof}
    By definition, if $\q$ is a weak solution of problem \eqref{eq:QG_ito GL}, then it satisfies 
    \begin{align*}
        \langle \q(t),\bm{\phi}\rangle-\langle \q_0,\bm{\phi}\rangle=\int_0^t \langle(\kappa+\nu)\bm{\Delta}\q(s) +\mathbf{G}(s), \bm{\phi}\ \rangle_{\mathbf{H}^{-2},\mathbf{H}^2}ds-\sqrt{2\kappa}\sum_{k\in K}\int_0^t \langle\mathbf{a}_{k}e_k\cdot \nabla\bm{\phi},\q(s) \rangle dW^k_s
    \end{align*}
    $\mathbb{P}$-a.s. for each $t\in[0,T]$ and $\bm{\phi}\in D(-\bm{\Delta})$.
    If take $\bm{e}_i:=(e_i,e_i)^t$ as a test function, we have
    \begin{align*}
        \langle \q(t),\bm{e}_i\rangle-\langle \q_0,\bm{e}_i\rangle=\int_0^t \langle \lambda_i(\kappa+\nu)\q(s) +\mathbf{G}(s), \bm{e}_i \rangle_{\mathbf{H}^{-2},\mathbf{H}^2} ds-\sqrt{2\kappa}\sum_{k\in K}\int_0^t \langle\mathbf{a}_{k}e_k\cdot \nabla\bm{e}_i,\q(s) \rangle dW^k_s.
    \end{align*}
    If we apply It\^{o} formula to the process $e^{-t(\kappa+\nu)\lambda_i}\langle \q(t),\bm{e}_i\rangle$ and integrate in time we have
    \begin{align*}
        \langle \q(t),\bm{e}_i\rangle&= e^{-t(\kappa+\nu)\lambda_i}\langle \q_0,\bm{e}_i\rangle+\int_0^t e^{-(t-s)(\kappa+\nu)\lambda_i}\langle \mathbf{G}(s), \bm{e}_i \rangle ds\\ &-\sqrt{2\kappa}\sum_{k\in K}\int_0^t e^{-(t-s)(\kappa+\nu)\lambda_i} \langle\mathbf{a}_{k}e_k\cdot \nabla\bm{e}_i,\q(s) \rangle dW^k_s.
    \end{align*}
    We can then find $\Gamma\subseteq \Omega$ of full probability such that the above equality holds for all $t \in[0,T]$and all $i\in\mathbb{Z}^2_0$. But this is exactly the mild formulation written in Fourier modes.
\end{proof}
\begin{remark}\label{mild det}
    Similarly to Lemma~\ref{mild}, also the solution $\bar\q$ of the associated determinsitic equation \eqref{eq:QG_deterministic GL} satisfies for all $t\in [0,T]$ the integral relation
    \begin{align*}
        \bar\q(t)=e^{(k+\nu)t\mathbf{\Delta}}\q_0+\int_0^t e^{(k+\nu)(t-s)\mathbf{\Delta}}\left(\nu\mathbf{\Delta}M(-\mathbf{\Delta}-M)^{-1}\bar\q(s)-\nabla^{\perp}\bar\bfpsi(s)\cdot\nabla\bar\q(s) -\beta \partial_x\bar\bfpsi(s)+\bar{\textbf{F}}(s)\right)ds.
    \end{align*}
\end{remark}
Before proving Theorem~\ref{thm galeati}, we need a preliminary result on the nonlinearity of our problem.
\begin{lemma}\label{lipshitz nonlinear}
    Let  $\q\in \mathbf{L}^2,\ \bar{\q}\in \mathbf{H}^1$, then, given 
    \begin{align*}
      \bm{R}(\q)=-\nabla^{\perp}(-\mathbf{\Delta}-M)^{-1}\q\cdot \nabla \q,
    \end{align*}
    for each $\alpha\in (0,1)$ the following relation holds true
    \begin{align*}
        \lVert \bm{R}(\q)-\bm{R}(\bar{\q})\rVert_{\mathbf{H}^{-1-\alpha}}\lesssim_{\alpha,M}\lVert \q-\bar{\q}\rVert_{\mathbf{H}^{-\alpha}}\left(\lVert \q\rVert+\lVert \bar{\q}\rVert_{\mathbf{H}^1}\right).
    \end{align*}
\end{lemma}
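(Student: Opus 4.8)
The plan is to write the difference of the two quadratic nonlinearities as a sum of two transport terms in which one factor is a fixed divergence-free velocity and the other a scalar, so that the estimates of \autoref{preliminary non linear} (in their vector-field form) apply. Throughout I would set $\bfpsi=(-\mathbf{\Delta}-M)^{-1}\q$ and $\bar\bfpsi=(-\mathbf{\Delta}-M)^{-1}\bar\q$, so that $\bm{R}(\q)=-\nabla^{\perp}\bfpsi\cdot\nabla\q$ and $\bm{R}(\bar\q)=-\nabla^{\perp}\bar\bfpsi\cdot\nabla\bar\q$. First I would record the mapping properties of the resolvent: since $(-\mathbf{\Delta}-M)^{-1}:\mathbf{H}^k\to\mathbf{H}^{k+2}$ is bounded, one has $\bfpsi-\bar\bfpsi\in\mathbf{H}^{2-\alpha}$ with $\lVert\bfpsi-\bar\bfpsi\rVert_{\mathbf{H}^{2-\alpha}}\lesssim_M\lVert\q-\bar\q\rVert_{\mathbf{H}^{-\alpha}}$ (using $\q-\bar\q\in\mathbf{H}^{-\alpha}$, which holds because $\mathbf{L}^2,\mathbf{H}^1\hookrightarrow\mathbf{H}^{-\alpha}$), and $\bar\bfpsi\in\mathbf{H}^3$ with $\lVert\bar\bfpsi\rVert_{\mathbf{H}^3}\lesssim_M\lVert\bar\q\rVert_{\mathbf{H}^1}$.

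Next I would add and subtract the cross term $\nabla^{\perp}\bar\bfpsi\cdot\nabla\q$ to obtain
\begin{align*}
\bm{R}(\q)-\bm{R}(\bar\q)=-\nabla^{\perp}(\bfpsi-\bar\bfpsi)\cdot\nabla\q-\nabla^{\perp}\bar\bfpsi\cdot\nabla(\q-\bar\q).
\end{align*}
The decomposition is chosen so that in each summand the rough factor is paired with a sufficiently smooth velocity. For the first term the velocity $\nabla^{\perp}(\bfpsi-\bar\bfpsi)$ lies only in $\mathbf{H}^{1-\alpha}$, but the scalar $\q$ sits in $\mathbf{L}^2$; applying item~4 of \autoref{preliminary non linear} with $\beta=\alpha\in(0,1)$ gives
\begin{align*}
\lVert\nabla^{\perp}(\bfpsi-\bar\bfpsi)\cdot\nabla\q\rVert_{\mathbf{H}^{-1-\alpha}}\lesssim_{\alpha}\lVert\nabla^{\perp}(\bfpsi-\bar\bfpsi)\rVert_{\mathbf{H}^{1-\alpha}}\lVert\q\rVert\lesssim_{\alpha,M}\lVert\q-\bar\q\rVert_{\mathbf{H}^{-\alpha}}\lVert\q\rVert.
\end{align*}

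For the second term the velocity $\nabla^{\perp}\bar\bfpsi$ is now smooth, living in $\mathbf{H}^{2}$, while the scalar $\q-\bar\q$ is rough, only in $\mathbf{H}^{-\alpha}$; here I would invoke item~2 of \autoref{preliminary non linear} with its exponent $2\in(1,2]$ and $\beta=\alpha$ (the constraint $\alpha\in(0,2-1)=(0,1)$ being exactly our hypothesis), obtaining
\begin{align*}
\lVert\nabla^{\perp}\bar\bfpsi\cdot\nabla(\q-\bar\q)\rVert_{\mathbf{H}^{-1-\alpha}}\lesssim_{\alpha}\lVert\nabla^{\perp}\bar\bfpsi\rVert_{\mathbf{H}^{2}}\lVert\q-\bar\q\rVert_{\mathbf{H}^{-\alpha}}\lesssim_{\alpha,M}\lVert\bar\q\rVert_{\mathbf{H}^1}\lVert\q-\bar\q\rVert_{\mathbf{H}^{-\alpha}}.
\end{align*}
Summing the two bounds yields the claim.

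The computation is essentially routine once the decomposition is in place; the point requiring care — and the real content — is the asymmetry forced by the hypotheses $\q\in\mathbf{L}^2$ and $\bar\q\in\mathbf{H}^1$. One must split the bilinear difference so that the derivative $\nabla$ always falls on the factor that can absorb it: in the first term on $\q\in\mathbf{L}^2$ (handled by item~4, which tolerates an $L^2$ scalar paired with an $\mathbf{H}^{1-\alpha}$ velocity), and in the second on the rough difference $\q-\bar\q\in\mathbf{H}^{-\alpha}$ (handled by item~2, which requires a velocity in $\mathbf{H}^{\alpha'}$ with $\alpha'>1+\alpha$, met here by $\alpha'=2$ thanks to $\bar\q\in\mathbf{H}^1$). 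The opposite pairing would demand $\q\in\mathbf{H}^1$ and break the argument, so matching each term to the correct item of \autoref{preliminary non linear} is the crux.
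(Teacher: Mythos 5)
Your proof is correct and follows essentially the same route as the paper: you add and subtract the same cross term $\nabla^{\perp}\bar\bfpsi\cdot\nabla\q$, producing exactly the paper's two terms $\nabla^{\perp}(-\mathbf{\Delta}-M)^{-1}(\q-\bar\q)\cdot\nabla\q$ and $\nabla^{\perp}(-\mathbf{\Delta}-M)^{-1}\bar\q\cdot\nabla(\q-\bar\q)$, and you estimate them with the same two items (point~2 with velocity in $\mathbf{H}^2$, point~4 with $\beta=\alpha$) of \autoref{preliminary non linear}, combined with the boundedness of $(-\mathbf{\Delta}-M)^{-1}:\mathbf{H}^{k}\to\mathbf{H}^{k+2}$. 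Your closing remark on why the derivative must fall on the factor that can absorb it matches the implicit logic of the paper's choice of decomposition.
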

\begin{proof}
    With a simple manipulation and the triangular inequality we have
    \begin{align*}
            \lVert \bm{R}(\q)-\bm{R}(\bar{\q})\rVert_{\mathbf{H}^{-1-\alpha}}& \leq   \lVert \nabla^{\perp}(-\mathbf{\Delta}-M)^{-1}\bar{\q}\cdot \nabla (\q-\bar{\q}) \rVert_{\mathbf{H}^{-1-\alpha}}+ \lVert \nabla^{\perp}(-\mathbf{\Delta}-M)^{-1}(\q-\bar{\q})\cdot \nabla \q \rVert_{\mathbf{H}^{-1-\alpha}}\\
            &=: I_1 + I_2.
    \end{align*}
    Then the thesis follows by Lemma~\ref{preliminary non linear}. In fact by point 2, we have  
    \begin{align*}
        I_1\lesssim_{\alpha, M} \lVert \nabla^{\perp}(-\mathbf{\Delta}-M)^{-1}\bar{\q} \rVert_{\mathbf{H}^2}\lVert \bar\q-\q \rVert_{\mathbf{H}^{-\alpha}}\lesssim \lVert \bar\q\rVert_{\mathbf{H}^{1}} \lVert \bar\q-\q \rVert_{\mathbf{H}^{-\alpha}},
    \end{align*}
    and by point $4$ it follows 
    \begin{align*}
        I_2\lesssim_{\alpha,M} \lVert \q\rVert \lVert \nabla^{\perp}(-\mathbf{\Delta}-M)^{-1}(\q-\bar{\q}) \rVert_{\mathbf{H}^{1-\alpha}}\lesssim_{\alpha,M} \lVert \q\rVert \lVert \q-\bar{\q}\rVert_{\mathbf{H}^{-\alpha}}.
    \end{align*}
\end{proof}
We are now ready to show the main result of this section, \autoref{thm galeati}:
\begin{proof}[Proof of Theorem~\ref{thm galeati}]
    We have seen in Lemma~\ref{mild} and Remark \ref{mild det} that both $\q$ and $\bar{\q}$ satisfies a mild formulation. Thus calling $\bm{\xi}=\q-\bar{\q}$ and $\bm{\chi}=\bfpsi-\bar{\bfpsi}$ we have
    \begin{multline*}
        \bm{\xi}(t)=\int_0^t e^{(\kappa+\nu)(t-s)\bm{\Delta}}\left(\nu\mathbf{\Delta}M(-\mathbf{\Delta}-M)^{-1}\bm{\xi}(s)-\beta \partial_x\bm{\chi}(s)+{\textbf{F}}(s)-\bar{\textbf{F}}(s)\right)ds\\ -\int_0^t e^{(\kappa+\nu)(t-s)\bm{\Delta}}\left(\bm{R}(\q(s))-\bm{R}(\bar{\q}(s))\right)ds+\bm{Z}(t).
    \end{multline*}
    By \autoref{lemma:12_2.3} we have 
    \begin{multline*}
         \lVert\bm{\xi}(t)\rVert_{\mathbf{H}^{-\alpha}}^2 \lesssim_{\alpha,M} \lVert \bm{Z}(t)\rVert_{\mathbf{H}^{-\alpha}}^2 +\frac{1}{\kappa+\nu}\int_0^t\lVert \bm{R}(\q(s))-\bm{R}(\bar{\q}(s)) \rVert_{\mathbf{H}^{-\alpha-1}}^2\ ds \\  + \frac{1}{\kappa+\nu}\int_0^t  \nu^2\|\mathbf{\Delta}M(-\mathbf{\Delta}-M)^{-1}\bm{\xi}(s)\|^2_{H^{-\alpha -1}} + \beta^2\| \partial_x\bm{\chi}(s)\|^2_{\bH^{-\alpha -1}} + r^2\|\Delta\chi_2(s)\|^2_{H^{-\alpha -1}}\, ds 
    \end{multline*}
    and by the relation \eqref{eq:relation_q_psi} it follows
    \begin{equation*}
        \lVert\bm{\xi}(t)\rVert_{\mathbf{H}^{-\alpha}}^2 \lesssim_{\alpha,M} \lVert \bm{Z}(t)\rVert_{\mathbf{H}^{-\alpha}}^2+\frac{1}{\kappa+\nu}\int_0^t\lVert \bm{R}(\q(s))-\bm{R}(\bar{\q}(s)) \rVert_{\mathbf{H}^{-\alpha-1}}^2\ ds  + \frac{\beta^2+\nu^2+r^2}{\kappa+\nu}\int_0^t \lVert \bm{\xi}(s) \rvert_{\mathbf{H}^{-\alpha-1}}^2\ ds.
    \end{equation*}
    Last, thanks to \autoref{lipshitz nonlinear}
    \begin{align*}
        \lVert\bm{\xi}(t)\rVert_{\mathbf{H}^{-\alpha}}^2
        & \lesssim_{\alpha,M}\lVert \bm{Z}(t)\rVert_{\mathbf{H}^{-\alpha}}^2+ \frac{1}{\kappa+\nu}\int_0^t \lVert \bm{\xi}(s)\rVert_{\mathbf{H}^{-\alpha}}^2\left(\lVert \q(s)\rVert^2+\lVert \bar{\q}(s)\rVert_{V}^2+\beta^2+\nu^2+r^2\right)\ ds.
    \end{align*}
    Therefore, by Gronwall's lemma, there exists $C=C(\alpha,M)$ such that
    \begin{align}\label{Gronwall estimate}
         \lVert\bm{\xi}(t)\rVert_{\mathbf{H}^{-\alpha}}^2\lesssim_{\alpha,M} \left(\operatorname{sup}_{t\in[0,T]}\lVert \bm{Z}(t)\rVert_{\mathbf{H}^{-\alpha}}^2\right)\exp\left(\frac{C}{\nu+\kappa}\int_0^T \lVert \q(s)\rVert^2+\lVert \bar\q(s)\rVert_V^2\ ds\right)\exp\left(T\frac{\nu^2+\beta^2+r^2}{\kappa+\nu}\right).
    \end{align}
    Now we take the expectation of \eqref{Gronwall estimate} and use relation (\ref{sc est3}) in \autoref{regularity stochastic convolution} to estimate the stochastic convolution to get 
    \begin{equation}
    \label{eq:E Growall estimate}
         \E\, \lVert\bm{\xi}(t)\rVert_{\mathbf{H}^{-\alpha}}^2 \lesssim_{\alpha,M, \epsilon, T} \frac{\kappa}{ (\nu + \kappa)^{1- \epsilon}} \|\theta\|_{\ell^\infty}^{2(\alpha - \epsilon)} R_T^2 \exp\left(T\frac{\nu^2+\beta^2+r^2}{\kappa+\nu}\right) \E\, \exp\left(\frac{C}{\nu+\kappa}\int_0^T \lVert \q(s)\rVert^2+\lVert \bar\q(s)\rVert_V^2\ ds\right).
    \end{equation}
    Now to prove statement (i) we use \eqref{eq:defR_T} and \eqref{eq:int nabla q bar} to derive
    \begin{align*}
        \E \left[\lVert \q-\bar\q\rVert_{C([0,T];\mathbf{H}^{-\alpha})}^2\right]\lesssim_{\alpha,M,\epsilon,T}& \kappa^{\epsilon}\lVert \theta\rVert_{\ell^{\infty}}^{2(\alpha-\epsilon)}R_T^2\exp\left(T\frac{\nu^2+\beta^2+r^2}{\kappa+\nu}\right)\\ & \exp\left(\frac{CTR_T^2}{(\kappa+\nu)^2}\left(1+\kappa+\nu\right)+\frac{C}{\left(\kappa+\nu\right)^2}\int_0^T\lVert F(s)\rVert^2\ ds\right).
    \end{align*}
    
    We move on to proving statement (ii). Given \eqref{eq:E Growall estimate} let us now estimate $\int_0^T\| \q(s)\|^2$ using the estimate \eqref{eq:int nabla q} instead of \eqref{eq:defR_T}, namely
    \begin{align*}
        \int_0^T\lVert \q (s)\rVert^2 ds\lesssim \int_0^T\lVert \q (s)\rVert_V^2 ds\lesssim  \frac{C}{\nu} \left(TR_T^2+\int_0^T\lVert F(s)\rVert^2\ ds\right)
    \end{align*}
    where we have also used that $\frac{1}{\nu}+\frac{1}{\kappa+\nu}\leq \frac{2}{\nu}$. Then the desired bound follows from the same arguments used for the estimate (i).
\end{proof}
\begin{remark}
    Similarly to \cite{flandoli2021quantitative}, we can consider also the $p$ moment of the random variables treated in the previous theorem. We neglect this fact, which is not needed in order to prove Theorem~\ref{thm:convergence deterministic}.
\end{remark}

\subsection{Long Time Behavior}\label{sec long time}
%
Let us recall our framework, $\q$ is the weak solution of the system  
\begin{align*}
    \begin{cases}
    dq_1 + \left(\nabla^{\perp}\psi_1\cdot \nabla q_1\right) dt = \left(\kappa\Delta q_1+\nu\Delta^2 \psi_1 - \beta \partial_x \psi_1 + F \right)\, dt + \sqrt{2\kappa}\sum_{k=1}^K\bm{a}_{1,k}e_k\cdot \nabla q_1\,  dW^{1,k}\\
    dq_2 + \left(\nabla^{\perp}\psi_2\cdot \nabla q_2\right) dt = \left(\kappa\Delta q_2 + \nu\Delta^2 \psi_2 -\beta \partial_x \psi_2 - r\Delta \psi_2\right)\, dt + \sqrt{2\kappa}\sum_{k=1}^K\bm{a}_{2,k}e_k\cdot \nabla q_2 \, dW^2 \\
\bfpsi=-(-\mathbf{\Delta}-M)^{-1}\q\\
\q(0)=\q_0
\end{cases}
\end{align*}
and $\tilde{\q}$ is a weak solution of the stationary deterministic system
\begin{align*}
    \begin{cases}
        \nabla^{\perp}\tilde{\psi}_1\cdot \nabla \tilde{q}_1 = \kappa\Delta \tilde{q}_1+\nu\Delta^2 \tilde{\psi}_1 - \beta \partial_x \tilde{\psi}_1 + F\\
    \nabla^{\perp}\tilde{\psi}_2\cdot \nabla \tilde{q}_2 = \kappa\Delta \tilde{q}_2+\nu\Delta^2 \tilde{\psi}_2 -\beta \partial_x \tilde{\psi}_2 - r\Delta \tilde{\psi}_2\\
    \tilde{\bfpsi}=-(-\mathbf{\Delta}-M)^{-1}\tilde{\q}.
    \end{cases}
\end{align*}
Thanks to the relation between $\tilde{\q}$ and $\tilde{\bfpsi}$, the stationary system can be rewritten as 
\begin{align}\label{stationary}
\begin{cases}
    0=(\kappa+\nu)\mathbf{\Delta} \tilde{\q}+\nu\mathbf{\Delta}M(-\mathbf{\Delta}-M)^{-1}\tilde{\q}-\nabla^{\perp}\tilde{\bfpsi}\cdot\nabla\tilde{\q}-\beta\partial_x\tilde{\bfpsi}+\Tilde{\mathbf{F}}\\
    \tilde{\bfpsi}=-(-\mathbf{\Delta}-M)^{-1}\tilde{\q},\\  
\end{cases}
\end{align}
where 
$$\Tilde{\mathbf{F}}=\begin{bmatrix}
    F\\ -r\Delta\tilde{\psi}_2
    \end{bmatrix}.$$
    Then we consider the following concept of solution for \eqref{stationary}:
\begin{definition}\label{weak stationary solution}
    A weak solution of the problem (\ref{stationary}) is a function $\tilde{\q}\in  \cV$ such that $\forall \bm{\phi}=(\phi_1,\phi_2)^t \in  \cV$ the following relation holds true
    \begin{align*}
        (\kappa+\nu)\langle \nabla \tilde{\q},\nabla\bm{\phi}\rangle=\nu\langle\mathbf{\Delta} M(-\mathbf{\Delta}-M)^{-1}\tilde{\q},\bm{\phi}\rangle-\langle \nabla^{\perp}\tilde{\bm{\psi}}\cdot\nabla\tilde{\q}, \bm{\phi}\rangle-\beta\langle \partial_x \tilde{\bm{\psi}},\bm{\phi}\rangle+\langle \tilde{\mathbf{F}},\bm{\phi}\rangle.
    \end{align*}
\end{definition}    
\begin{proposition}\label{lemma convergence}
    For $\kappa$ large enough there exists a unique $\tilde{\q}$ weak solution of problem (\ref{stationary}), moreover $\lVert\Tilde{\q}-\Bar\q(t)\rVert\rightarrow 0$ exponentially fast as $t\rightarrow+\infty$.
\end{proposition}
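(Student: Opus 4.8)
I would establish the two assertions separately, in both cases working with the equivalent weighted norm $\vertiii{\cdot}$ to exploit the symmetry created by the weights $h_1,h_2$; the conclusion in the plain $\mathbf{L}^2$ norm then follows by equivalence. The common mechanism is that the enhanced dissipation $(\kappa+\nu)\mathbf{\Delta}$ is made to dominate every other term once $\kappa$ is large. For existence I would run a Galerkin scheme for the weak formulation of Definition~\ref{weak stationary solution} and obtain a uniform $\cV$ bound by testing against the approximation itself. Here the antisymmetry of the transport term gives $\langle\nabla^\perp\tilde\bfpsi\cdot\nabla\tq,\tq\rangle=0$, and the friction term, rewritten through \eqref{eq:relation_q_psi}, contributes a favourable $-rh_2\lVert\tilde q_2\rVert^2$; the remaining linear terms are controlled by the continuity of $(-\mathbf{\Delta}-M)^{-1}$, Young's inequality and the Poincar\'e inequality, absorbing a fraction of $(\kappa+\nu)\vertiii{\nabla\tq}^2$ and yielding $\lVert\tq\rVert_{\cV}\lesssim\lVert F\rVert/(\kappa+\nu)$. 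Brouwer's fixed point theorem then produces finite dimensional solutions and the bound lets me pass to the limit. For uniqueness I would subtract two solutions, test the equation for their difference $\bm{\zeta}$ against $\bm{\zeta}$, cancel $\langle\nabla^\perp\tilde\bfpsi\cdot\nabla\bm{\zeta},\bm{\zeta}\rangle$ again by antisymmetry, and bound the surviving term $\langle\nabla^\perp\bm{\eta}\cdot\nabla\tq',\bm{\zeta}\rangle$ (with $\tq'$ one of the two solutions and $\bm{\eta}$ the difference of streamfunctions) by $C\lVert\tq'\rVert_{\cV}\vertiii{\bm{\zeta}}\vertiii{\nabla\bm{\zeta}}$, using Ladyzhenskaya's inequality and the two-derivative gain of $(-\mathbf{\Delta}-M)^{-1}$; for $\kappa$ large the dissipation dominates and forces $\bm{\zeta}=0$.

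\textbf{Exponential convergence.} Writing $\bm{\zeta}(t)=\bar\q(t)-\tq$ and $\bm{\eta}(t)=\bar\bfpsi(t)-\tilde\bfpsi$ and subtracting the stationary system \eqref{stationary} from \eqref{compact galeati deterministic}, the two forcings $F$ cancel and the friction terms combine, so that
\[
\frac{d\bm{\zeta}}{dt}=(\kappa+\nu)\mathbf{\Delta}\bm{\zeta}+\nu\mathbf{\Delta}M(-\mathbf{\Delta}-M)^{-1}\bm{\zeta}-\bigl(\nabla^\perp\bar\bfpsi\cdot\nabla\bar\q-\nabla^\perp\tilde\bfpsi\cdot\nabla\tq\bigr)-\beta\partial_x\bm{\eta}+\begin{bmatrix}0\\-r\Delta\eta_2\end{bmatrix}.
\]
A standard energy computation for $\vertiii{\bm{\zeta}}^2$ produces the dissipation $-(\kappa+\nu)\vertiii{\nabla\bm{\zeta}}^2$. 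I would split the convective difference as $\nabla^\perp\bar\bfpsi\cdot\nabla\bm{\zeta}+\nabla^\perp\bm{\eta}\cdot\nabla\tq$, so that the first piece drops out against $\bm{\zeta}$ and only $\langle\nabla^\perp\bm{\eta}\cdot\nabla\tq,\bm{\zeta}\rangle$ remains; by Ladyzhenskaya and Young this is bounded by $\tfrac{\kappa+\nu}{2}\vertiii{\nabla\bm{\zeta}}^2+\tfrac{C\lVert\tq\rVert_{\cV}^2}{\kappa+\nu}\vertiii{\bm{\zeta}}^2$. The linear terms, including the friction term which again gives a favourable $-rh_2\lVert\zeta_2\rVert^2$, are bounded by $C(\nu+\beta+r)\vertiii{\bm{\zeta}}^2$ with $C$ independent of $\kappa$. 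Invoking the Poincar\'e inequality $\vertiii{\nabla\bm{\zeta}}^2\geq\lambda_1\vertiii{\bm{\zeta}}^2$, I obtain
\[
\frac{d}{dt}\vertiii{\bm{\zeta}}^2\leq\Bigl(-(\kappa+\nu)\lambda_1+C(\nu+\beta+r)+\tfrac{C\lVert\tq\rVert_{\cV}^2}{\kappa+\nu}\Bigr)\vertiii{\bm{\zeta}}^2,
\]
and since $\lVert\tq\rVert_{\cV}\lesssim\lVert F\rVert/(\kappa+\nu)$ the bracket is $\leq-2\gamma<0$ for $\kappa$ large enough. Gronwall's lemma then gives $\vertiii{\bm{\zeta}(t)}^2\leq e^{-2\gamma t}\vertiii{\bm{\zeta}(0)}^2$, which is the claimed exponential convergence.

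\textbf{Main obstacle.} The delicate step will be the nonlinear term: estimating the convective difference while keeping $\nabla\bar\q(t)$ would only produce the time-dependent, merely integrable factor $\lVert\nabla\bar\q(t)\rVert^2$ and hence no clean rate. The trick of rewriting it so that the surviving contribution carries the \emph{stationary} gradient $\nabla\tq$ — whose norm is a fixed, $O(1/(\kappa+\nu))$ constant rather than a fluctuating quantity — is what reduces the problem to a linear Gronwall inequality with constant coefficient. This comes at the price of requiring $\kappa$ large enough that the enhanced dissipation overcomes both the fixed linear terms and the small nonlinear remainder, which is exactly the role played by the hypothesis ``$\kappa$ large enough'' in the statement.
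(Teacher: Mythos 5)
Your proposal is correct, and it splits into a part that coincides with the paper and a part that takes a genuinely different route. The exponential convergence is proved exactly as in the paper: the same decomposition of the convective difference so that $\nabla^{\perp}\bar{\bfpsi}\cdot\nabla\bm{\zeta}$ dies against $\bm{\zeta}$ and only $\langle\nabla^{\perp}\bm{\eta}\cdot\nabla\tq,\bm{\zeta}\rangle$ survives, carrying the stationary gradient whose norm is $O(\lVert F\rVert/(\kappa+\nu))$ by the a priori bound, followed by Ladyzhenskaya--Young absorption, Poincar\'e and Gronwall. For existence and uniqueness, however, the paper does not use Galerkin--Brouwer: it runs a Banach contraction on the ball $B_{\kappa}\subset\cV$ of radius $M_{\kappa}\simeq\lVert F\rVert/(\kappa+\nu)$, where $T(\q)$ is the solution of the linear problem in which the transporting field $\nabla^{\perp}\bm{\psi}$ is frozen (computed from $\q$ via \eqref{eq:relation_q_psi}) while the unknown stays inside the transport term; Lax--Milgram applies because the frozen transport term is antisymmetric, so the form $(\kappa+\nu)\langle\nabla\cdot,\nabla\cdot\rangle-\langle\nabla^{\perp}\bm{\psi}\cdot\nabla\cdot,\cdot\rangle$ remains coercive, and self-mapping plus contraction hold for $\kappa$ large. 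That buys existence and uniqueness in one stroke (uniqueness in $B_{\kappa}$, and the a priori estimate forces every weak solution into $B_{\kappa}$) with no compactness argument. Your route --- Brouwer for the finite-dimensional problems, uniform $\cV$ bound, passage to the limit, separate uniqueness estimate --- is the classical stationary Navier--Stokes scheme: slightly longer, but more robust, and your uniqueness estimate is of exactly the same nature as the paper's contraction estimate. The one step you should spell out when writing it up is the compactness used to pass to the limit in the nonlinearity: weak $\cV$ convergence plus Rellich gives strong convergence of $\q^{N}$ in $\cH$ and of $\nabla^{\perp}\bm{\psi}^{N}$ in $L^{4}$, which suffices; the paper's fixed-point route avoids this issue entirely.
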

\begin{proof}
    As usual we start by establishing a priori estimates. 
    Assuming there exist a solution $\tilde\q$. Taking $\tilde\q$ as a test function in the weak formulation, we obtain, thanks to the fact that $\langle \tilde{\bm\psi}\cdot\nabla\tilde \q, \tilde{\q}\rangle=0$ and to Poincaré inequality,
    \begin{align*}
        (\kappa+\nu)\lVert \nabla\tilde \q\rVert^2\leq C\nu \lVert \nabla\tilde \q\rVert^2+C\lVert F\rVert \lVert \nabla\tilde \q\rVert+C \lVert \nabla\tilde \q\rVert^2
    \end{align*}
    where the constant $C$ depends from $\beta, S_1,\ S_2, r, \mathcal{D}$ but it is independent of $\kappa$ and $\nu$. Thus, if $\kappa$ is large enough we have
    \begin{align}\label{apriori}
        \lVert \nabla\tilde \q\rVert\leq \frac{C\lVert F\rVert}{\kappa+\nu-C\nu-C}=M_{\kappa}.
    \end{align}
    %
    For $\kappa$ large enough such that the a priori estimate \ref{apriori} holds, let us consider the complete metric space 
    $$ B_{\kappa}=\{\q\in  \cV: \lVert \q\rVert_V\leq M_{\kappa}\}.$$ 
    Let us show that the map $T$ which takes $\q \in B_{\kappa}$ and associate to it $\tilde{\q}$ which is the unique weak solution of the linear problem
    \begin{align*}
        (\kappa+\nu)\langle \nabla \tilde{\q},\nabla\bm{\phi}\rangle=\nu\langle\mathbf{\Delta} M(-\mathbf{\Delta}-M)^{-1}{\q},\bm{\phi}\rangle-\langle \nabla^{\perp}{\bm{\psi}}\cdot\nabla\tilde{\q}, \bm{\phi}\rangle-\beta\langle \partial_x {\bm{\psi}},\bm{\phi}\rangle+\langle {\mathbf{F}},\bm{\phi}\rangle.
    \end{align*}
    is a contraction in $B_{\kappa}$ if we take $\kappa$ large enough. Here, of course $\bm{\psi}$ is obtained by $\q$ via relation (\ref{eq:relation_q_psi}) and $${\mathbf{F}}=\begin{bmatrix}
        F\\ -r\Delta{\psi}_2
        \end{bmatrix}.$$ Existence and uniqueness of the solution of previous problem follows immediately by Lax-Milgram Lemma thanks to the fact that the bilinear form $a: \cV\times  \cV\rightarrow \mathbb{R}$ defined by  $$a(\q_1,\q_2)=(\kappa+\nu)\langle \nabla \q_1,\nabla \q_2\rangle-\langle \nabla^{\perp}{\bm\psi}\cdot\nabla \q_1, \q_2\rangle$$ is continuous and coercive. Moreover, arguing as in the a priori estimate, $T(\q)$ satisfies \begin{align*}
        (\kappa+\nu)\lVert \nabla T(\q)\rVert\leq C\nu M_{\kappa}+CM_{\kappa}+C\lVert  F\rVert
    \end{align*}
    where $C$ is the same costant as above. From this it follows immediately that
    \begin{align*}
        \lVert \nabla T(\q)\rVert\leq M_{\kappa}.
    \end{align*}
    Thus $T$ is a map between $B_{\kappa}$ and itself for $\kappa$ large enough such that $M_{\kappa}>0$. Lastly we need to show that $T$ is a contraction. Let $\q_1,\q_2\in B_{\kappa}$ and $T(\q_1)$, $T(\q_2)$ the corresponding solutions. Then $\forall \bm{\phi} \in  \cV$ we have
    \begin{align*}
        (\kappa+\nu)\langle \nabla (T(\q_1)-T(\q_2)),\nabla\bm\phi\rangle&=\nu\langle\mathbf{\Delta} M(-\mathbf{\Delta}-M)^{-1}(\q_1-\q_2),\bm\phi\rangle-\langle \nabla^{\perp}\bm\psi_1\cdot\nabla T(\q_1), \bm\phi\rangle\\ &+\langle \nabla^{\perp}\bm\psi_2\cdot\nabla T(\q_2), \bm\phi\rangle-\beta\langle \partial_x (\bm\psi_1-\bm\psi_2),\bm\phi\rangle+\langle {\bm F}_1-{\bm F}_2,\bm \phi\rangle.
    \end{align*}
    Here, of course $\bm{\psi}_{1,2}$ is obtained by $\q_{1,2}$ via relation (\ref{eq:relation_q_psi}) and $${\mathbf{F}_{1,2}}=\begin{bmatrix}
        F\\ -r(\Delta{\bm\psi}_{1,2})_2
        \end{bmatrix}.$$
    Taking $\bm\phi=T(\q_1)-T(\q_2)$ we have 
    \begin{align*}
      (\kappa+\nu)\lVert \nabla ( T(\q_1)-T(\q_2))\rVert^2& \leq C\nu \lVert \nabla  (T(\q_1)-T(\q_2))\rVert \lVert \nabla  (\q_1-\q_2)\rVert +C\lVert \nabla  (T(\q_1)-T(\q_2))\rVert \lVert \nabla  (\q_1-\q_2)\rVert\\ & -\langle \nabla^{\perp}\bm\psi_1\cdot\nabla T(\q_1), T(\q_1)-T(\q_2)\rangle+\langle \nabla^{\perp}\bm\psi_2\cdot\nabla T(\q_2), T(\q_1)-T(\q_2)\rangle\\ & \pm  \langle \nabla^{\perp}\bm\psi_1\cdot\nabla T(\q_2), T(\q_1)-T(\q_2)\rangle\\ & \leq C\nu \lVert \nabla  (T(\q_1)-T(\q_2))\rVert \lVert \nabla  (\q_1-\q_2)\rVert +C\lVert \nabla  (T(\q_1)-T(\q_2))\rVert \lVert \nabla  (\q_1-\q_2)\rVert\\ &+C M_{\kappa}\lVert \nabla  (T(\q_1)-T(\q_2))\rVert \lVert \nabla  (\q_1-\q_2)\rVert.
    \end{align*}
    If we take $\kappa$ large enough, thus $T$ is a contraction and the thesis follows.
    %
    
    We conclude by showing the desired exponential convergence. Let $\bar{\bfpsi}$ and $\tilde{\bfpsi}$ be the stream functions associated to $\bar{\q}$ and $\tilde{\q}$ respectively and define $\bfw =\bar\q-\tilde\q$ and $\bm{\chi}=\bar\bfpsi-\tilde\bfpsi$. 
    The following differential relation holds 
    \begin{equation*}
         \frac{d\lVert \bfw \rVert^2}{2dt}+(\kappa+\nu)\lVert \nabla \bfw \rVert^2
         =\langle \nu \Delta M(-\Delta-M)^{-1}\bfw ,\bfw \rangle-\beta\langle \partial_x \bm{\chi},\bfw \rangle+\langle \Bar{\textbf{F}}-\tilde{\textbf{F}},\bfw \rangle -\langle\nabla^{\perp}\bm{\chi}\cdot\nabla  \tilde\q,\bfw \rangle.
    \end{equation*}
    %
    %
    Using the definition of $\bar{\bm{F}}$ and $\tilde{\bm{F}}$, \autoref{preliminary non linear} and Young's inequality we have 
    \begin{align*}
       \frac{d\lVert \bfw \rVert^2}{2dt}+(\kappa+\nu)\lVert \nabla \bfw \rVert^2 & \leq \nu C\lVert \bfw \rVert^2+C\lVert \bfw\rVert^2+C\lVert \bfw\rVert\lVert \nabla \bfw\rVert\lVert \nabla \tilde{\q}\rVert\\ & 
        \leq \nu C\lVert \bfw \rVert^2+C\lVert \bfw \rVert^2+\frac{\nu}{2}\lVert \nabla \bfw \rVert^2+\frac{C}{2\nu}\lVert \bfw \rVert^2\lVert \nabla \tilde{\q}\rVert^2,
    \end{align*}
    where $C$ is a constant possibly depending from $\mathcal{D}$, $\beta,\ r,\ M$ but it is independent of $\kappa$, $\nu$ and $T$. By Poincaré inequality, we have $\lVert \nabla \bfw \rVert^2\geq \frac{1}{C_p}\lVert \bfw \rVert^2$, therefore \begin{align*}
        \frac{d \lVert \bfw \rVert^2}{2 dt}+\left(\frac{\kappa+\nu/2}{C_p}-(\nu C+C+C\lVert \nabla\tilde \q\rVert^2)\right)\lVert \bm{w}\rVert^2\leq 0.
    \end{align*}
    Calling $\alpha=\frac{\kappa+\nu/2}{C_p}-(\nu C+C+C\lVert \nabla\tilde \q\rVert^2)$. If $\kappa$ is large enough, $\alpha>0$ thus by Gronwall's Lemma we have the exponential rate of convergence.
\end{proof}

\begin{remark}
\begin{enumerate}
    \item By the proof of previous theorem, $\kappa$ must be large enough that the following inequalities are satisfied, here $C$ is a constant possibly depending from $\beta, S_1, S_2, r, \mathcal{D}$ but independent of $\kappa$ and $\nu$.
    \begin{align*}
       & \kappa+\nu-C\nu-C>0\\
       &\frac{1}{\kappa+\nu}\left(C\nu+C+\frac{C\lVert F\rVert}{\kappa+\nu-C\nu-C}\right)<1\\
       & \frac{\kappa+\nu/2}{C_p}-\left(\nu C+C+\frac{C\lVert F\rVert^2}{\left(\kappa+\nu-C\nu-C\right)^2}\right)>0.
    \end{align*}
    \item By elliptic regularity, it follows immediately that, actually, $\tilde\q\in D(\mathbf{\Delta}).$
\end{enumerate}
\end{remark}

Now we are able to prove our final result.
\begin{proof}[Proof of Theorem~\ref{thm:convergence deterministic}]
    Let $\bar{\q}$ be the weak solution of the stationary deterministic problem (\ref{eq:QG_deterministic GL}). First we fix $\delta>0,\ \alpha\in (0,1)$. If $\kappa$ is large enough, by \autoref{lemma convergence}, we can find $\Bar{T}=\Bar{T}(\delta)$ such that
    \begin{align*}
        \lVert \Bar{\q}(t)-\Tilde{\q}\rVert^2\leq \delta/4,\ \fa t\geq \Bar{T}.
    \end{align*}
    Now we use the results of Theorem~\ref{thm galeati} for $\epsilon=\alpha/2$, thus we have
    \begin{align*}
        \E \left[\lVert \q-\bar\q\rVert_{C([0,2\bar{T}];\mathbf{H}^{-\alpha})}^2\right] \lesssim_{\alpha,M,\epsilon,\bar{T}}
        & \kappa^{\epsilon}\lVert \theta\rVert_{\ell^{\infty}}^{2(\alpha-\epsilon)}R_{2\bar{T}}^2 \exp\left(2\bar{T}\frac{\nu^2+\beta^2+r^2}{\kappa+\nu}\right)\\
        & \exp\left(\frac{C\bar{T}R_{2\bar{T}}^2}{(\kappa+\nu)^2}\left(1+\kappa+\nu\right)+\frac{C}{\left(\kappa+\nu\right)^2}\int_0^{2\bar{T}}\lVert F(s)\rVert^2\ ds\right).
    \end{align*}
    Since the constants appearing in previous equation are independent of the parameters of the noise, if we take $\theta$ to be such that the right hand side of the previous inequality can be bounded by $\delta/4$ then the thesis follows immediately. For example some possible choices of $\theta$ can be found in \cite[Example~1.3]{flandoli2021quantitative}.
\end{proof}

\paragraph{Acknowledgments} 
The authors are grateful to Franco Flandoli and Valerio Lucarini for
criticisms, comments, suggestions, and encouragement. In particular, GC would like to thank professor Flandoli for hosting and financially contributing to her research visit at Scuola Normale di Pisa in May 2022, where this work started.

\paragraph{} Dipartimento di Ingegneria e Scienze dell’Informazione e Matematica, Universit\`{a} degli Studi dell’Aquila, 67100 L’Aquila, Italy\\
Centre for the Mathematics of Planet Earth, University of Reading, United Kingdom\\
\textit{Email:} giulia.carigi@univaq.it

\paragraph{} Scuola Normale Superiore, Piazza dei Cavalieri,7, 56126 Pisa, Italy \\ \textit{Email:} eliseo.luongo@sns.it

\end{document}